\begin{document}

\newtheorem{thm}{Theorem}[section]
\newtheorem{lem}[thm]{Lemma}
\newtheorem{cor}[thm]{Corollary}
\newtheorem{add}[thm]{Addendum}
\newtheorem{prop}[thm]{Proposition}
\theoremstyle{definition}
\newtheorem{defn}[thm]{Definition}
\theoremstyle{remark}
\newtheorem{rmk}[thm]{Remark}

\newcommand{\CC}{\mathbb{C}}
\newcommand{\RR}{\mathbb{R}}
\newcommand{\Z}{\mathbb{Z}}
\newcommand{\QQ}{\mathbb{Q}}
\newcommand{\NN}{\mathbb{N}}
\newcommand{\CmodTwoPiIZ}{{\mathbf C}/2\pi i {\mathbf Z}}
\newcommand{\Cnozero}{{\mathbf C}\backslash \{0\}}
\newcommand{\Cinfty}{{\mathbf C}_{\infty}}
\newcommand{\RPnminustwo}{\mathbb{RP}^{n-2}}

\newcommand{\SLtwoC}{\mathrm{SL}(2,{\mathbb C})}
\newcommand{\SLtwoR}{\mathrm{SL}(2,{\mathbb R})}
\newcommand{\PSLtwoC}{\mathrm{PSL}(2,{\mathbb C})}
\newcommand{\PSLtwoR}{\mathrm{PSL}(2,{\mathbb R})}
\newcommand{\SLtwoZ}{\mathrm{SL}(2,{\mathbb Z})}
\newcommand{\PSLtwoZ}{\mathrm{PSL}(2,{\mathbb Z})}

\newcommand{\A}{{\mathcal A}}
\newcommand{\B}{{\mathcal B}}
\newcommand{\C}{{\mathcal X}}
\newcommand{\D}{{\mathcal D}}
\newcommand{\E}{{\mathcal E}}

\newcommand{\MCG}{\mathcal{MCG}}
\newcommand{\HH}{H^2}
\newcommand{\HHH}{H^3}
\newcommand{\tr}{{\hbox{tr}\,}}
\newcommand{\Hom}{\mathrm{Hom}}
\newcommand{\SL}{\mathrm{SL}}
\newcommand{\BQ}{\rm{BQ}}
\newcommand{\Id}{\rm{Id}}

\newcommand{\setn}{{[n]}}
\newcommand{\powern}{{P(n)}}
\newcommand{\nck}{{C(n,k)}}

\newcommand{\hatI}{{\hat{I}}}
\newcommand{\TkDelta}{{T^{|k|}(\Delta)}}
\newcommand{\vecDelta}{{\vec{\Delta}_{\phi}}}

\newcommand{\Tabstwo}{{T^{|2|}(\Delta)}}
\newcommand{\Tnminusone}{{T^{|n-1|}(\Delta)}}
\newcommand{\Hur}{{\mathcal{H}}}

\newenvironment{pf}{\noindent {\it Proof.}\quad}{\square \vskip 10pt}

\title[Automorphisms preserving the Markoff-Hurwitz polynomial]{Polynomial automorphisms of $\mathbb{C}^n$ preserving the Markoff-Hurwitz polynomial}
\author[H. Hu, S.P. Tan, and Y. Zhang]{Hengnan Hu, Ser Peow Tan, and Ying Zhang}
\address{Department of Mathematics \\ National University of Singapore \\ Singapore 119076} \email{huhengnan@gmail.com}
\address{Department of Mathematics \\ National University of Singapore \\ Singapore 119076} \email{mattansp@nus.edu.sg}
\address{School of Mathematical Sciences \\ Soochow University \\ Suzhou 215006 \\ China} \email{yzhang@suda.edu.cn}

\subjclass[2000]{57M05; 32G15; 30F60; 20H10; 37F30}

\keywords{Markoff-Hurwitz polynomial, Cayley graph, polynomial automorphisms,
identities}
\thanks{Hu and Tan are partially supported by the National University of Singapore academic research grant R-146-000-186-112. Zhang is supported by NSFC (China) grant no. 11271276 and Ph.D. Programs Foundation (China) grant no. 20133201110001.
}

\dedicatory{To Professor William Goldman on the occasion of his sixtieth birthday}

%
%

 \begin{abstract}
 We study the action of the group of polynomial automorphisms of $\CC^n$ ($n \ge 3$) which preserve the Markoff-Hurwitz polynomial
 $$ H(\mathbfit{x}):=x_1^2+x_2^2+\cdots +x_n^2- x_1 x_2 \cdots x_n. $$
 Our main results include the determination of the group, the description of a non-empty open subset of $\CC^n$ on which
 the group acts properly discontinuously (domain of discontinuity), and identities for the orbit of points in the domain of discontinuity.
 \end{abstract}

 \maketitle
 \tableofcontents

 \vspace{10pt}


 \section{Introduction}\label{s:intro}

 The Markoff equation
 \begin{equation}\label{eqn:Markoff}
 x_1^2+x_2^2+x_3^2-x_1x_2x_3=0
 \end{equation}
 occurs in various settings and has been well studied by many authors in different contexts (actually Markoff studied the equation $a^2+b^2+c^2=3abc$, the triples of positive integer solutions are called Markoff triples, but this is equivalent to (\ref{eqn:Markoff}) by the change of variables $x_1=3a, x_2=3b, x_3=3c$). As a diophantine equation, the triples of positive integer solutions have interpretations in terms of diophantine approximations, minima of binary quadratic forms and traces of simple closed geodesics on the modular torus, see \cite{series1985mi} for an excellent survey. A basic theorem of Markoff states that the set of (non-trivial) integer solutions of (\ref{eqn:Markoff}) can be obtained from the fundamental solution $(3,3,3)$ by considering its orbit under the action of the group $\Gamma_3^*$ of polynomial automorphisms of (\ref{eqn:Markoff}). Here $\Gamma_3^*$ is generated by the permutations, the even sign change automorphisms (where two of the variables change signs) and the involution $b_1:\CC^3 \rightarrow \CC^3$ given by
 $$ b_1(x_1,x_2,x_3)=(x_2x_3-x_1,x_2, x_3). $$

 More generally, $\CC^3$ parametrizes
 $$ {\rm Hom}(F_2, \SLtwoC)/\!/\SLtwoC, $$
 the  character variety of the free group on two generators $F_2=\langle A,B \rangle$ into $\SLtwoC $, where the parameters are $x_1=\tr A$, $x_2=\tr B$ and $x_3=\tr (AB)$. The variety described by (\ref{eqn:Markoff}) parametrizes the relative character variety
 $$ \{[\rho] \in {\rm Hom}(F_2, \SLtwoC)/\!/\SLtwoC ~|~\tr (ABA^{-1}B^{-1})=-2\}, $$
 see \cite{bowditch1998plms,goldman2003gt, tan-wong-zhang2006gd,tan-wong-zhang2008advm}. In this context,  $\Gamma_3^*$ is commensurable to the action of $\rm{Out}(F_2)$ on the (relative) character variety, and the basic theorem of Markoff can be interpreted as saying that the set of positive Markoff triples is the orbit of the root triple $(3,3,3)$ under the action of  $\rm{Out}(F_2)$ (Cohn \cite{Cohn} was the first to notice the connection between the Markoff triples and traces of simple geodesics on the modular torus).

 Various subsets of the character variety have interpretations as Fricke spaces of geometric structures (for example the real points on (\ref{eqn:Markoff}) except the origin correspond to hyperbolic structures on a once-punctured torus), and $\rm{Out}(F_2)$ acts properly discontinuously on these subsets. On the other hand, other subsets have interpretations as representations of $F_2$ into $\rm{SU}(2)$ and results of Goldman \cite{goldman1997annals, goldman2003gt} imply that $\rm{Out}(F_2)$ acts ergodically on these subsets. Hence, the overall action of $\Gamma_3^*$ on $\CC^3$ is dynamically interesting and quite mysterious.

 \medskip

 Hurwitz \cite{hurwitz1907} generalized the study to the $n$-variable diophantine equation
 \begin{equation}\label{eqn:Markoff-Hurwitz1}
 x_1^2+x_2^2+\cdots +x_n^2-k x_1x_2\cdots x_n=0,
 \end{equation}
 (where $k \in \mathbb Z$), and obtained analogous results to those for the Markoff equation. In particular, he showed that there are a finite set of basic solutions of (\ref{eqn:Markoff-Hurwitz1}) such that all integral  solutions  can be obtained by considering the orbits of these basic solutions under a certain group action. By a simple change of variables by a homothety $\mathbfit{x} \mapsto \lambda \mathbfit{x}$, one may assume that $k=1$ in the above, and consider the Markoff-Hurwitz polynomial:
 \begin{equation}\label{eqn:Markoff-Hurwitz}
 H(x_1, x_2 \cdots, x_n):=x_1^2+x_2^2+\cdots +x_n^2-x_1x_2 \cdots x_n
 \end{equation}
 ($n=3$ corresponds to the Markoff polynomial on the left hand side of (\ref{eqn:Markoff})).

 In this context, it is interesting to determine the group of polynomial automorphisms of $\CC^n$ which preserves (\ref{eqn:Markoff-Hurwitz}), and more generally to study the dynamics of this group action on $\CC^n$. We state here three general questions which we address in this paper:

 Question 1. What is  the group of polynomial automorphisms of $\CC^n$ preserving the Markoff-Hurwitz polynomial (\ref{eqn:Markoff-Hurwitz})?

 Question 2. How can we describe invariant open subsets of $\CC^n$  on which this group acts properly discontinuously (domains of discontinuity)? These sets would be analogues of Teichm\"uller or Fricke spaces, and the quotient under the group action the analogues of various moduli spaces.

 Question 3. What can we say about the orbits of points in the domain of discontinuity, in particular, what is their growth rate and do they satisfy some  identities? This question is motivated by results of McShane \cite{mcshane1991thesis} who proved a remarkable identity for once-punctured hyperbolic tori which can be interpreted as an identity satisfied by the Markoff triples, and more generally, triples of real numbers satisfying (\ref{eqn:Markoff}). This was generalized by Bowditch \cite{bowditch1998plms}, Tan-Wong-Zhang \cite{tan-wong-zhang2008advm} and Hu-Tan-Zhang \cite{hu-tan-zhang2014mrl} in the context of the group action on $\CC^3$ preserving (\ref{eqn:Markoff}). The natural question here is whether these identities generalize for $\CC^n$,  $n \ge 4$.

 The main purpose of this paper is to give answers to the above questions.
 For Question 1, we have the following (compare to \cite{horowitz1975tams} which considers the case $n=3$):

 \begin{thm}\label{thm:polyauto}
 The group of polynomial automorphisms of $\CC^n$ preserving the Markoff-Hurwitz polynomial
 \begin{equation}\label{eqn:Hurwitz}
 H(\mathbfit{x})=H(x_1, x_2,\cdots, x_n):=x_1^2+x_2^2+\cdots +x_n^2-x_1x_2 \cdots x_n
 \end{equation}
 is given by $$\Gamma_n^* =\Gamma_n \rtimes \Lambda$$
 where
 \begin{enumerate}
 \item $\Lambda$ is the group of linear automorphisms of $(\CC^n, H(\mathbfit{x}))$ and is given by  $\Upsilon_n \rtimes S_n$ where $\Upsilon_n$ is the group of even sign change automorphisms and  $S_n$ the symmetric group acting by permutations on the coordinates; and
 \item  $\Gamma_n=\langle\, b_1, \cdots, b_n ~| ~b_i^2={\Id},i=1,\cdots,n\, \rangle$ where $b_i$, $i=1, \cdots, n$ is an involution of $\CC^n$ which fixes $x_j$ for $j \neq i$ and replaces $x_i$ with the product of the other coordinates subtract $x_i$, that is,
 \[
 b_i (x_1, \cdots, x_i,\cdots, x_n) := \Big( x_1,\cdots,x_{i-1}, \prod_{j \neq i}x_j-x_i, x_{i+1},\cdots,x_n \Big).
 \]
 \end{enumerate}
 In particular, $\Gamma_n$ is a normal subgroup of finite index in $\Gamma_n^*$.
 \end{thm}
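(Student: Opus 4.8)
The plan is to prove the two inclusions $\Gamma_n^*\subseteq\mathrm{Aut}$ and $\mathrm{Aut}\subseteq\Gamma_n^*$, where $\mathrm{Aut}$ denotes the group in question. The first is a direct check: each $b_i$ is a \emph{Vieta involution}, obtained by viewing $H(\mathbfit{x})$ as the monic quadratic $x_i^2-\bigl(\prod_{j\ne i}x_j\bigr)x_i+\sum_{j\ne i}x_j^2$ in $x_i$ with the remaining coordinates frozen; the two roots sum to $\prod_{j\ne i}x_j$ and have product $\sum_{j\ne i}x_j^2$, so interchanging them fixes $H$, giving $b_i\in\mathrm{Aut}$, while permutations and even sign changes visibly fix $H$. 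A short conjugation computation (using $\prod_j\varepsilon_j=1$ for an even sign change $\varepsilon$) shows $\varepsilon b_i\varepsilon^{-1}=b_i$ and $\sigma b_i\sigma^{-1}=b_{\sigma(i)}$, so $\Lambda$ normalises $\Gamma_n$; combined with $\Gamma_n\cap\Lambda=\{\Id\}$ (which drops out of the degree estimate below, as a nontrivial reduced word in the $b_i$ raises degrees and so is never linear), this realises $\Gamma_n\rtimes\Lambda$ inside $\mathrm{Aut}$, with $\Gamma_n$ normal of index $|\Lambda|=2^{n-1}\,n!<\infty$.

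The heart of the matter is the reverse inclusion. I would argue by induction on $N(\phi):=\sum_{i=1}^n\deg\phi_i$, which is $\ge n$ since no coordinate of an automorphism is constant, the engine being a \emph{degree-reduction lemma}. Given $\phi=(\phi_1,\dots,\phi_n)\in\mathrm{Aut}$, write $d_i=\deg\phi_i$, let $P_i$ be the leading homogeneous form of $\phi_i$, put $d=\max_i d_i$ and let $m$ be the number of indices achieving $d$. In $H\circ\phi=\sum_i\phi_i^2-\prod_i\phi_i=H$, the term $\prod_i\phi_i$ has leading form $\prod_iP_i\ne 0$ (as $\CC[\mathbfit{x}]$ is a domain) of exact degree $D:=\sum_id_i$, while $\sum_i\phi_i^2$ has degree $2d$. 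Assuming $d\ge 2$, so that $D>n=\deg H$: comparing the degree-$D$ part of $H\circ\phi$ (which must vanish) forces $\sum_i\phi_i^2$ to reach degree $D$, hence $D\le 2d$; the counting bound $m\ge 2\Rightarrow D\ge 2d+(n-2)>2d$ then forces $m=1$, so $P_i^2$ is the leading form of $\sum_i\phi_i^2$ and is nonzero; comparing the degree-$2d$ part (necessarily $>n$) now gives $D\ge 2d$, hence $D=2d$ and $P_i^2=\prod_kP_k$, i.e.\ $P_i=\prod_{k\ne i}P_k$. Since moreover $\sum_{k\ne i}d_k=D-d=d=\deg\phi_i$, the leading forms of $\prod_{k\ne i}\phi_k$ and of $\phi_i$ coincide and cancel in $(b_i\circ\phi)_i=\prod_{k\ne i}\phi_k-\phi_i$, so $\deg(b_i\circ\phi)_i<d$ while the other coordinates are untouched; thus $N(b_i\circ\phi)<N(\phi)$, and iterating brings us to the case $d=1$.

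It remains to treat an affine automorphism $\phi(\mathbfit{x})=M\mathbfit{x}+\mathbfit{v}$ with $H\circ\phi=H$. Matching the constant and linear parts shows $\mathbfit{v}$ is a critical point of $H$ with $H(\mathbfit{v})=0$; solving $2v_i=\prod_{j\ne i}v_j$ one checks the origin is the only such point, because a nonzero critical point has $H$-value $\tfrac{n-2}{2}\prod_jv_j\ne 0$ (here $n\ge 3$ is used), so $\mathbfit{v}=0$ and $\phi=M$ is linear. As $n\ne 2$, $M$ preserves the quadratic part $\sum x_i^2$ and the degree-$n$ part $x_1\cdots x_n$ separately: the former gives $M\in\mathrm{O}(n,\CC)$, the latter, by unique factorisation of $x_1\cdots x_n$ into linear forms, gives that $M$ is diagonal times a permutation matrix. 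An orthogonal monomial matrix has entries $\pm 1$, and preserving $x_1\cdots x_n$ on the nose forces an even number of $-1$'s, so $M\in\Upsilon_n\rtimes S_n=\Lambda$. Hence $\mathrm{Aut}=\Gamma_n\cdot\Lambda=\Gamma_n\rtimes\Lambda=\Gamma_n^*$. Finally, the presentation $\Gamma_n=\langle b_1,\dots,b_n\mid b_i^2=\Id\rangle$ follows by ping-pong on degrees: the degree-reduction lemma shows that appending any generator other than the current top-degree index to a reduced word strictly increases $N$, so no nontrivial reduced word equals $\Id$, identifying $\Gamma_n$ with the free product of $n$ copies of $\Z/2$. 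The step I expect to require the most care is the leading-form bookkeeping of the degree-reduction lemma — pinning down that the degree comparisons exclude every configuration except $m=1$, $D=2d$ — together with setting up the ping-pong so that the induction genuinely terminates rather than looping.
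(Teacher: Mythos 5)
Your proposal is correct and follows essentially the same route as the paper's Appendix: one checks directly that $\Gamma_n$, $\Upsilon_n$, $S_n$ preserve $H$, and for the converse one compares homogeneous parts of $H\circ\phi=H$ to show that the unique top-degree coordinate has leading form equal to the product of the others' leading forms, so that composing with the corresponding $b_i$ strictly lowers total degree, with the linear case settled by unique factorisation of $x_1\cdots x_n$ and parity of the signs. The only differences are cosmetic (you handle the constant terms via critical points of $H$ rather than by comparing degree-$(n-1)$ parts, and you make explicit the ping-pong argument for $\Gamma_n\cong(\Z/2)^{*n}$ and $\Gamma_n\cap\Lambda=\{\Id\}$, which the paper leaves as "straightforward").
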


 \medskip

 For the second question, it is convenient, and sufficient, to consider  the action of $\Gamma_n$ on $\CC^n$. From a dynamical point of view, this is equivalent to studying the action of $\Gamma_n^*$ on $\CC^n$, since $\Gamma_n$ is a normal subgroup of finite index in $\Gamma_n^*$.  The group $\Gamma_n$ is easier to handle as it is a Coxeter group generated by $n$ involutions with no other relations, and the Cayley graph $\Delta$ of $\Gamma_n$ is a  regular (edge-labeled), rooted $n$-valence tree.  The vertex set $V(\Delta)$ consists of the elements of $\Gamma_n$ (with root the identity element ${\Id} \in \Gamma_n$) and the edge set $E(\Delta)$ is labeled by $1,2, \cdots, n$; two vertices $g, h \in V(\Delta)$ are connected by an $i$-edge if and only if $g=h \cdot b_i$.
 For any subtree $\Delta' \subset \Delta$, denote by $V(\Delta')$ and $E(\Delta')$ the vertex set and edge set of $\Delta'$ respectively, note that $E(\Delta')$ has a labeling induced from the labeling on $E(\Delta)$.

 \medskip

 An element $\mathbfit{a}=(a_1, \cdots, a_n) \in \CC^n$ induces a map
 $$ \Phi:=\Phi_{\mathbfit{a}}:V(\Delta) \rightarrow \CC^n,  ~~\hbox{where}~~\Phi(g)= g^{-1}(\mathbfit{a}), $$
 which satisfies the following edge relations:

 \begin{defn}[Edge relations] Suppose that $g, h \in V(\Delta)$ are connected by an $i$-edge, and $\Phi(g)=(x_1, \cdots, x_i, \cdots, x_n)$ and $\Phi(h)=(y_1, \cdots, y_i, \cdots, y_n)$. Then
 \begin{eqnarray}\label{eqn:edgerelation}
 \nonumber  x_j &=& y_j, \quad j\neq i, \quad 1 \le j \le n  \\
  x_i+y_i &=& x_1\cdots\hat x_i \cdots x_n= y_1\cdots\hat y_i \cdots y_n
 \end{eqnarray}
 \end{defn}

 \begin{figure}[ht]
  \def\svgwidth{\columnwidth}
  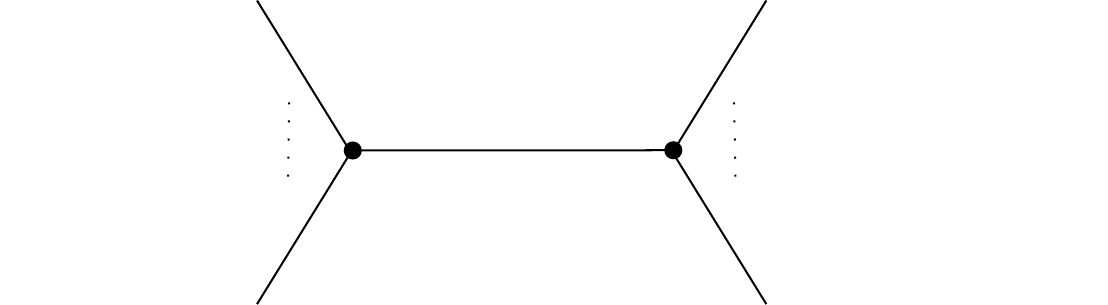
  \caption{Edge relation}
  \label{fig:edegrelation}
 \end{figure}

 We call maps from $V(\Delta)$ to $\CC^n$ which satisfy the edge relations {\it vector-valued Hurwitz} maps. The vector-valued Hurwitz map $\Phi$ describes the orbit of $\mathbfit{a}$ under $\Gamma_n$. Furthermore, because of the edge relations, $\Phi$ is determined by its value on any vertex, in particular, the root; so the set of vector-valued Hurwitz maps can be identified with $\CC^n$.

 \medskip

 We next describe an important set of geometric objects, the alternating geodesics (or 2-regular subtrees of $\Delta$), which will play an important role in describing the domain of discontinuity, as well as the identities. These are bi-infinite geodesics $\gamma \subset \Delta$ consisting of alternating $i$, $j$-edges, where $1 \le i <j \le n$. Denote by $\A$ the set of alternating geodesics and $\A_{ij}$ the set of alternating geodesics consisting of $i$-, $j$-edges. Then $\A = \bigsqcup \A_{ij}$, where the union is taken over all distinct pairs $i,j$. The set $\A$ has some very interesting geometric properties which are explored in greater detail in \S \ref{ss:subtrees} and \S \ref{ss:Fibonacci}. For the purpose of studying the dynamics of the group action, we first note that a vector-valued Hurwitz map $\Phi: V(\Delta) \rightarrow \CC^n$ induces a  map $\phi: \A \rightarrow \CC$ as follows: Given an $\{i,j\}$-alternating geodesic $\gamma$, we note that for all vertices $g \in V(\gamma)$ and $k \neq i,j$, the $k$-th entries of $\Phi(g)$ are the same since by the edge relations (\ref{eqn:edgerelation}) only the $i$th and $j$th entries change when moving along the edges of the alternating geodesic.

 \begin{defn}\label{def:twoinducedmaps}
 Suppose $\gamma \in \A_{ij}$ and $g \in V(\gamma)$ with $\Phi(g)=(x_1, x_2, \cdots, x_n)$. Then
 the {\it (extended) Hurwitz map} $\phi: \A \rightarrow \CC$ is given by $${\displaystyle \phi(\gamma)=\prod_{k \neq i,j} x_k.}$$
 There is also a secondary map, the {\it square sum weight} $\sigma: \A \rightarrow \CC$, given by
 $$ {\displaystyle \sigma(\gamma)=\sum_{k \neq i,j} x_k^2} $$
 which will be useful for the identities later.
 \end{defn}

 \medskip
 Fix $\mathbfit{a} \in \CC^n$, let $\Phi:V(\Delta) \rightarrow \CC^n$ be the induced vector-valued Hurwitz map,  $\phi: \A \rightarrow \CC$ the induced (extended) Hurwitz map and let $K>0$. Define
 $$ \A_{\phi}(K)=\{ \gamma \in \A ~|~ |\phi(\gamma)| \le K\}. $$

 Our second main theorem answering Question 2 is as follows:

 \begin{thm}\label{thm:domainofdis}
 Let $\D \subset \CC^n$ be the set of $\mathbfit{a} \in \CC^n$ for which the induced (extended) Hurwitz map $\phi:=\phi_{\mathbfit{a}}: \A \rightarrow \CC$ satisfy the conditions
 \begin{enumerate}
 \item[(i)] $\phi(\gamma) \not\in [-2,2]$ for all $\gamma \in \A$;
 \item[(ii)] the set $\A_{\phi}(K)$ is finite for some $K>2$.
 \end{enumerate}
 Then $\D$ is a non-empty open subset of $\CC^n$ invariant under $\Gamma_n$ and $\Gamma_n$ acts properly discontinuously on $\D$. Furthermore, $\mathbf{0} \in \CC^n$ does not lie in the closure of $\D$.
 \end{thm}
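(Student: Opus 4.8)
The plan is to establish, in order, $\Gamma_n$-invariance (and non-emptiness), openness, proper discontinuity, and $\mathbf{0}\notin\overline{\D}$; the first is cheap and also produces the main tool. For invariance, left translation $h\mapsto g^{-1}h$ is a label-preserving automorphism $T_g$ of the Cayley tree $\Delta$, and a one-line computation from $\Phi(h)=h^{-1}(\mathbfit{a})$ gives $\Phi_{g(\mathbfit{a})}=\Phi_{\mathbfit{a}}\circ T_g$, hence $\phi_{g(\mathbfit{a})}=\phi_{\mathbfit{a}}\circ T_g$, where $T_g$ permutes the geodesics inside each $\A_{ij}$. Condition (i) is visibly $T_g$-invariant, and since $\A_{\phi_{g(\mathbfit{a})}}(K)=T_g^{-1}\bigl(\A_{\phi_{\mathbfit{a}}}(K)\bigr)$, so is (ii); thus $g(\D)=\D$. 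For non-emptiness I would just exhibit a point: by the edge relations and the Fibonacci growth estimates of \S\ref{ss:Fibonacci}, any $\mathbfit{a}$ all of whose coordinates have sufficiently large modulus has $\A_{\phi}(K)$ empty and all $|\phi(\gamma)|>2$, so it lies in $\D$. The identity above, rewritten as $\Phi_{g(\mathbfit{a})}(g)=\mathbfit{a}$, is the engine for proper discontinuity.

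For openness, fix $\mathbfit{a}\in\D$ with $\A_{\phi_{\mathbfit{a}}}(K)$ finite, $K>2$, and pick $K_1\in(2,K)$. Since $\mathbfit{a}\mapsto\Phi_{\mathbfit{a}}(g)=g^{-1}(\mathbfit{a})$ is polynomial, $\mathbfit{a}\mapsto\phi_{\mathbfit{a}}(\gamma)$ is continuous for each fixed $\gamma$; so on the finite set $\A_{\phi_{\mathbfit{a}}}(K)$ the open condition $\phi(\gamma)\notin[-2,2]$ and the bound $|\phi(\gamma)|\le K$ both persist (with $K$ slightly enlarged) on a neighbourhood $U$ of $\mathbfit{a}$. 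For the geodesics with $|\phi_{\mathbfit{a}}(\gamma)|>K$, all but finitely many lie far out in $\Delta$, where the Fibonacci growth estimates give a multiplicative stability keeping $|\phi_{\mathbfit{a}'}(\gamma)|>K_1$ for $\mathbfit{a}'\in U$ (the finite exceptional set again handled by continuity). Then every $\mathbfit{a}'\in U$ satisfies (i) and $\A_{\phi_{\mathbfit{a}'}}(K_1)\subseteq\A_{\phi_{\mathbfit{a}}}(K)$ is finite, so $U\subseteq\D$.

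For proper discontinuity, let $C\subseteq\D$ be compact, $R=\max_{\mathbfit{c}\in C}\|\mathbfit{c}\|$. If $g(C)\cap C\neq\emptyset$, choose $\mathbfit{a}\in C$ with $\mathbfit{b}:=g(\mathbfit{a})\in C$; then $\Phi_{\mathbfit{b}}(g)=g^{-1}(\mathbfit{b})=\mathbfit{a}$, so $\|\Phi_{\mathbfit{b}}(g)\|\le R$ with $\mathbfit{b}\in C$. Hence it suffices to show $W:=\{g\in V(\Delta):\|\Phi_{\mathbfit{b}}(g)\|\le R\text{ for some }\mathbfit{b}\in C\}$ is finite. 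For a single $\mathbfit{b}\in\D$ one shows $\|\Phi_{\mathbfit{b}}(g)\|\to\infty$ as $d(\Id,g)\to\infty$: finiteness of $\A_{\phi_{\mathbfit{b}}}(K)$ confines the ``slow'' alternating geodesics to a finite subtree $\Sigma$, outside of which every alternating geodesic has $|\phi|>K$ and so forces exponential growth of its two varying coordinates at a rate bounded below in terms of $K$, while the finitely many slow geodesics meeting $\Sigma$ produce only bounded distortion; so vertices far from $\Sigma$ carry large $\Phi$-values. Upgrading to the locally uniform statement---each $\mathbfit{c}\in\D$ has a neighbourhood $U_{\mathbfit{c}}$ and a finite $V_{\mathbfit{c}}\subseteq V(\Delta)$ with $\|\Phi_{\mathbfit{b}}(g)\|>R$ for all $\mathbfit{b}\in U_{\mathbfit{c}}$, $g\notin V_{\mathbfit{c}}$ ($\Sigma$ and the growth rates varying controllably, by the openness argument)---and covering $C$ by finitely many $U_{\mathbfit{c}}$ gives $W\subseteq\bigcup V_{\mathbfit{c}}$, finite.

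Finally, to see $\mathbf{0}\notin\overline{\D}$, I would show a small neighbourhood $N$ of $\mathbf{0}$ is disjoint from $\D$. Near $\mathbf{0}$ each $b_i$ agrees with the linear reflection $x_i\mapsto-x_i\in\Lambda$ up to the term $\prod_{j\neq i}x_j=O(\|\mathbfit{x}\|^{\,n-1})$, and from this one shows that for $\mathbfit{a}\in N$ the $\Gamma_n$-orbit of $\mathbfit{a}$ stays bounded; then $\phi$ is bounded on all of $\A$, so $\A_{\phi}(K)=\A$ is infinite once $K$ exceeds that bound, unless some $\phi(\gamma)$ already lies in $[-2,2]$---either way (ii) (resp. (i)) fails on $N$. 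I expect the main obstacle to be the growth input behind openness and proper discontinuity, namely controlling how exponential growth along one alternating geodesic is transmitted across branch vertices to its neighbours and making this uniform over compacta; the orbit-boundedness near $\mathbf{0}$ is the delicate point in the last assertion.
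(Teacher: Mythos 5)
Your overall architecture (invariance, non-emptiness via points with large coordinates, openness by perturbation, proper discontinuity via uniform growth on compacta, exclusion of a neighbourhood of $\mathbf{0}$) matches the paper's, and the invariance computation and the skeleton of the proper-discontinuity step are fine. But two of the steps rest on assertions that are not justified, and one of them is false as stated.

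\textbf{Openness.} For the infinitely many $\gamma$ with $|\phi_{\mathbfit{a}}(\gamma)|>K$ you invoke ``Fibonacci growth estimates give a multiplicative stability'' keeping $|\phi_{\mathbfit{a}'}(\gamma)|>K_1$ on a neighbourhood. This is circular: the lower Fibonacci bound (Proposition \ref{prop:lowerfibonacci}) is proved \emph{from} membership in $\B$, via the existence of a finite $\phi$-attracting subtree, and its constant depends on the values of $\phi$ on the circular set of that subtree; you cannot apply it to $\phi_{\mathbfit{a}'}$ before knowing $\phi_{\mathbfit{a}'}\in\B$, which is the thing being proved. Pointwise continuity of $\mathbfit{a}'\mapsto\phi_{\mathbfit{a}'}(\gamma)$ gives nothing uniform over infinitely many $\gamma$, since these are polynomials of unbounded degree in $\mathbfit{a}'$. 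The missing mechanism is the enlarged attracting subtree $T_{\phi}(t)$ (Proposition \ref{prop:enlattrsubtree}) together with the trapping argument of Theorem \ref{thm:open}: fix $t_0<t_1<t_2$, check on the finite annulus $T_{\phi}(t_2)\setminus T_{\phi}(t_1)$ that $T_{\phi'}(t_0)\cap T_{\phi}(t_2)\subset T_{\phi}(t_1)$ for $\phi'$ near $\phi$, and use connectedness of $T_{\phi'}(t_0)$ to force $T_{\phi'}(t_0)\subset T_{\phi}(t_2)$, hence finite, hence $\phi'\in\B$. Your proper-discontinuity step is structurally the paper's Theorem \ref{thm:properdisc}, but its ``locally uniform'' growth constant is exactly what this trapping argument supplies, so it inherits the gap.

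\textbf{$\mathbf{0}\notin\overline{\D}$.} The claim that the $\Gamma_n$-orbit of a small point stays bounded is false in general: the elementary estimate only gives $\|b_i(\mathbfit{x})\|_\infty\le\epsilon+\epsilon^{n-1}$ when $\|\mathbfit{x}\|_\infty\le\epsilon$, and iterating $\epsilon\mapsto\epsilon+\epsilon^{n-1}$ diverges; generic orbits of small points are unbounded, and only certain descending subfamilies stay small. Moreover, even granting a bound $M$ on the orbit, your conclusion only makes $\A_{\phi}(K)$ infinite for $K\ge M^{n-2}$, which does not negate condition (ii) --- that requires $\A_{\phi}(K)$ infinite for \emph{every} $K>2$ --- unless $M^{n-2}<2$. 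The paper's Theorem \ref{thm:complementset} instead shows, for each small $\mathbfit{a}$, that either some $\phi(\gamma)\in(-2,2)$ (so (i) fails) or one can inductively produce infinitely many $\gamma_m$ whose $n-2$ relevant coordinates are all below $\varepsilon_2$, hence $|\phi(\gamma_m)|<\varepsilon_2^{n-2}<2$, so $\A_{\phi}(K)$ is infinite for every $K>2$.
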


 Although the sets $\D$  above are defined by rather simple conditions, they are typically very complicated with fractal like boundary. Figure \ref{fig:diagslices} shows computer generated images of the {\it diagonal slices} of the sets (where $\mathbfit{a}=(a,a,\cdots, a) \in \D$) in the case where $n=3$ and $4$ (points in the black part correspond to points in $\D$).
 Furthermore, in the $n=3$ case, the diagonal slice is very different from the diagonal slice of the Schottky space, a set which is defined from geometric considerations, see \cite{STY}.

 \begin{figure}[ht]
\centering
\begin{subfigure}{.5\textwidth}
  \centering
  \includegraphics[width=.85\linewidth]{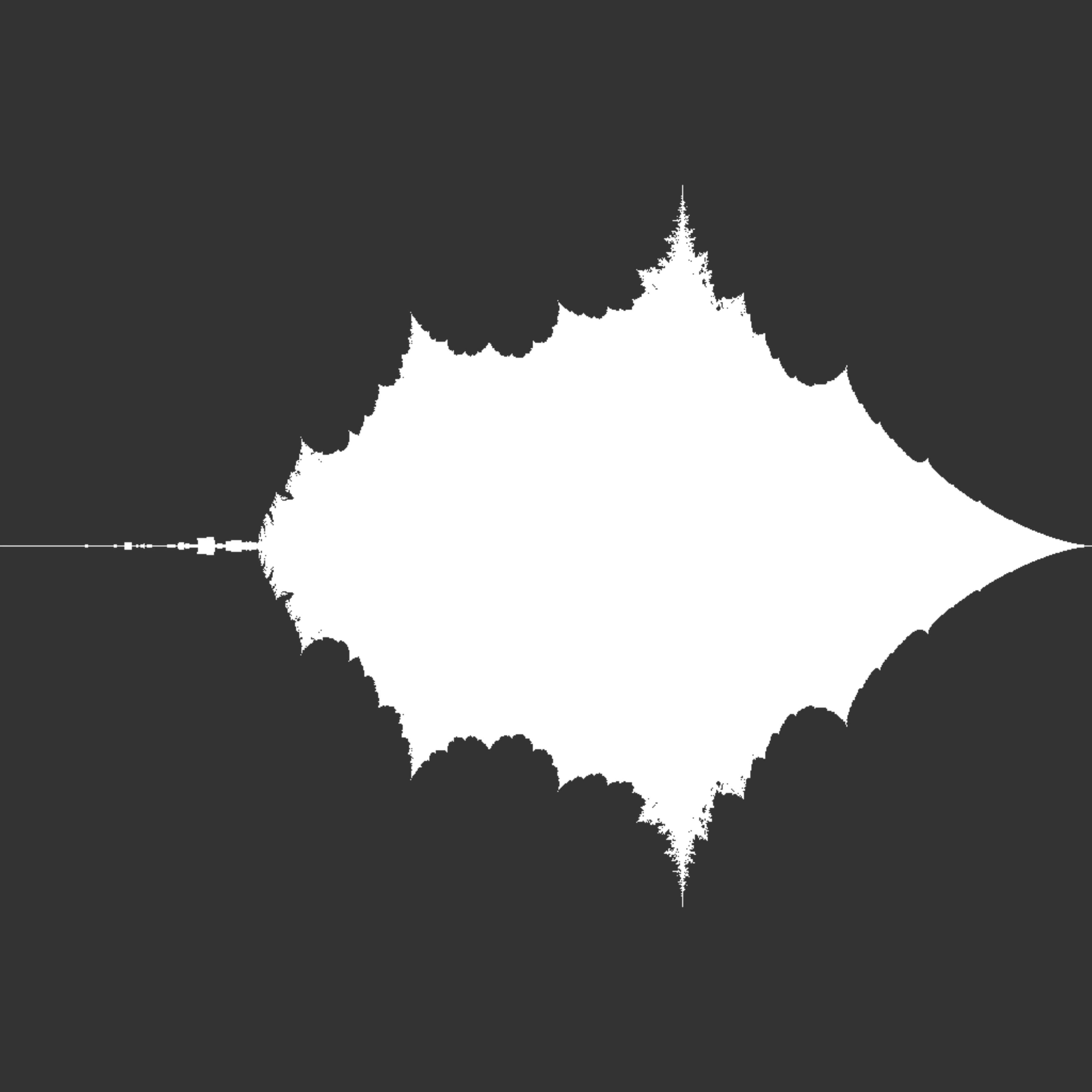}
\end{subfigure}%
\begin{subfigure}{.5\textwidth}
  \centering
  \includegraphics[width=.85\linewidth]{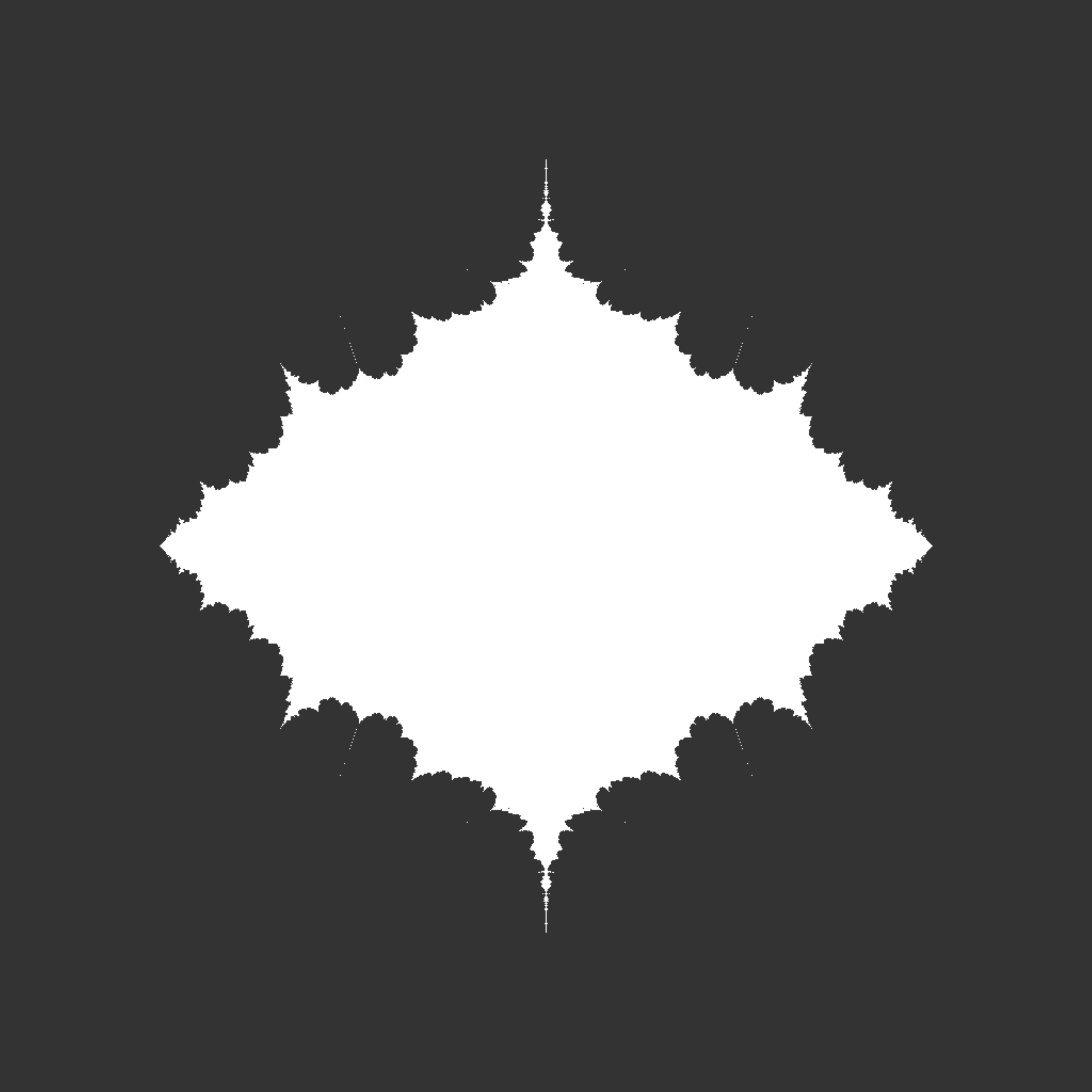}
\end{subfigure}
\caption{Diagonal slices of the set $\D$ for the cases $n=3$ (left) and $n=4$, Courtesy of Yasushi Yamashita. }
\label{fig:diagslices}
\end{figure}

 An important property of the sets $\A_{\phi}(K)$ for $K \ge 2$ is that they are {\it edge-connected} (see Proposition \ref{prop:e-c}). That is, if $\gamma, \gamma' \in \A_{\phi}(K)$, there exists a sequence $\gamma_1, \cdots, \gamma_m \in \A_{\phi}(K)$ such that $\gamma=\gamma_1$, $\gamma'=\gamma_m$ and $\gamma_i$ and $\gamma_{i+1}$ share an edge for $i=1, \cdots, m-1$. This makes it possible to write a computer program to search for the set $\A_{\phi}(K)$ when it is finite, and hence determine the diagonal slice of  $\D$.

 Another important property of  the maps $\phi$ induced by elements of $\D$ is that the function $\log ^+|\phi|=\max \{0, \log |\phi|\}$ has Fibonacci growth (see \S \ref{ss:Fibonacci} for the definition of the Fibonacci function on $\A$ and Propositions \ref{prop:upperfibonacci} and \ref{prop:lowerfibonacci} for the precise statement). This  allows us to deduce the absolute convergence of certain sums over $\A$, which are used in the proofs of the last set of results concerning identites for orbits of elements in $\D$. More precisely, we have the following theorem which answers Question 3.

 \begin{thm}\label{thm:identity}
 Suppose that $\mathbfit{a}=(a_1, \cdots, a_n) \in \D \subset \CC^n$ as in Theorem \ref{thm:domainofdis}. Suppose further that
 $$ a_1^2+a_2^2+\cdots +a_n^2-a_1a_2\cdots a_n=\mu. $$
 Then
 \[
 \sum_{\gamma \in \A } \mathfrak{h}_{\mu}(\gamma) = 1
 \]
 where the series converges absolutely and
 \[
 \mathfrak{h}_{\mu}(\gamma):= 1- \left(1+\frac{2 \mu}{n(n-1) (\sigma(\gamma)-\mu)}\right) \sqrt{1-\frac{4}{(\phi(\gamma))^2}},
 \]
 where $\phi, \sigma: \A \rightarrow \CC$ are the (extended) Hurwitz map and  the square sum weight given in Definition \ref{def:twoinducedmaps}.

 In particular, when $\mu=0$ we have
 \[
 \sum_{\gamma \in \A} h(\phi(\gamma)) = 1
 \]
 where the sum converges absolutely and for $z \neq 0$
 \[
 h(z)=1-\sqrt{1-\frac{4}{z^2}}.
 \]
 \end{thm}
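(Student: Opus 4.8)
The plan is to follow the template that underlies every McShane-type identity (see \cite{mcshane1991thesis,bowditch1998plms,tan-wong-zhang2008advm,hu-tan-zhang2014mrl}): isolate a purely algebraic \emph{local identity} governed by the edge relations (\ref{eqn:edgerelation}), and then \emph{telescope} it over the tree $\Delta$, using the Fibonacci growth estimates both to guarantee absolute convergence and to kill the terms coming from infinity. As a preliminary remark, for $\gamma\in\A_{ij}$ the quantities $\phi(\gamma)=\prod_{k\ne i,j}x_k$ and $\sigma(\gamma)=\sum_{k\ne i,j}x_k^2$ really are functions of $\gamma$ alone, since moving along $\gamma$ changes only the $i$th and $j$th coordinates of $\Phi$; hence each $\gamma\in\A$ carries a single complex weight $\mathfrak h_\mu(\gamma)$.

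\emph{Combinatorial decomposition.} Fix an edge $e$ of $\Delta$, of type $i_0$, with endpoints $u$ and $v$, and write $\Phi(u)=(x_1,\dots,x_n)$, which agrees with $\Phi(v)$ in every coordinate except the $i_0$th. Removing $e$ splits $\Delta$ into the $u$-side and $v$-side half-trees; every $\gamma\in\A$ either lies entirely in one of them, or traverses $e$, and a $\gamma$ traversing $e$ is determined by its type $\{i_0,j\}$ with $j\ne i_0$, so exactly $n-1$ geodesics $\gamma_{e,j}$ ($j\ne i_0$) traverse $e$, with $\phi(\gamma_{e,j})=\prod_{k\ne i_0,j}x_k$. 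Letting $T_v(e)$ (resp.\ $T_u(e)$) denote the sum of $\mathfrak h_\mu(\gamma)$ over the $\gamma$ contained in the $v$-side (resp.\ $u$-side) half-tree — a sum whose absolute convergence is part of what must be established — we obtain, for every edge $e$,
\[
\sum_{\gamma\in\A}\mathfrak h_\mu(\gamma)=T_v(e)+T_u(e)+\sum_{j\ne i_0}\mathfrak h_\mu(\gamma_{e,j}).
\]

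\emph{The local identity (the crux).} The heart of the argument is to produce an explicit ``potential'' $G_\mu(\mathbfit x;i_0)$, defined for $\mathbfit x\in\CC^n$ with $H(\mathbfit x)=\mu$ and symmetric in the coordinates other than the $i_0$th — the algebraic shadow of the length of the portion of a cusp horocycle ``shadowed'' by one side of an edge in McShane's geometric picture — together with two identities. The first is a \emph{descent relation}
\[
G_\mu(\mathbfit x;i_0)=\sum_{\{i,j\}\not\ni i_0}\mathfrak h_\mu(\gamma^{\mathbfit x}_{ij})+\sum_{l\ne i_0}G_\mu(\mathbfit x^{(l)};l),
\]
where $\gamma^{\mathbfit x}_{ij}$ carries $\phi=\prod_{k\ne i,j}x_k$, $\sigma=\sum_{k\ne i,j}x_k^2$, and $\mathbfit x^{(l)}$ is $\mathbfit x$ with its $l$th coordinate replaced by its Vieta partner $\prod_{k\ne l}x_k-x_l$; this mirrors exactly how $\Phi$ and the half-trees behave at a vertex (the $\binom{n-1}{2}$ summands in the first sum are the geodesics that turn around at the vertex, the $n-1$ summands in the second are the descendant half-trees). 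The second is a \emph{terminal relation}: for every edge $e=uv$ of type $i_0$,
\[
G_\mu(\Phi(u);i_0)+G_\mu(\Phi(v);i_0)+\sum_{j\ne i_0}\mathfrak h_\mu(\gamma_{e,j})=1 .
\]
Once the square roots are cleared and $\prod_k x_k=\sum_k x_k^2-\mu$ is substituted, both reduce to polynomial identities in $x_1,\dots,x_n$; the seemingly mysterious factor $1+\frac{2\mu}{n(n-1)(\sigma(\gamma)-\mu)}$ in $\mathfrak h_\mu$ is precisely what makes the descent relation close up for $\mu\ne0$, $n(n-1)/2=\binom n2$ being the number of geodesics through a vertex among which the ``defect'' $\mu$ must be distributed. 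Guessing the correct closed form for $G_\mu$ — by specializing to $n=3$ and matching McShane's gap formula, then extrapolating — and verifying these two identities is where the particular shape of $H$ enters, and I expect this algebraic step to be the main obstacle.

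\emph{Convergence and passage to the limit.} From $\sqrt{1-4/z^2}=1-2/z^2+O(z^{-4})$ one gets $\mathfrak h_\mu(\gamma)=O(|\phi(\gamma)|^{-2})$ uniformly (for $\mathbfit a\in\D$ the weight $\mathfrak h_\mu$ is well defined and its correction factor bounded — this uses the defining conditions of $\D$ together with the estimates of \S\ref{ss:Fibonacci}). Since $\A_\phi(K)$ is finite for some $K>2$ and $\log^+|\phi|$ has Fibonacci growth on $\A$ (Propositions \ref{prop:upperfibonacci} and \ref{prop:lowerfibonacci}), the tail $\sum_{|\phi(\gamma)|>K}|\phi(\gamma)|^{-2}$ is dominated by a sum of the type $\sum_{\gamma\in\A}e^{-2cF(\gamma)}<\infty$, where $F$ is the Fibonacci function of \S\ref{ss:Fibonacci}; hence $\sum_{\gamma\in\A}\mathfrak h_\mu(\gamma)$ converges absolutely and each $T_v(e)$ is well defined. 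Iterating the descent relation through the first $N$ levels of the $v$-side half-tree gives
\[
G_\mu(\Phi(v);i_0)=\sum_{\gamma\text{ meeting those }N\text{ levels}}\mathfrak h_\mu(\gamma)+\sum_{w\text{ at level }N}G_\mu(\Phi(w);i_w),
\]
and the boundary sum tends to $0$ as $N\to\infty$: by conditions (i)--(ii) defining $\D$ and the edge-connectedness of $\A_\phi(K)$ (Proposition \ref{prop:e-c}), all but finitely many geodesics lie outside any fixed large ball about the root, so along every deep branch $\Phi$ escapes to infinity and $G_\mu$ there tends to $0$, while the finitely many exceptional contributions are pushed past level $N$ once $N$ is large. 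Therefore $T_v(e)=G_\mu(\Phi(v);i_0)$ and likewise $T_u(e)=G_\mu(\Phi(u);i_0)$, so the combinatorial decomposition together with the terminal relation yields
\[
\sum_{\gamma\in\A}\mathfrak h_\mu(\gamma)=G_\mu(\Phi(u);i_0)+G_\mu(\Phi(v);i_0)+\sum_{j\ne i_0}\mathfrak h_\mu(\gamma_{e,j})=1 .
\]
Finally, for $\mu=0$ the correction factor collapses and $\mathfrak h_0(\gamma)=h(\phi(\gamma))=1-\sqrt{1-4/\phi(\gamma)^2}$, which is the stated special case.
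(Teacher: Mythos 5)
Your proposal has a genuine gap at precisely the point you flag as ``the crux'': the potential $G_\mu(\mathbfit{x};i_0)$ and its two relations are never constructed or verified, and without them there is no proof. This is not a routine verification you can defer. In the paper's argument the local object is the edge weight $\psi(\vec{e})=x_i/\prod_{j\neq i}x_j$, and its descent relation reads $\psi(-\vec{e}_i)=\sum_{j\neq i}\psi(\vec{e}_j)-\mu/\phi(v)$: note that the ``turn-around'' contribution at a vertex is \emph{not} the weight $\mathfrak{h}_\mu$ of a geodesic but the scalar $\mu/\phi(v)$, and the terminal relation is simply $\psi(\vec{e})+\psi(-\vec{e})=1$. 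Telescoping this exact finite identity over expanding subtrees $T_m$ gives $\sum_{\vec{e}\in C(T_m)}\psi(\vec{e})-\sum_{v\in V(T_m)}\mu/\phi(v)=1$; the conversion of boundary $\psi$'s into the weights $h(\phi(\gamma))$ happens only in the limit, via the estimate $|\psi(\vec{e})-\tfrac12 h(\phi(\gamma))|\le C|x_j|^{-2}$ and the convergence of $\sum_{X\in\C}|\phi(X)|^{-2}$, yielding first $\sum_\gamma h(\phi(\gamma))-\sum_v\mu/\phi(v)=1$. The $\sigma(\gamma)$-correction in $\mathfrak{h}_\mu$ only appears at the very last step, by distributing each $\mu/\phi(v)$ equally among the $n(n-1)/2$ geodesics through $v$ and invoking the closed-form summation $\sum_{v\in V(\gamma)}\mu/\phi(v)=\frac{\mu}{\sigma(\gamma)-\mu}\sqrt{1-4/\phi(\gamma)^2}$ (Proposition 4.1 of \cite{hu-tan-zhang2014mrl}). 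So the correction factor encodes an infinite sum over \emph{all} vertices of $\gamma$, which is why a descent relation that emits the full weight $\mathfrak{h}_\mu(\gamma)$ at the single vertex where $\gamma$ ``turns around'' is not something you can expect to verify as a polynomial identity at one vertex; you would have to rediscover the summation identity above to even state the correct $G_\mu$.

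A second, concrete error: your convergence estimate $\mathfrak{h}_\mu(\gamma)=O(|\phi(\gamma)|^{-2})$ is false for $n\ge 4$ and $\mu\neq 0$. Writing $\mathfrak{h}_\mu(\gamma)=\bigl(1-\sqrt{1-4/\phi(\gamma)^2}\bigr)-\frac{2\mu}{n(n-1)(\sigma(\gamma)-\mu)}\sqrt{1-4/\phi(\gamma)^2}$, the second term is of order $1/|\sigma(\gamma)-\mu|$; if all $n-2$ coordinates entering $\gamma$ have modulus $R$, this is $\sim R^{-2}$ while $|\phi(\gamma)|^{-2}\sim R^{-2(n-2)}$. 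Worse, since the coordinates are complex, $\sigma(\gamma)=\sum_k x_k^2$ can be small through cancellation even when every $|x_k|$ is large, so $\sum_\gamma 1/|\sigma(\gamma)-\mu|$ is not obviously controlled by Fibonacci growth of $\log^+|\phi|$ alone. The paper gets absolute convergence of the correction terms for free from the rearrangement $\sum_\gamma\sum_{v\in V(\gamma)}|\mu/\phi(v)|=\frac{n(n-1)}{2}\sum_v|\mu/\phi(v)|<\infty$ together with the closed-form identity above — another reason the detour through the vertex sum $\sum_v\mu/\phi(v)$ is not cosmetic but essential.
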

%

 \noindent {\it Remarks:}
 \begin{enumerate}
 \item The case where $n=3$, $(a_1,a_2,a_3) \in \RR^3 \setminus \{(0,0,0)\}$ and $\mu=0$ is the original McShane identity, the case where $(a_1,a_2,a_3) \in \CC^3$,  $\phi$ satisfies the conditions of Theorem \ref{thm:domainofdis} and $\mu=0$ is Bowditch's generalization, and when $\mu \neq 0$, this was done by the authors in \cite{hu-tan-zhang2014mrl}.

 \item For the case where $n=4$ and $\mu=0$ Huang and Norbury \cite{HN} found a very interesting geometric interpretation as an identity for simple one-sided geodesics on the thrice-punctured projective plane.


 \item There are other ways in which the group $\Gamma_n$ can act on $\CC^n$ which are of interest geometrically. An example of this can be found in \cite{maloni-palesi-tan2013ggd} which studies the action of $\Gamma_3$ on $\CC^3$ arising from the action of the mapping class group on the relative character variety of a four-holed sphere, fixing the traces of the four boundary components. Another interesting example can be obtained by looking at the relative character variety of the four-holed sphere where we fix the traces of three interior simple closed curves to be zero and look at the action of a different subgroup of $\rm{Out}(F_3)$ on this, preserving the traces of the interior curves. In this case we get an action of $\Gamma_4$ on $\CC^4$ which is similar to the one we consider. In these cases, the polynomials preserved by the group action have degree $n$ and the methods we use here can be applied to these cases directly.

 \item Another very interesting example of actions of $\Gamma_n$ on $\RR^n$ occurs in the study of Apollonian circle/sphere packings, here the group action preserves a quadratic polynomial and it would be interesting to try to apply the techniques developed here to this class of problems.

 \item An earlier version of some of the results here are contained in the first author's Ph.D. thesis \cite{hu2013thesis}.

 \end{enumerate}

 \bigskip

 The rest of this paper is organized as follows. We defer the proof of Theorem \ref{thm:polyauto} to the Appendix as the proof uses elementary techniques which are somewhat independent from the rest of the paper. In \S \ref{s:setting}, we describe the combinatorial setting for the problem,  set up the notation, introduce some of the basic geometric and combinatorial objects, and prove some basic results. \S \ref{s:domainofdis} is one of the main parts of the paper where we prove some of the main results for Hurwitz maps, leading up to the proof of  Theorem \ref{thm:domainofdis}. In \S \ref{s:proof} we discuss the identities and prove Theorem \ref{thm:identity}. Finally, in \S \ref{s:conclusion}, we discuss some open problems and directions for further study.

 \bigskip

 \noindent {\it Acknowledgements}. We are grateful to Martin Bridgeman, Dick Canary, Fran\c{c}ois Labourie, Bill Goldman, Makoto Sakuma and Yasushi Yamashita for helpful conversations and comments. In particular, we are very grateful to Dick Canary who had asked about the domain of continuity for the action of $\Gamma_n$ on $\CC^n$, to Yasushi Yamashita who had written the computer program which drew the beautiful slices in Figure \ref{fig:diagslices}, and to Bill Goldman for his constant support and encouragement. We would also like to thank Yi Huang and Paul Norbury for bringing our attention to \cite{HN} and the geometric interpretation of our identity for the $n=4$ homogeneous case, see Remark (2) above.


 \section{Combinatorial setting and basic results}\label{s:setting}

 \subsection{Cayley graphs and orbits }\label{ss:cayley}
 Let $\Gamma_n:=\langle\, b_1, \cdots, b_n \mid b_1^2= \cdots= b_n^2={\mathrm{Id}} \,\rangle$ act on $\CC^n$ as follows:
 \[
 b_i (x_1, \cdots, x_i, \cdots, x_n) := \Big(x_1,\cdots,x_{i-1},\prod_{j \neq i}x_j-x_i,x_{i+1},\cdots,x_n\Big).
 \]
 Clearly, the Markoff-Hurwitz polynomial (\ref{eqn:Hurwitz}) is invariant under $\Gamma_n$.


 The Cayley graph $\Delta:=\Delta(\Gamma_n)$ of $\Gamma_n$ is a {\em rooted, regular} $n$-tree with vertex set $V(\Delta)=\Gamma_n$, where the root $v_0$ corresponds to the identity element of $\Gamma$. Two vertices $g, h \in V(\Delta)$ are connected by an edge if and only if $g=hb_i$ for some $i \in \{1,2, \cdots, n\}$. The corresponding edge of $\Delta$ is labeled (or colored) by $i $, so the edges incident to any given vertex are labeled by the distinct elements of $\{1,\cdots,n\}$, giving a regularly labeled $n$-tree. See Figure \ref{fig:4rootedtree} for the case of $n=4$.

\begin{figure}[ht]
\def\svgwidth{.8\columnwidth}
  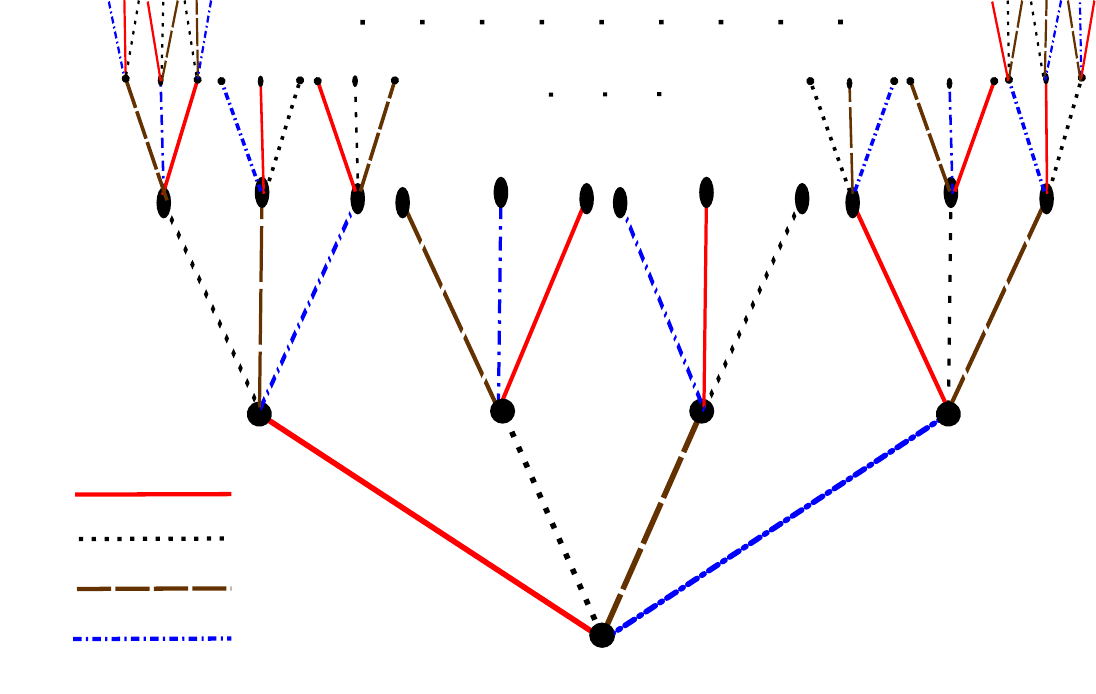
  \caption{The rooted regular tree $\Delta$ for $n=4$}
  \label{fig:4rootedtree}
\end{figure}

An element $\mathbfit{a} \in \CC^n$ induces a {\em vector-valued  Hurwitz map} 
$$ \Phi:=\Phi_{\mathbfit{a}}: V(\Delta) \rightarrow \CC^n $$ 
where $\Phi(g)=g^{-1}(\mathbfit{a})$. $\Phi$ satisfies the edge relations (\ref{eqn:edgerelation}). Furthermore, the set $\{  \mathbfit{x} =\Phi(g) ~|~ g \in \Gamma_n\}$ is the orbit of $\mathbfit{a}$ under $\Gamma_n$.

There are many interesting combinatorial objects and structures associated to $\Delta$, and  maps from these objects to $\CC$ induced from $\Phi$ which we describe in the next sections. We  will emphasize the combinatorial setting. The group action will be reflected by various edge relations satisfied by the maps.

\subsection{Index sets $\setn$, $\powern$, $\hatI$, $\{\hat{i}\}$ and $\nck$}\label{ss:index}
We set some notation for index sets. Let $\setn:=\{1,2, \cdots, n\}$ and $\powern$ the power set of $\setn$. For $I \in \powern$, $|I|$ denotes the cardinality of $I$ and $\hatI:=\setn \setminus I$ denotes the complement of $I$ in $\setn$. Let $\nck =\{I \in  \powern ~:~ \vert I \vert =k\} $. For $I=\{i\}$,  use $\{\hat{i}\}$ to denote $\hatI$.

\subsection{Rooted regular trees}\label{ss:trees}
Let $\Delta$ be a {\em rooted, regularly labeled} $n$-tree, that is, an $n$-valent tree with a distinguished vertex $v_0$, the root, such that incident to each vertex, the $n$ edges are labeled by distinct elements of $\setn$.

We put the standard metric on $\Delta$ where every edge has length one and denote by $d(T)$ the distance of any subtree $T \subset \Delta$ to $v_0$.

Denote by $V(\Delta)$ and $E(\Delta)$  the vertex set and the edge set of $\Delta$ respectively. For $e \in E(\Delta)$ denote by $\bar{e}$ the closure of $e$, that is, the subtree consisting of $e$ and the two vertices incident to $e$. Every $e \in E(\Delta)$ can be directed in two ways, denote by $\vec{E}(\Delta)$ the set of directed edges of $\Delta$ and for $\vec{e} \in \vec{E}(\Delta)$ denote by $-\vec{e}$ the directed edge with the same underlying edge $e$ but directed in the opposite direction of $\vec{e}$.
We think of a directed edge $\vec{e}$  as specifying the two vertices incident to $e$ as the {\em tail} and {\em head} (we speak of $\vec{e}$ as being `directed towards' its head).  Note that every directed edge $\vec{e}$ is either directed towards the root $v_0$, or away from it.

%
%
%

\subsection{Regular subtrees: $\TkDelta$, $\C:=T^{|n-1|}(\Delta)$ and $\A:=T^{|2|}(\Delta)$}\label{ss:subtrees}
Let $\Delta$ be a rooted regularly labeled $n$-tree and let $I \in \nck$. Denote by $E_I(\Delta) \subset E(\Delta)$ the set of edges labeled by the elements of $I$; in particular $E(\Delta) = \bigsqcup_{i=1}^n E_{\{i\}}(\Delta)$. Then $\Delta \setminus E_{\hatI}(\Delta)$ is a disjoint union of $I$-regular trees, we call the set of connected components of $\Delta \setminus E_{\hatI}$ the $I$-forest of $\Delta$, denoted by $T^I(\Delta)$; its elements are exactly the regularly labeled $k$-subtrees of $\Delta$ labeled by $I$. We denote the set of all regularly labeled $k$-subtrees of $\Delta$ by $\TkDelta$, so
\[
\TkDelta=\bigsqcup_{I \in \nck} T^I(\Delta).
\]
In particular, we have the following decomposition of $T^{|n-1|}(\Delta)$:
\[T^{|n-1|}(\Delta)=\bigsqcup_{i=1}^{n}T^{\{\hat{i}\}}(\Delta).\]


Thus $T^{|0|}(\Delta)=V(\Delta)$, $T^{|1|}(\Delta)=\overline{E}(\Delta):=\{\bar{e} ~|~ e \in E(\Delta)\}$,
$T^{|2|}(\Delta)$ is the set of alternating geodesics introduced in \S\ref{s:intro}, and $T^{|n|}(\Delta)=\{\Delta\}$. We shall be particularly interested in  $T^{|2|}(\Delta)$ and $T^{\{i,j\}}(\Delta)$, $1 \le i <j \le n$, which we denote by $\A$ and $\A_{ij}$, these are the set of alternating geodesics described in the introduction. In addition, $\Tnminusone$ is denoted by $\C$, and $T^{\{\hat{i}\}}(\Delta)$ by $\C_i$.
See Figure \ref{fig:elementsofCD} for examples of the case $n=4$. We will be mostly interested in maps from $\C$ and $\A$ into $\CC$ induced by a vector-valued Hurwitz map $\Phi$.

\begin{figure}[ht]
\begin{subfigure}{.3\textwidth}
  \centering
  \def\svgwidth{\columnwidth}
  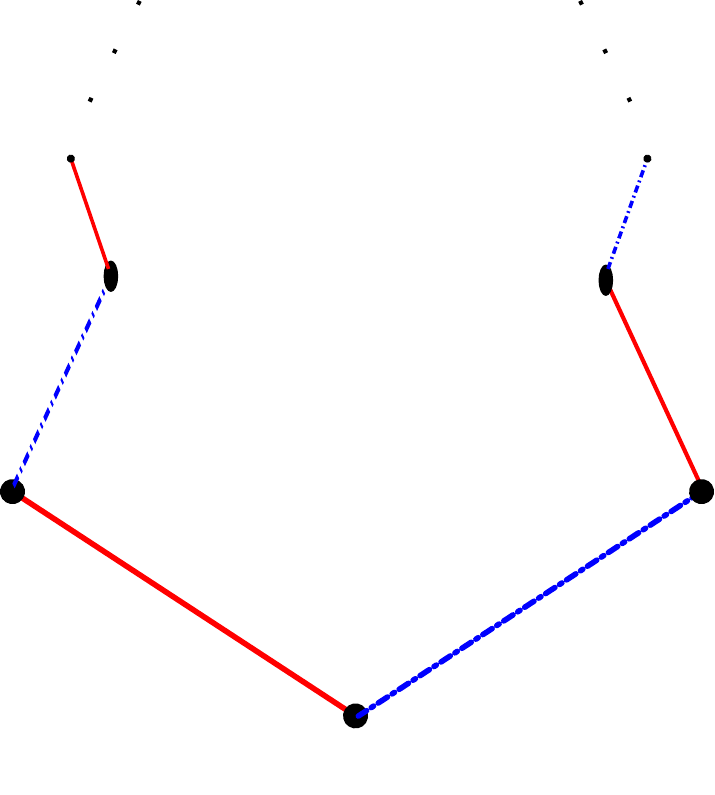
\end{subfigure}%
\begin{subfigure}{.4\textwidth}
  \centering
  \def\svgwidth{\columnwidth}
  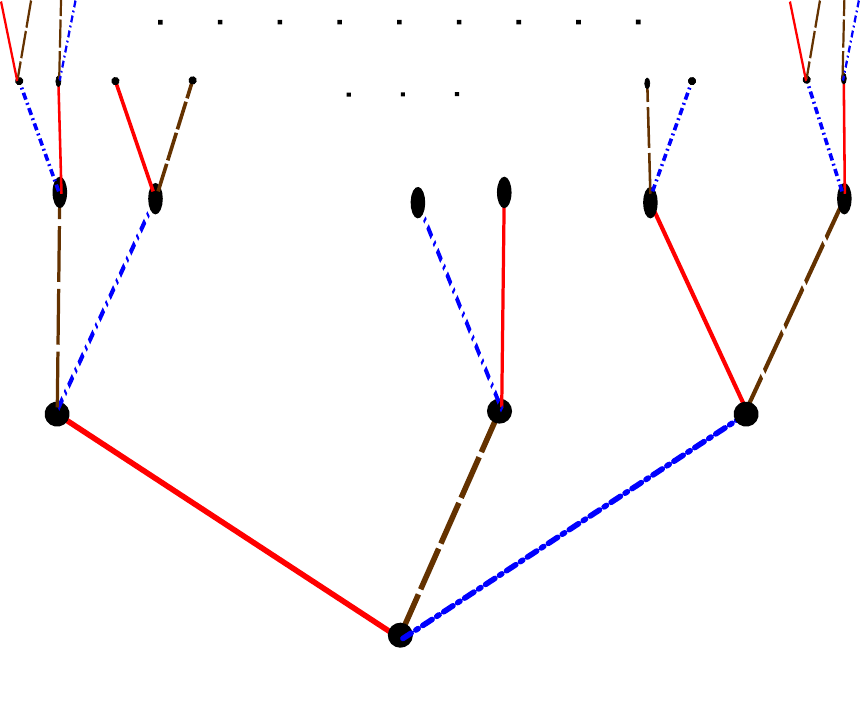
\end{subfigure}
\caption{Examples of an element in $\A$ (left) and an element in $\C$ for the case $n=4$}
\label{fig:elementsofCD}
\end{figure}


A regularly labeled $k$-subtree of $\Delta$ is determined by its color set $I \in \nck$ and any one of its vertices, $v$, hence  we can denote it by $$[v;I] \in T^I(\Delta).$$ The vertex $v$ is not unique, nonetheless, there is a unique vertex $v^* \in V([v;I])$ which is closest to the root $v_0 \in V(\Delta)$ so $d([v;I])=d(v^*)=l(\alpha)$ where $l(\alpha)$ is the length of the geodesic $\alpha$ from $v_0$ to $v^*$. Hence, $[v;I]$ inherits a rooted structure, with root $v^*$.

For convenience, we adopt the following conventions. Elements of $\A$ will be denoted with Greek letters $\alpha, \beta, \gamma, \delta$. Elements of $\C$ will be denoted by upper case letters $X, Y$ or $Z$.

Suppose that $I \in \nck$. Then
\[ [v;I]=\bigcap_{j \in \hatI} [v; \{\hat{j}\}].
\]
In particular,
\[
v=[v;\emptyset]=\bigcap_{j =1}^n [v; \{\hat{j}\}]=\bigcap_{j =1}^n X_j,
\]
where $X_j:=[v; \{\hat{j}\}] \in \C_j$.

Let $e \in E_{\{i\}}(\Delta)$ and $V(\bar{e})=\{v, v'\}$, that is, $e$ is labeled by $i$ and $v$ and $v'$ are the vertices incident to $e$. For $ j \in \setn$, let $X_j:=[v; \{\hat{j}\}]$ and $X_j':=[v'; \{\hat{j}\}]$. Then  $X_j=X_j'$ for $j \neq i$, but $X_i \neq X_i'$.
In this case, we write
 $$ e \leftrightarrow (\{X_1,\cdots,\hat{X}_i,\cdots,X_n\};\{X_i,X'_i\}). $$
 We shall also write
 $$ \vec{e} \leftrightarrow (\{X_1,\cdots,\hat{X}_i,\cdots,X_n\};X_i \rightarrow X'_i) $$
 for the directed edge $\vec{e}$ on $e$ pointing away from $X_i$ and towards $X'_i$.



 \subsection{Hurwitz maps} We define functions from $\C=\Tnminusone$ to $\CC$ induced from the vector-valued Hurwitz functions $\Phi$ defined in \S \ref{ss:cayley}.

\begin{defn} A {\it Hurwitz map} is a function $\phi: \C \rightarrow \mathbb C$ satisfying the following edge relations:
 for each $e \in E_{\{i\}}(\Delta)$, if $e \leftrightarrow (\{X_1,\cdots,\hat{X}_i,\cdots,X_n\};\{X_i,X'_i\})$ then
 \begin{equation}\label{eqn:edge-phi}
 \phi(X_i) + \phi(X'_i) = \prod_{j\neq i} \phi(X_j).
 \end{equation}
\end{defn}
 \noindent {\bf Convention}: For a fixed Hurwitz map $\phi$ and for each $X\in \C$, use the corresponding lower case letter to denote $\phi(X)$, for example, $x_i=\phi(X_i)$, $y=\phi(Y)$. We will adopt this convention in the rest of this paper.

Equation (\ref{eqn:edge-phi}) above can then be written as
 \begin{equation}\label{eqn:edge}
 x_i+x'_i=\prod_{j\neq i} x_j.
 \end{equation}

 It follows from the edge relations (\ref{eqn:edge-phi}) that a Hurwitz map $\phi$ also satisfies the vertex relations:
 there exists $\mu=\mu(\phi) \in \mathbb C$ such that for every vertex $v=\bigcap_{i=1}^{n}X_i \in V(\Delta)$, where $X_i=[v; \{\hat{i}\}]$,
 \begin{equation}\label{eqn:vertex}
 x_1^2 + \cdots + x_n^2 - x_1\cdots x_n = \mu.
 \end{equation}
 In this case, we also call $\phi$ a $\mu$-Hurwitz map.

The correspondence between the vector-valued Hurwitz maps $\Phi:V(\Delta) \rightarrow \CC^n$ and the Hurwitz maps $\phi: \C \rightarrow \mathbb C$ is as follows: Given a vector-valued Hurwitz map $\Phi: V(\Delta) \rightarrow \CC^n$, we get a Hurwitz map $\phi$ where $\phi(X_i)=\phi([v; \{\hat{i}\}])$ is the $i$-th coordinate of $\Phi(v)$. It is easy to check that this is well-defined, and that the edge relations (\ref{eqn:edgerelation}) implies the edge relations (\ref{eqn:edge}). Conversely, given a Hurwitz map $\phi$ we can get a vector-valued Hurwitz map $\Phi:V(\Delta) \rightarrow \CC^n$ by defining
\[ \Phi(v)=(\phi([v;\{\hat{1}\}]), \cdots, \phi([v;\{\hat{n}\}])) \in \CC^n
\]
and the edge relations (\ref{eqn:edge}) imply the edge relations (\ref{eqn:edgerelation}).

 We denote by $\Hur$ (resp. $\Hur_{\mu}$) the set of all Hurwitz (resp. $\mu$-Hurwitz) maps. Since the vector-valued Hurwitz maps are parametrized by $\CC^n$, $\Hur$ identifies with $\CC^n$ and  $\Hur_{\mu}$ identifies with the variety defined by (\ref{eqn:vertex}).

 \subsection{Dihedral Hurwitz maps}

 A Hurwitz map $\phi$ is called {\it dihedral} if at some (and hence every) vertex $v=X_1\cap \cdots\cap X_n$ where $X_i=[v;\{\hat{i}\}]$,
 $$ x_i=x_j=0 $$
 for some distinct indices $i$ and $j$. In this case we have $\sum_{k\neq i,j}x_k^2=\mu$.

 \subsection{Extended Hurwitz maps}\label{ss:extendedhurwitz}
 We can extend  a Hurwitz map $\phi: \C \rightarrow \mathbb C$ to  maps
 $T^{|k|} (\Delta)\rightarrow \mathbb C$, where  $0 \le k \le n-2$, which we still denote by $\phi$, defined by
 \begin{equation}\label{eqn:phi^k}
 \phi(X_{i_1}\cap \cdots \cap X_{i_{n-k}}) = x_{i_1} \cdots x_{i_{n-k}}.
 \end{equation}
 In particular, we have the extended map $\phi: \A(=\Tabstwo) \rightarrow \mathbb C$ where if $\gamma=[v;\{i,j\}] \in \A_{ij} (= T^{\{i,j\}}(\Delta))$ and $v=X_1 \cap \cdots \cap X_n$, $X_k =[v;\{\hat{k}\}]\in \C_k$, then
\begin{equation}
\phi(\gamma)=\phi([v;\{i,j\}])=\prod_{k \neq i,j} x_k.
\end{equation}
This is precisely the (extended) Hurwitz map $\phi$ from $\A$ to $\CC$ defined in the introduction induced from $\Phi$. Note that $\phi$ takes the value zero for some $X \in \C$ if and only if it takes the value zero for some $\gamma \in \A$.

For convenience, for the rest of this section and the next, we will think of a Hurwitz map $\phi$ as a map from $\C \cup \A \rightarrow \CC$ given by the definitions above. Furthermore, we will mostly be considering Hurwitz maps which take non-zero values, in particular, the dihedral map will usually be excluded.

\subsection{Directed trees}\label{ss:directedtrees} We next show how a Hurwitz map induces a direction for each edge $e \in E(\Delta)$ to give a directed tree.

A {\em directed tree} $\vec{\Delta}$ is a tree $\Delta$ where every edge $e \in E(\Delta)$ is assigned a direction. We mostly consider the case where $\Delta$ is $n$-regular, with $n \ge 3$.




\subsection{Sinks, merges and forks}\label{ss:sinkfm} The vertices of a directed tree can be classified as follows:

 For a directed tree $\vec \Delta$,  a vertex $v$ such that all incident edges are directed towards $v$ is called a {\em sink}. A  vertex $v'$ such that all incident edges but one are directed towards $v'$ is called a {\em merge}. See Figure \ref{fig:4sinkmerge} for examples of a sink and a merge. A vertex $v''$ such that more than one of the incident edges is directed away from $v''$ is called a {\em generalized fork}, or for simplicity, just {\em fork}. See Figure \ref{fig:4forks} for examples of forks.

\begin{figure}[ht]
\centering
\begin{subfigure}{.4\textwidth}
  \centering
  \includegraphics[width=.65\linewidth]{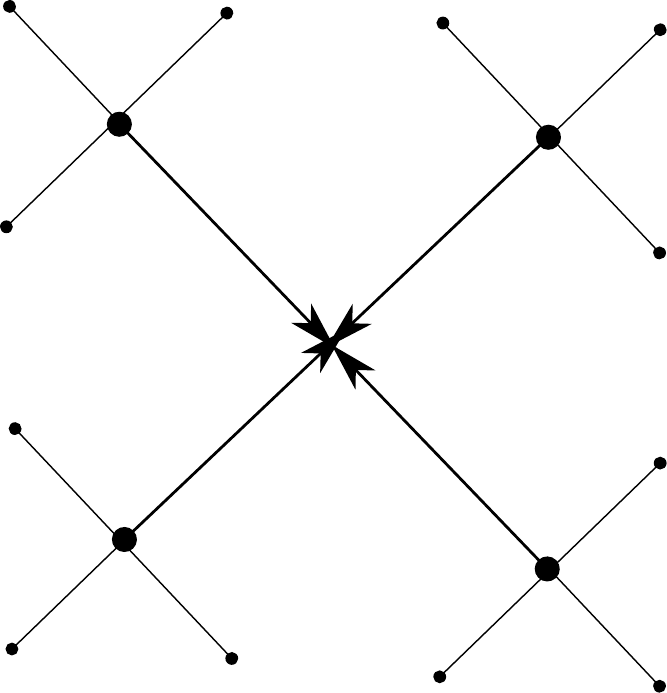}
\end{subfigure}%
\begin{subfigure}{.4\textwidth}
  \centering
  \includegraphics[width=.65\linewidth]{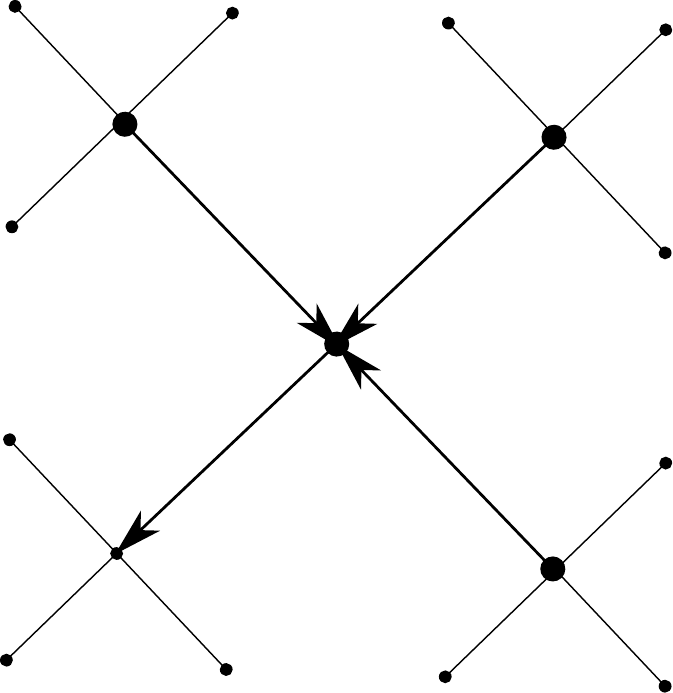}
\end{subfigure}
\caption{Examples of a sink in $V(\Delta)$ (left) and a merge for the case $n=4$}
\label{fig:4sinkmerge}
\end{figure}

\begin{figure}[ht]
\centering
\begin{subfigure}{.33\textwidth}
  \centering
  \includegraphics[width=.65\linewidth]{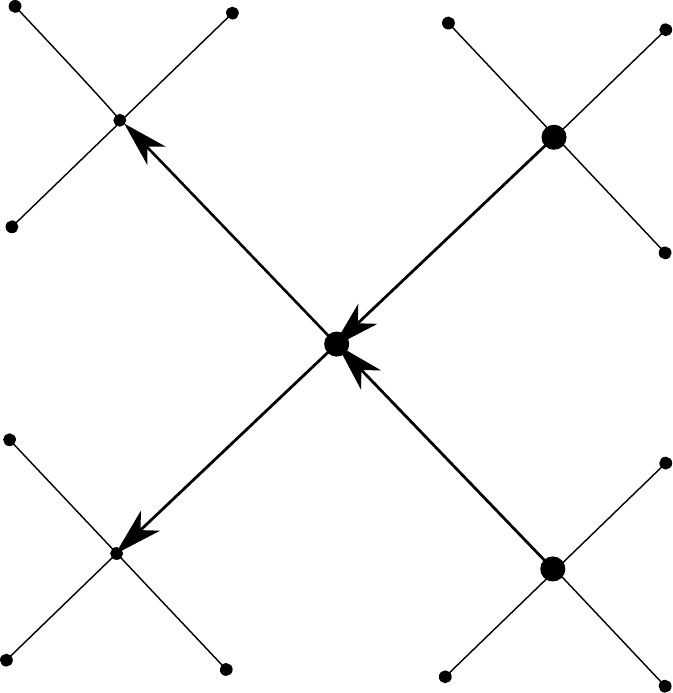}
\end{subfigure}%
\begin{subfigure}{.33\textwidth}
  \centering
  \includegraphics[width=.65\linewidth]{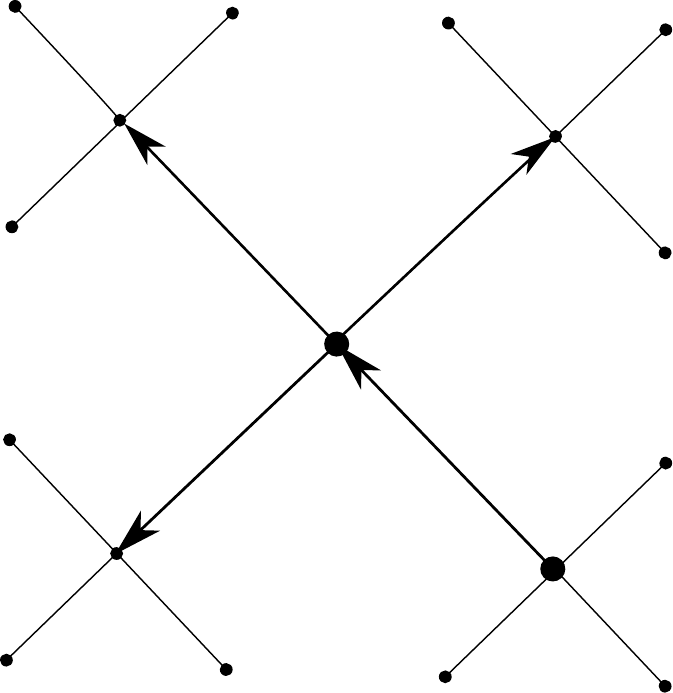}
\end{subfigure}%
\begin{subfigure}{.33\textwidth}
  \centering
  \includegraphics[width=.65\linewidth]{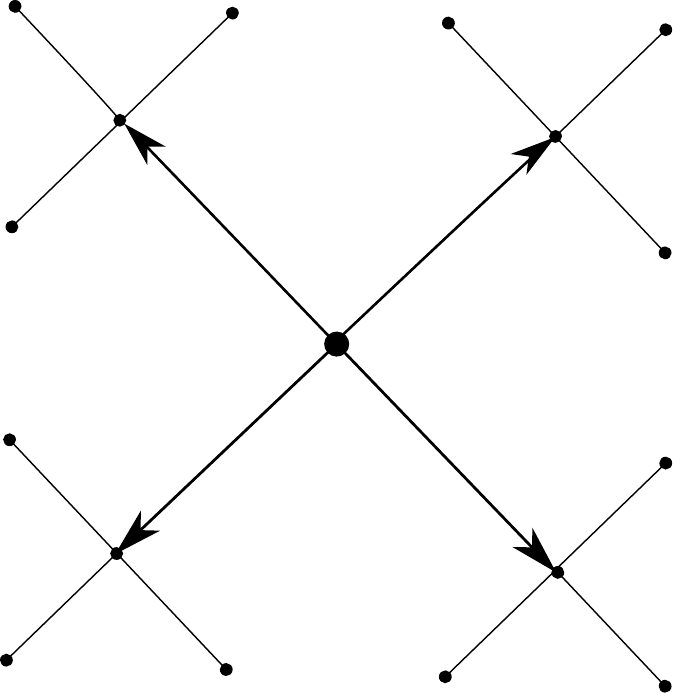}
\end{subfigure}
\caption{Examples of forks for the case $n=4$}
\label{fig:4forks}
\end{figure}
 \subsection{Induced directed tree $\vecDelta$}

 Given a Hurwitz map $\phi$ on $\C$, direct  each edge $e \in E(\Delta)$ where $e \leftrightarrow (\{X_1,\cdots,\hat{X}_i,\cdots,X_n\};\{X_i,X'_i\})$ as follows:

If $|\phi(X_i)|\neq |\phi(X'_i)|$, direct $e$ from $X_i$ to $X'_i$ if $|\phi(X_i)|>|\phi(X'_i)|$;  from $X'_i$ to $X_i$ if $|\phi(X'_i)|>|\phi(X_i)|$. In this case, we say the edge is {\em decisive}. If $|\phi(X_i)|=|\phi(X'_i)|$, direct the edge $e$ arbitrarily, and say $e$ is {\em indecisive}. The resulting directed tree is denoted by $\vec{\Delta}_{\phi}$.

We note that for $\phi$ dihedral, every edge is indecisive, on the other hand, we will show later that for $\phi$ satisfying the Bowditch conditions (\S \ref{ss:Bowditchconditions}), almost every edge is decisive and directed towards a finite subtree $T$ of $\Delta$.

 \subsection{Edge relations for alternating geodesics}\label{ss:relgeodesics}

 Let $e \in E_{\{i\}}(\Delta)$ with incident vertices $v$ and $v'$, and let $j,k \in \setn$ be different from $i$. Let
 $\alpha = [v;\{j,k\}]$, $\beta = [v;\{i,j\}]= [v';\{i,j\}]$, $\gamma = [v;\{i,k\}]= [v';\{i,k\}]$ and $\alpha' = [v';\{j,k\}]$.
 It follows from the edge relation (\ref{eqn:edge-phi}) that the extended Hurwitz map $\phi$ on $\A$ satisfies (Figure \ref{fig:edgerelation2})
 \begin{equation}\label{eqn:path-relation}
 \phi(\alpha) + \phi(\alpha') = \phi(\beta)\phi(\gamma).
 \end{equation}

 \begin{figure}[ht]
 \def\svgwidth{.75\columnwidth}
  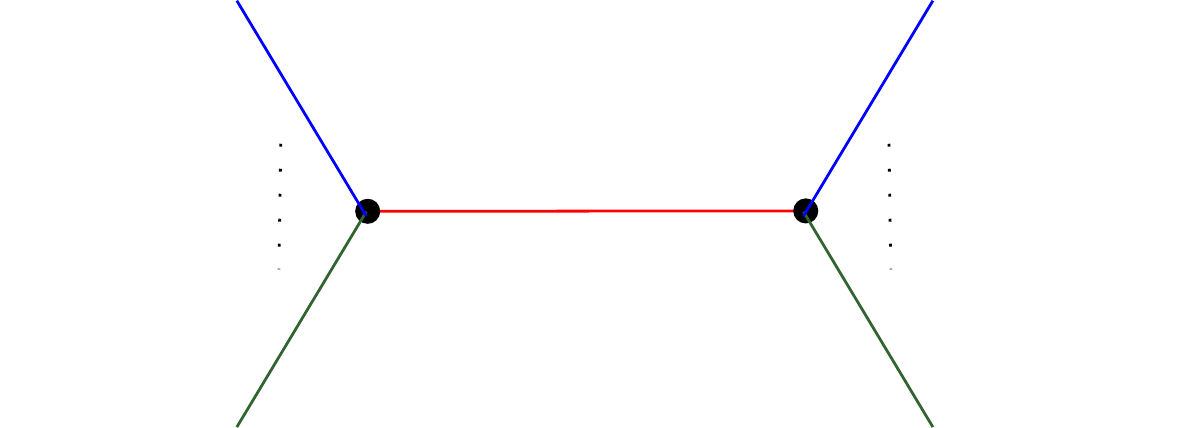
  \caption{The edge relations for alternating geodesics}
  \label{fig:edgerelation2}
 \end{figure}

 \subsection{Edge-connectedness of $\A_{\phi}(K)$}

 We say that a subset $\mathcal P$ of $\A$ is {\it edge-connected} if for every pair of elements $\alpha, \beta \in \mathcal P$,
 $\alpha$ is edge-connected to $\beta$ within $\mathcal P$, that is, there exists a sequence $\gamma_1, \cdots, \gamma_m \in \mathcal P$ such that
 $\gamma_1=\alpha$, $\gamma_m=\beta$,  and each adjacent pair $\gamma_i$ and $\gamma_{i+1}$ share a common edge.

 For a Hurwitz map $\phi$ and $K\ge 0$, we define  $\A_{\phi}(K) \subset \A$ by
 $$ \A_{\phi}(K) = \{ \gamma \in \A \mid |\phi(\gamma)| \le K \}.$$

The following two results  are fundamental to our analysis:


 \begin{lem}\label{lem:2away}
Let $\phi$ be a non-dihedral Hurwitz map from $\C \cup \A$ to $\CC$ and suppose that for $v \in V(\Delta)$, $v=\bigcap_{i=1}^n X_i$ where $X_i=[v;\{\hat{i}\}]$, and $e_i \in E_{\{i\}}(\Delta)$ are the edges incident to $v$, $i=1, \cdots, n$.
\begin{enumerate}
\item [(i)] (Fork Lemma) If the $\phi$-induced arrows on  $e_i$ and $e_j$
 both point away from $v$, then $\alpha :=[v;\{i,j\}] \in \A_{\phi}(2)$;
\item[(ii)] Let $\alpha=[v;\{i,j\}]$, $\beta=[v;\{k,l\}]$, $\gamma=[v;\{i,k\}]$ and $\delta=[v;\{j,l\}]$ where $i,j,k,l \in \setn$ are all distinct. If $\alpha, \beta \in  \A_{\phi}(K)$, then either $\gamma \in \A_{\phi}(K)$ or $\delta \in \A_{\phi}(K)$. In particular, $\alpha$ and $\beta$ are edge-connected within $\A_{\phi}(K)$;
\item[(iii)] Suppose that $\alpha=[v;\{i,j\}] \in \A_{\phi}(K)$, and the $\phi$-induced arrow on $e_k$ (where $k \neq i,j$) points away from $v$ towards the  head $v' \in V(\Delta)$. Then $\beta=[v';\{i,j\}] \in \A_{\phi}(K)$.
\end{enumerate}
 \end{lem}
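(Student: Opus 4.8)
The plan is to read off all three statements at the single vertex $v$. Write $x_m=\phi(X_m)$ for the coordinates at $v$, and for each edge $e_m$ incident to $v$ let $x_m'$ be the $m$-th coordinate at the other endpoint of $e_m$, so that the edge relation (\ref{eqn:edge}) reads $x_m+x_m'=\prod_{\ell\neq m}x_\ell$; by the construction of $\vecDelta$, the induced arrow on $e_m$ points away from $v$ precisely when $|x_m|\ge|x_m'|$. Since $\phi$ is non-dihedral, at most one of $x_1,\dots,x_n$ is zero, and this is the only point at which that hypothesis is needed. The translation $\phi(\gamma)=\prod_{\ell\ne i,j}x_\ell$ for $\gamma=[v;\{i,j\}]$ then turns each hypothesis into an elementary inequality among the $x_m$.

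For (i) I would set $p:=\phi(\alpha)=\prod_{\ell\neq i,j}x_\ell$. Since $\prod_{\ell\neq i}x_\ell=x_j p$ and $\prod_{\ell\neq j}x_\ell=x_i p$, the edge relation gives $x_i'=x_j p-x_i$ and $x_j'=x_i p-x_j$. From $|x_i|\ge|x_i'|$ and the triangle inequality $|x_j p|\le|x_j p-x_i|+|x_i|$ one gets $|x_j|\,|p|\le 2|x_i|$, and symmetrically $|x_i|\,|p|\le 2|x_j|$. If $x_i,x_j\ne 0$, multiplying gives $|p|^2\le 4$; if say $x_i=0$, then non-dihedrality forces $x_j\ne 0$, whence $p=0$. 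Either way $|\phi(\alpha)|\le 2$, so $\alpha\in\A_{\phi}(2)$.

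For (ii) I would factor out $Q:=\prod_{\ell\notin\{i,j,k,l\}}x_\ell$, which gives $\phi(\alpha)=Qx_kx_l$, $\phi(\beta)=Qx_ix_j$, $\phi(\gamma)=Qx_jx_l$, $\phi(\delta)=Qx_ix_k$, and hence $\phi(\gamma)\phi(\delta)=Q^2x_ix_jx_kx_l=\phi(\alpha)\phi(\beta)$. Thus $|\phi(\gamma)|\,|\phi(\delta)|=|\phi(\alpha)|\,|\phi(\beta)|\le K^2$, so at least one of $\gamma,\delta$ lies in $\A_{\phi}(K)$. For the edge-connectedness clause, $\alpha$ and $\gamma$ share the $i$-edge at $v$ and $\gamma,\beta$ share the $k$-edge at $v$, while $\alpha,\delta$ share the $j$-edge and $\delta,\beta$ share the $l$-edge; whichever of $\gamma,\delta$ lies in $\A_{\phi}(K)$ then yields a length-two edge-connected chain from $\alpha$ to $\beta$ inside $\A_{\phi}(K)$. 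For (iii), set $R:=\prod_{\ell\neq i,j,k}x_\ell$; the coordinates at $v$ and at the head $v'$ of $e_k$ agree except in position $k$, so $\phi(\alpha)=x_kR$ and $\phi(\beta)=x_k'R$, and $e_k$ pointing away from $v$ means $|x_k'|\le|x_k|$, whence $|\phi(\beta)|\le|\phi(\alpha)|\le K$ and $\beta\in\A_{\phi}(K)$.

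No step is computationally heavy; the main things to be careful about are the degenerate case in (i) where a coordinate of $\phi$ at $v$ vanishes (the sole use of non-dihedrality) and keeping the orientation convention straight, so that the triangle-inequality estimates in (i) and the comparison in (iii) point in the right direction.
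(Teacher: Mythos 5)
Your proof is correct and follows essentially the same route as the paper's: the same vertex-level translation of the hypotheses, the same product identity for (ii), and the same one-line comparison for (iii). The only cosmetic difference is in (i), where the paper adds the two inequalities to obtain $(|x_i|+|x_j|)(|\phi(\alpha)|-2)\le 0$ and invokes non-dihedrality once, which avoids your separate case split when one of the coordinates vanishes.
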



\begin{proof}

 (i) Without loss of generality, assume $e_1$ and $e_2$ are directed away from $v$ by $\phi$ with heads $v'$ and $v''$ respectively. Together with the edge relation (\ref{eqn:edge}), we have $$ 2|x_1| \ge |x_1+x_1'|= |x_2 x_3 \cdots x_n|,$$  $$ 2|x_2| \ge |x_2+x_2'|=|x_1 x_3 \cdots x_n|,$$ where $x_1'$ and $x_2'$ are the $\phi$ values of $[v';\{\hat{1}\}]$ and $[v'';\{\hat{2}\}]$.  Therefore
\[
(|x_1|+|x_2|)(|x_3 \cdots x_n|-2) \le 0.
\]
Since  $\phi$ is not dihedral, $|x_1|+|x_2| >0$,  so $|x_3 \cdots x_n| \le 2$, thus $[v;\{1,2\}] \in \A_{\phi}(2)$.

 (ii) Since $\alpha,\beta \in \A_{\phi}(K)$, we have
$$|\prod_{t \neq i,j} x_t| \le K, \qquad |\prod_{t \neq k,l} x_t| \le K.$$ It follows that either
$$|\prod_{t \neq i,k} x_t| \le K,\quad  \hbox{or} \quad |\prod_{t \neq j,l} x_t| \le K.$$ Hence, either $\gamma=[v; \{i,k\}] \in \A_{\phi}(K)$ or $\delta=[v; \{j,l\}] \in \A_{\phi}(K)$.

 (iii) Writing $x_k'=\phi([v';\{\hat{k}\}])$, we have $$|\phi(\beta)|=|x_k'\prod_{t \neq i,j,k}x_t| \le |\prod_{t \neq i,j}x_t|=|\phi(\alpha)| \le K,$$
hence $\beta \in \A_{\phi}(K)$.
 \end{proof}

 \begin{prop}\label{prop:e-c}
 For any non-dihedral  Hurwitz map $\phi$  and any $K \ge 2$, the subset $\A_{\phi}(K)$ is edge-connected.
 \end{prop}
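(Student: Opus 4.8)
The plan is to show that $\A_\phi(K)$ is edge-connected by reducing an arbitrary pair $\alpha,\beta \in \A_\phi(K)$ to the already-established local statement in Lemma~\ref{lem:2away}(ii), which handles the case when $\alpha$ and $\beta$ share a common vertex and have "complementary" color pairs. The key point is to interpolate between $\alpha$ and $\beta$ by a chain of elements of $\A$ each of which I can verify lies in $\A_\phi(K)$, using only the multiplicative structure of $\phi$ on $\A$ (namely that $\phi([v;\{i,j\}]) = \prod_{k\neq i,j} x_k$ where $(x_1,\dots,x_n)=\Phi(v)$).

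First I would fix $\alpha,\beta\in\A_\phi(K)$ and introduce the vertices $v_\alpha,v_\beta\in V(\Delta)$ that are the roots (vertices closest to $v_0$) of $\alpha,\beta$ respectively, though in fact any choice of representative vertices will do. I proceed in two stages. \emph{Stage 1 (move $\alpha$ to a geodesic through $v_\beta$).} Let $\gamma=[v_\alpha;\{i,j\}]=\alpha$ and let $p$ be the geodesic path in $\Delta$ from $v_\alpha$ to $v_\beta$. I would walk along $p$ one edge at a time: if the current edge at the current vertex $v$ is a $k$-edge with $k\neq i,j$, then by Lemma~\ref{lem:2away}(iii) (applied in whichever direction the $\phi$-induced arrow on that edge happens to point — note (iii) is stated for an arrow pointing away from $v$, but one can instead re-root: if it points toward $v$, apply (iii) at the other endpoint to conclude the geodesic through $v$ is controlled by the one through the far vertex, hence conversely) the $\{i,j\}$-geodesic through the next vertex is again in $\A_\phi(K)$ and moreover shares the $k$-edge's two $\{i,j\}$-geodesics... — here I must be slightly careful: two $\{i,j\}$-geodesics through the two endpoints of a $k$-edge ($k\neq i,j$) are in fact the \emph{same} alternating geodesic, so no new element is needed. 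If instead the edge of $p$ is an $i$- or $j$-edge, the $\{i,j\}$-geodesic is unchanged (it contains that edge). Thus, tracking colors along $p$, after finitely many applications of (iii) I obtain an element $\alpha'=[v_\beta;\{i,j\}]\in\A_\phi(K)$ that is edge-connected to $\alpha$ within $\A_\phi(K)$ and passes through $v_\beta$.

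\emph{Stage 2 (change colors at a common vertex).} Now $\alpha'=[v_\beta;\{i,j\}]$ and $\beta=[v_\beta;\{k,l\}]$ both lie in $\A_\phi(K)$ and share the vertex $v_\beta$. If $\{i,j\}\cap\{k,l\}\neq\emptyset$, say $j=k$, then one argues directly: from $|\prod_{t\neq i,j}x_t|\le K$ and $|\prod_{t\neq j,l}x_t|\le K$ with $n\ge 3$ — actually when the pairs overlap I would instead invoke (ii) with an auxiliary distinct-color geodesic, or handle it by noting $[v_\beta;\{i,l\}]$ satisfies $|\phi|\le K$ too since $|\prod_{t\neq i,l}x_t|\cdot|\prod_{t\neq i,j}x_t| = |x_j|\cdot|\prod_{t\neq i,l}x_t|\cdot\cdots$; the clean route is: at least one of $[v_\beta;\{i,l\}]$ has $\phi$-value bounded by $\max$ of the two, then these three pairwise share an edge through $v_\beta$. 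If $\{i,j\}$ and $\{k,l\}$ are disjoint, this is precisely Lemma~\ref{lem:2away}(ii), giving a two- or three-term edge-connected chain. In all cases $\alpha'$ is edge-connected to $\beta$ within $\A_\phi(K)$, and composing with Stage 1 finishes the proof.

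The main obstacle I anticipate is Stage~1: making precise the claim that sliding an $\{i,j\}$-geodesic along the geodesic $p$ via Lemma~\ref{lem:2away}(iii) produces an edge-connected sequence, since (iii) as stated requires the auxiliary edge $e_k$ to point \emph{away} from the current vertex, whereas along $p$ the $\phi$-arrows may point either way. The resolution is that edge-connectedness is symmetric, so it suffices to know that the $\{i,j\}$-geodesics through the two endpoints of any $k$-edge ($k\neq i,j$) are \emph{both} in $\A_\phi(K)$ whenever one of them is — and by applying (iii) at whichever endpoint the arrow points away from, together with the observation that these two geodesics actually coincide as subtrees (they differ only by the choice of base vertex), no genuine "jump" occurs at such edges at all; the only color changes happen at $i$- and $j$-edges, where again the geodesic is literally unchanged. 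Hence the entire Stage~1 chain degenerates to the single statement that $\alpha'=[v_\beta;\{i,j\}]$, as a subtree of $\Delta$, equals $\alpha$ — wait, that is false in general — so the correct formulation is: along $p$, each maximal run of $i$/$j$-edges keeps the geodesic fixed, and each $k$-edge transition ($k\neq i,j$) moves from $[v;\{i,j\}]$ to $[v';\{i,j\}]$, a genuinely different element, and Lemma~\ref{lem:2away}(iii) (applied in the correct orientation, using symmetry of edge-connectedness) guarantees it stays in $\A_\phi(K)$ and shares the vertex-neighborhood data needed to be edge-connected to the previous one. Pinning down exactly which shared edge witnesses edge-connectedness at each such transition is the one routine-but-essential bookkeeping step.
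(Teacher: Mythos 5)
Your Stage 1 contains the essential gap. Lemma \ref{lem:2away}(iii) transports membership in $\A_{\phi}(K)$ only \emph{in the direction of the $\phi$-arrow}: if the $k$-edge from $v$ to $v'$ is directed towards $v$, then knowing $[v;\{i,j\}]\in\A_{\phi}(K)$ tells you nothing about $[v';\{i,j\}]$ (indeed $|\phi([v';\{i,j\}])|=|x_k'\prod_{t\neq i,j,k}x_t|$ with $|x_k'|\ge|x_k|$, so the value can only grow and may exceed $K$). Your proposed fixes do not work: the two geodesics $[v;\{i,j\}]$ and $[v';\{i,j\}]$ do \emph{not} coincide as subtrees (they lie in different components of $\Delta$ minus the $k$-edges), and ``symmetry of edge-connectedness'' cannot reverse a one-directional implication about membership in $\A_{\phi}(K)$. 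This is precisely why the paper argues by contradiction with a pair $\alpha,\beta$ at \emph{minimal} distance among non-edge-connected pairs: then the first and last edges of the connecting path must point inward (else Lemma \ref{lem:2away}(iii) produces a strictly closer counterexample), forcing a fork on the path, and the Fork Lemma \ref{lem:2away}(i) yields an element of $\A_{\phi}(2)$ strictly closer to both, contradicting minimality. A direct ``slide along the path'' argument cannot get off the ground because the arrows along $p$ need not cooperate.

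There is a second, independent gap: even at a $k$-edge where the arrow does point forward, so that both $[v;\{i,j\}]$ and $[v';\{i,j\}]$ lie in $\A_{\phi}(K)$, these two alternating geodesics are disjoint and share no edge. To edge-connect them within $\A_{\phi}(K)$ you must show that one of the intermediate geodesics $[v;\{i,k\}]$ or $[v;\{j,k\}]$ (which do contain the $k$-edge) also lies in $\A_{\phi}(K)$; this requires the edge relation \eqref{eqn:path-relation}, $\phi(\alpha)+\phi(\alpha')=\phi(\beta)\phi(\gamma)$, giving $|\phi(\beta)\phi(\gamma)|\le 2K\le K^2$, which is exactly where the hypothesis $K\ge 2$ enters. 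Your proposal never invokes the edge relation and never uses $K\ge 2$, so it cannot be complete. (Your Stage 2 is essentially fine: the disjoint-color case is Lemma \ref{lem:2away}(ii), and the overlapping-color case is trivial since the two geodesics already share the edge of the common color at $v_\beta$.)
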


 \begin{proof}
 We prove by contradiction.
 Suppose that $\A_{\phi}(K)$ is not edge-connected. Then
 there exist $\alpha, \beta \in \A_{\phi}(K)$ such that $\alpha$ cannot be edge-connected to $\beta$ within $\A_{\phi}(K)$ and the distance $m$ between $\alpha$ and $\beta$ is minimal within all such pairs.
 Let the unique shortest path from $\alpha$ to $\beta$ be denoted by the vertex-edge sequence $\{ v_0$, $e_1$, $v_1$, $e_2$, $v_2, \cdots, e_m$, $v_m\}$
 where $v_0 \in V(\alpha)$ and $v_m \in V(\beta)$. We  consider the cases $m=0$, $m=1$ and $m>1$ separately.

 Case $m=0$: Then $\alpha$ and $\beta$ share a common vertex $v_0$.
However, by Lemma \ref{lem:2away} (ii), $\alpha$ and $\beta$ are edge connected within $\A_{\phi}(K)$ via $\gamma$ or $\delta$, a contradiction.

 Case $m=1$: Without loss of generality, we may assume that $e_1 \in E_{\{1\}}(\Delta)$ so $e_1 \leftrightarrow (\{X_2, \cdots,X_n\};\{X_1,X_1'\})$ and that $e_1$ is directed away from $v_0$, hence $|x_1| \ge |x_1'|$. By \ref{lem:2away} (iii), $\alpha':=[v_1;\{i,j\}] \in \A_{\phi}(K)$. Now, since $\alpha'$ and $\beta$ share a common vertex,  by Lemma \ref{lem:2away} (ii), $\alpha'$ and $\beta$ are edge connected within $\A_{\phi}(K)$. Let  $\gamma =[v_0;\{1,i\}]$ and $\delta =[v_0;\{1,j\}]$. By the  edge relation (\ref{eqn:path-relation}), we have $\phi(\alpha) + \phi(\alpha') = \phi(\gamma) \phi(\delta)$. It follows that either $|\phi(\gamma)| \le K$ or $|\phi(\delta)| \le K$, (we use here the fact that $K \ge 2$). Thus $\alpha$ is edge-connected to $\alpha'$ within $\A_{\phi}(K)$, and hence edge-connected to $\beta$ within $\A_{\phi}(K)$, a contradiction.


 Case $m>1$: We claim that $e_1$ is directed towards $v_0$. Otherwise, writing $\alpha=[v_0;\{i,j\}]$, by Lemma \ref{lem:2away} (iii), we have $\alpha'=[v_1;\{i,j\}] \in \A_{\phi}(K)$. Then, since $\alpha$ and $\beta$ are not edge connected within $\A_{\phi}(K)$,  $\alpha'$ is not edge connected to either $\alpha$ or $\beta$ within $\A_{\phi}(K)$. However, its distance from both $\alpha$ and $\beta$ is strictly less than $m$,  contradicting the minimality of $m$. Similarly, $e_m$ must be directed towards $v_{m}$. It follows that $v_k$ is a fork for some $1 \le k \le m-1$, hence by Lemma \ref{lem:2away} (i), $\gamma=[v_k;\{s,t\}] \in \A_{\phi}(2) \subset \A_{\phi}(K)$ where $e_{k-1} \in E_{\{s\}}(\Delta)$ and $e_{k} \in E_{\{t\}}(\Delta)$. Repeating the previous argument about minimality of $m$ produces the required contradiction. This completes the proof.
\end{proof}

 \subsection{Circular sets}\label{ss:circular} Given a finite subtree $T$ of $\Delta$, we define the {\em circular set of $T$}, $C(T) \subset \vec{E}(\Delta)$ as follows:  $\vec{e}\in C(T)$ if and only if
 the underlying edge $e$ meets $T$ in a single point, that point being the head of $\vec{e}$.  A simple example of a circular set is $C(\{v\})$, the circular set of a vertex $v$,  which consists of the $n$ directed edges all of whose heads are $v$. Also, if $\Delta_n \subset \Delta$ is the subtree spanned by the vertices $v \in V(\Delta)$ such that $d(v) \le n$, then $C(\Delta_n)$ is the set of edges $\vec e \in \vec E(\Delta)$ such that $d(\vec e)=n$ and $d(\mathrm{Head}(\vec e))=n$.

 \subsection{Subsets $\A^{0}(e)$, $\A^{-}(\vec{e})$ and $\A^{0-}(\vec{e})$ of $\A$}

 Given an edge $e \in E(\Delta)$, we define
$$\A^{0}(e)=\{ \gamma \in \A ~|~ e \in E(\gamma)\},$$
that is, the set consisting of those alternating geodesics containing $e$. Note that $|\A^0(e)|=n-1$.
 Given a directed edge $\vec{e} \in \vec{E}(\Delta)$, $\Delta \setminus \{e\}$ consists of two connected subtrees of $\Delta$. Let $\Delta^{-}(\vec{e})$ (respectively $\Delta^{+}(\vec{e})$)  be the   subtree of $\Delta$ that contains the tail (respectively the head) of $\vec{e}$.
 We define
\[\A^{-}(\vec{e}):=\{\gamma \in\A ~|~ \gamma \subset \Delta^{-}(\vec{e})\}, \quad \A^{+}(\vec{e}):=\{\gamma \in\A ~|~ \gamma \subset \Delta^{+}(\vec{e})\}.
\]
 We see that $\A=\A^{-}(\vec{e}) \sqcup \A^{0}(e)\sqcup \A^{+}(\vec{e})$.
 Furthermore, we write $\A^{0-}(\vec{e})$ for $\A^{0}(e)\sqcup \A^{-}(\vec{e})$.

 More generally, suppose $C=C(T) \subset \vec{E}(\Delta)$ is a circular set of directed edges where $T$ is a finite subtree of $\Delta$.
 Let $\A^{0}(C)=\bigcup_{\vec{e}\in C}\A^{0}(e)$, that is, the set of alternating geodesis meeting $C$ in at least one edge.
 Then $\A$ can be written as the disjoint union of $\A^{0}(C)$ and $\A^{-}(\vec{e})$ as $\vec{e}$ varies in $C$.

 \subsection{The Fibonacci function $F$ on $\A$}\label{ss:Fibonacci}

 We define the Fibonacci function $$F:=F_{v_0}: \A \rightarrow \mathbb N$$ relative to the root $v_0 \in V(\Delta)$ recursively by induction on $d(\gamma)$.
 First, define $F(\gamma)=1$ for every $\gamma \in \A$ where $d(\gamma)=0$, that is, if $v_0 \in V(\gamma)$.
 Next, if $d(\gamma)=s>0$, let $\gamma=[v^*;\{i,j\}]$ where $v^*\in V(\gamma)$ is the unique vertex closest to $v_0$ and let $e_k\in E_{\{k\}}(\Delta)$ be the unique edge incident to $v^*$ which lies on the geodesic  from $v^*$ to $v_0$, see Figure \ref{fig:fibonaccidefi}. Then $k \neq i,j$. Let $\alpha=[v^*; \{i,k\}]$ and $\beta=[v^*; \{j,k\}]$. By construction, $d(\alpha)<s$ and $d(\beta)<s$. We define
 $$ F(\gamma)=F(\alpha)+F(\beta). $$

 \begin{figure}[ht]
\centering
\def\svgwidth{.6\columnwidth}
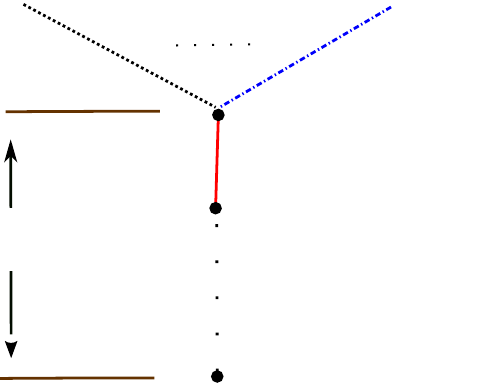
\caption{Alternating geodesics $\gamma, \alpha, \beta$ containing $v^*$ with $d(\alpha), d(\beta)<d(\gamma)$.}
\label{fig:fibonaccidefi}
\end{figure}

Since $v^*$ and $e_k$ are uniquely determined when $s>0$, $F$ is well defined for all $\gamma \in \A$. (Note that in \cite{bowditch1998plms, tan-wong-zhang2008advm}, the Fibonacci function was defined relative to an edge as opposed to  a vertex, there is no essential difference but since we are dealing with rooted trees here, we choose to define $F$ relative to  a vertex.)
We may define the Fibonacci function $F_v$ relative to any other vertex $v \in V(\Delta)$. It is easy to prove (see for example \cite{tan-wong-zhang2008advm}) that there exists a constant $k \ge 1$ depending only on $v$ and $n$ such that
 \begin{equation}\label{eqn:Fibonacciv}
 \frac{1}{k} F(\gamma) \le F_{v}(\gamma) \le k F(\gamma).
 \end{equation}
As we will only be interested in the growth rates of functions up to a multiplicative constant, we see that it does not matter which vertex is used to define the Fibonacci function.


\smallskip

 \subsection{ Sierpinski simplices}\label{ss:sierpinski} We now give a geometric interpretation for the Fibonacci function which is of independent interest. It will  also allow us to bound the multiplicity function $\mu:\NN \rightarrow \NN$ which counts the number of times a number $m$ occurs as $F(\gamma)$, $\gamma \in \A$. Let $\{\vec e_1, \cdots , \vec e_n\}=C(\{v_0\})$ be the circular set of $\{v_0\}$. By symmetry, the Fibonacci function looks the same for each of $\A^{0-}(\vec e_i)$, the set of alternating geodesics either passing through $e_i$ or contained in the subtree obtained by removing $e_i$ and containing the tail of $\vec e_i$. So, without loss of generality we may just look at $\A^{0-}(\vec e_n)$. We will define a function $$S: \A^{0-}(\vec e_n) \rightarrow {\Z}^{n-1}$$ as follows:
For $\gamma_i=[v_0; \{i,n\}] \in \A^0(\vec e_n)$, $i=1, \cdots, n-1$, define $S(\gamma_i)=\mathbf{e}_i=(0,\cdots, 1,\cdots, 0)\in{\mathbb Z}^{n-1}$, the standard basis vector with 1 in the $i$th entry and 0 elsewhere. Now for $\gamma \in \A^{-}(\vec e_n)$, define
\begin{equation}\label{eqn:Sierpinski}
S(\gamma)=S(\alpha)+S(\beta)
\end{equation}
where $\alpha, \beta$ are related to $\gamma$ as in the definition of $F$.
Clearly, by construction, $F(\gamma)$ equals the sum of the entries of $S(\gamma)$ for $\gamma \in \A^{0-}(\vec e_n)$. Composing with the projective map $p: \RR^{n-1}\backslash\{\mathbf{0}\} \rightarrow \RPnminustwo$, the image of $\A^{0-}(\vec e_n)$ under $p \circ S$ in $\RPnminustwo$  consists  precisely of the vertices of a projective Sierpinski $(n-2)$-simplex where in the construction of the simplex, each edge is subdivided according to the rule given by (\ref{eqn:Sierpinski}). See Figure \ref{fig:4Sierpinski} for the case $n=4$ and the first few iterations. In particular,  $p \circ S(\gamma)$ is in the convex hull of $p \circ S(\alpha)$ and $p \circ S(\beta)$. Thus, $p \circ S$ and hence  $S$ are both injective.

Using the standard topology on $\RPnminustwo$ and the injection of $\A$ into $\RPnminustwo$ by $p \circ S$, we can consider the closure $\overline{p \circ S(\A)} \subset \RPnminustwo$ which is the projective Sierpinski $(n-2)$-simplex.

To obtain all of $\A$, we take $n$-copies of the projective Sierpinski $(n-2)$-simplices constructed, and identify the boundary vertices in pairs, coming from each $[v_0, \{i, j\}]$ which occurs as the boundary vertex of a pair of simplices.
This set can be identified with $\mathrm{End}(\Delta)/\sim$, where  $\mathrm{End}(\Delta)$ is the set of ends of $\Delta$. The ends of $\Delta$ can be represented by infinite geodesic rays emanating from $v_0$. Two elements of $\mathrm{End}(\Delta)$ represented by two such infinite geodesic rays $\alpha$ and $\beta$ are equivalent if both eventually converge to the same $\gamma \in \A$, that is, the symmetric difference $\alpha \triangle \beta=\gamma$. The rational ends correspond to the elements of $\A$ and the irrational ends correspond to the those paths which do not limit to any $\gamma \in \A$.

For a given Hurwitz map $\phi$, it would be interesting to study  the set of end invariants as defined in \cite{tan-wong-zhang2008ajm}, this says something about the dynamics of the action of $\Gamma_n$ on $\CC^n$. We plan to do this in a future project. Another interesting direction for further investigations is to look for similar underlying geometric structures for the sets of other regular subtrees $T^{|k|}(\Delta)$ where $1 \le k\le n-1$.

\begin{figure}[ht]
\centering
\includegraphics[width=0.9\linewidth]{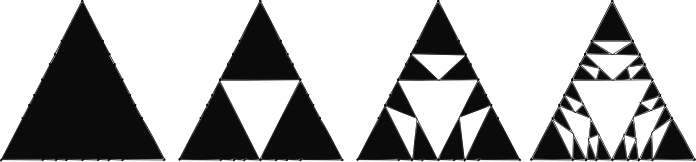}
\caption{The Sierpinski triangle}
\label{fig:4Sierpinski}
\end{figure}

 The multiplicity function $\mu:\NN \rightarrow \NN$ where $\mu(m)= \#\{\gamma \in \A~|~F(\gamma)=m\}$  seems to be an interesting function from a number theoretic and combinatorial point of view. When $n=3$, $\mu(m)=3\varphi(m)$ where $\varphi$ is the Euler's totient function. It would be interesting to obtain a closed form for this function for $n>3$. However, for our purposes, it suffices to find an upper bound for $\mu(m)$. We first note that $\mu(1)=\frac{1}{2}n(n-1)$ since this is the number of alternating geodesics passing through $v_0$. For $m \ge 2$, we have the following facts:
\begin{enumerate}
\item[(i)] the function $F$ is symmetric on the branches $\A^{0-}(\vec e_i)$;
\item[(ii)]  $S$ is injective;
\item[(iii)]  $F(\gamma)$ is given by the sum of the entries of $S(\gamma)$;
\item[(iv)] the entries of $S(\gamma)$ are non-negative integers less than $m$ (so $m$ choices) since at least two entries are non-zero;
\item[(v)] we only need to know the first $n-2$ entries of $S(\gamma)$ since the sum of the entries is $m$ (so $m^{n-2}$ choices).
\end{enumerate}
From the above, we easily deduce the following, which allows us to get the absolute convergence of certain series:

 \begin{lem}\label{lem:countgamma}
For $m \ge 2$, the multiplicity function $\mu$ satisfies $\mu(m) \le n m^{n-2}$.

 \end{lem}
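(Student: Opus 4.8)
The plan is to assemble the five observations (i)--(v) recorded immediately above the statement; essentially no work beyond them is needed. First I would explain the role of the hypothesis $m \ge 2$ and set up a branch decomposition of $\A$. By definition of the Fibonacci function, $F(\gamma) = 1$ whenever $v_0 \in V(\gamma)$, so every $\gamma$ with $F(\gamma) = m \ge 2$ has $d(\gamma) > 0$ and hence avoids the root. Writing $C(\{v_0\}) = \{\vec e_1, \dots, \vec e_n\}$ for the circular set of $v_0$, such a $\gamma$ lies in exactly one branch $\A^-(\vec e_i)$, and since $\A^{0-}(\vec e_i) = \A^0(e_i) \sqcup \A^-(\vec e_i)$ with every element of $\A^0(e_i)$ passing through $v_0$ (hence of $F$-value $1$), we get for $m \ge 2$
\[
\{\gamma \in \A^{0-}(\vec e_i) : F(\gamma) = m\} = \{\gamma \in \A^-(\vec e_i) : F(\gamma) = m\},
\]
and as $i$ ranges over $1, \dots, n$ these sets partition $\{\gamma \in \A : F(\gamma) = m\}$. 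By the symmetry of $F$ across the branches (fact (i)) all $n$ of them have the same cardinality, so $\mu(m) = n \cdot \#\{\gamma \in \A^{0-}(\vec e_n) : F(\gamma) = m\}$.

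Next I would bound the count on the right using the Sierpinski map $S \colon \A^{0-}(\vec e_n) \to \Z^{n-1}$, which is injective (fact (ii)), takes values in $\NN^{n-1}$ (true of the base values $S(\gamma_i) = \mathbf{e}_i$ and preserved by the recursion $S(\gamma) = S(\alpha) + S(\beta)$), and for which $F(\gamma)$ equals the sum of the coordinates of $S(\gamma)$ (fact (iii)). A short induction on $d(\gamma)$ then shows $S(\gamma)$ has at least two non-zero coordinates once $d(\gamma) \ge 1$: for $d(\gamma) = 1$ one computes directly $S(\gamma) = \mathbf{e}_i + \mathbf{e}_j$ with $i \ne j$, while for $d(\gamma) \ge 2$, either one of the summands $S(\alpha), S(\beta)$ already has two non-zero coordinates by induction (and the other is non-negative), or both are standard basis vectors, necessarily distinct since $\alpha \ne \beta$. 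Consequently, if $F(\gamma) = m \ge 2$ then every coordinate of $S(\gamma)$ lies in $\{0, 1, \dots, m-1\}$ (fact (iv)), and since the $n-1$ coordinates sum to $m$ the vector is determined by its first $n-2$ coordinates, giving at most $m^{n-2}$ possible values (fact (v)). Injectivity of $S$ then bounds the number of such $\gamma$ by $m^{n-2}$, and combined with the first paragraph this gives $\mu(m) \le n m^{n-2}$.

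The real content lies entirely in facts (i)--(v) --- the symmetry of the Fibonacci function on the $n$ branches and the injectivity of $S$ read off from the projective Sierpinski picture --- so the argument is mostly careful bookkeeping. The step I expect to be the main (if small) obstacle is the assertion that $S(\gamma)$ has at least two non-zero coordinates whenever $F(\gamma) \ge 2$: the crucial bound ``every coordinate is $< m$'' rests on it, and it is the only point that needs an actual induction rather than a quotation of a listed fact. The other thing requiring attention is verifying in the first paragraph that the $n$ branch sets genuinely partition $\{\gamma : F(\gamma) = m\}$ for $m \ge 2$, so that the factor $n$ is exact and not an overcount.
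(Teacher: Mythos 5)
Your proof is correct and follows exactly the route the paper intends: it assembles facts (i)--(v) listed just before the lemma, with the branch decomposition supplying the factor $n$ and the injectivity of $S$ together with the coordinate bounds supplying the factor $m^{n-2}$. The only addition is your explicit induction showing $S(\gamma)$ has at least two non-zero entries, which is precisely the justification the paper leaves implicit in fact (iv), so the two arguments coincide.
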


 \begin{prop}\label{prop:fibconvergence}
 The infinite sum $\sum_{\gamma \in \A}(F(\gamma))^{-s}$ is convergent for $s > n-1$.
 \end{prop}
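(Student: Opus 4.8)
The plan is to estimate the sum by grouping the alternating geodesics according to the value of the Fibonacci function $F$, and to use the multiplicity bound of Lemma \ref{lem:countgamma} together with the exponential growth of the number of vertices at a given distance from the root. First I would write
\[
\sum_{\gamma \in \A} (F(\gamma))^{-s} = \sum_{m=1}^{\infty} \mu(m) \, m^{-s},
\]
which is legitimate since $F$ takes values in $\NN$ and $\mu(m)$ counts exactly the number of $\gamma \in \A$ with $F(\gamma)=m$. The finiteness of each $\mu(m)$ is guaranteed by Lemma \ref{lem:countgamma} (and for $m=1$ by the explicit count $\mu(1) = \tfrac12 n(n-1)$), so the rearrangement of the nonnegative series is valid regardless of convergence.

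Next I would apply the bound $\mu(m) \le n m^{n-2}$ from Lemma \ref{lem:countgamma}, valid for $m \ge 2$, to get
\[
\sum_{m=2}^{\infty} \mu(m) \, m^{-s} \le n \sum_{m=2}^{\infty} m^{n-2-s}.
\]
The series $\sum_m m^{n-2-s}$ is a $p$-series which converges precisely when $s - (n-2) > 1$, i.e.\ when $s > n-1$. Adding the single finite term $\mu(1) \cdot 1^{-s} = \tfrac12 n(n-1)$ does not affect convergence, so $\sum_{\gamma \in \A}(F(\gamma))^{-s}$ converges for all $s > n-1$, as claimed.

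There is essentially no obstacle here: the entire content has been front-loaded into Lemma \ref{lem:countgamma}, whose proof in turn rests on the injectivity of the Sierpinski map $S$ and the fact that $F(\gamma)$ is the sum of the (nonnegative, integer) entries of $S(\gamma)$ with at least two of them positive. If one wanted a self-contained argument one could instead bound $\mu(m)$ crudely: every $\gamma$ with $F(\gamma) = m$ has $d(\gamma)$ bounded linearly in $m$ (since $F$ grows at least linearly along any nested sequence of alternating geodesics and roughly like the Fibonacci sequence in the worst case, one in fact gets $d(\gamma) = O(\log m)$), and the number of alternating geodesics at distance $\le D$ from $v_0$ grows exponentially in $D$; combining these gives a polynomial bound on $\mu(m)$, which again suffices for convergence for $s$ large, though the sharp threshold $s > n-1$ really does need the $m^{n-2}$ bound. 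I would present the clean version using Lemma \ref{lem:countgamma} directly.
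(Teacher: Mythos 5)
Your proof is correct and is essentially identical to the paper's: both group the sum by the value of $F$, invoke the bound $\mu(m)\le n m^{n-2}$ from Lemma \ref{lem:countgamma} (plus the explicit $\mu(1)=\tfrac12 n(n-1)$), and conclude by comparison with the $p$-series $\sum_m m^{n-2-s}$, which converges exactly when $s>n-1$.
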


 \begin{proof}
 By Lemma \ref{lem:countgamma}, we see that
 \begin{eqnarray*}
 \sum_{\gamma\in\A}(F(\gamma))^{-s}
 \le \frac{1}{2}n(n-1)+n  \sum_{m=2}^{\infty}m^{n-2-s} < +\infty
 \end{eqnarray*}
 since $s>n-1$.
 \end{proof}

 \subsection{Growth along an alternating geodesic}\label{ss:growth}

 Let $\gamma$ be an alternating $\{i,j\}$-geodesic with vertex set $\{v_m\}$  and edge set $\{e_m\}$ where $e_m$ is incident to $v_m$ and $v_{m+1}$. Further, for $m \in \Z$,  suppose that $e_{2m+1} \in E_{\{i\}}(\Delta)$,  $e_{2m} \in E_{\{j\}}(\Delta)$, see Figure \ref{fig:alternatingray}.

\begin{figure}[ht]
\centering
\def\svgwidth{\columnwidth}
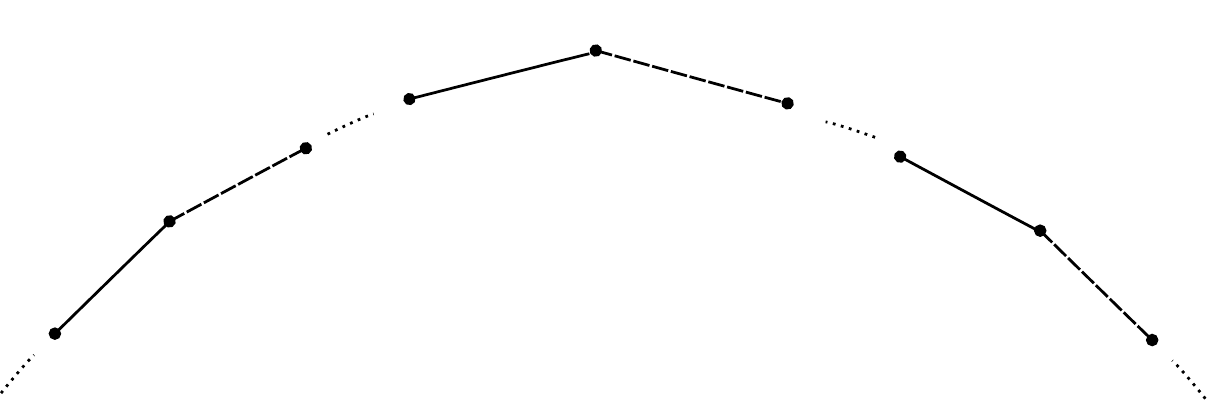
\caption{An alternating geodesic $\gamma$}
\label{fig:alternatingray}
\end{figure}

Let $Y_m \in \C$ such that
$$Y_m \cap \gamma=\bar{e}_m, \quad m\in \Z.$$
Specifically,
\[Y_{2m+1}=[v_{2m+1};\{\hat{j}\}]=[v_{2m+2};\{\hat{j}\}] \in \C_j,
\]
\[Y_{2m}=[v_{2m};\{\hat{i}\}]=[v_{2m+1};\{\hat{i}\}] \in \C_i, \quad m \in \Z.
\]

Recall the definition of the (extended) Hurwitz function $\phi$ and the square-sum weight $\sigma$ on $\gamma \in \A$ as follows: Let $v_0=\cap_{m=1}^n X_k$, $X_k \in \C_k$, $k=1, \cdots, n$:

 \[
 \phi(\gamma)=x:=\prod_{k \neq i,j} x_k :=\lambda+\lambda^{-1}
 \]
 where $|\lambda|\ge 1$ and
\[
\sigma(\gamma) =\sum_{k \neq i,j} x_k^2.
\]
 Note that $|\lambda|=1$ if and only if $x \in [-2,2]$, while $\lambda=\lambda^{-1}$ if and only if $x \in \{-2,2\}$.

 By the edge and vertex relations \eqref{eqn:edge} and \eqref{eqn:vertex}, we have
\[y_{m-1}+y_{m+1}=x y_{m}\] and
 \[
 y_m^2+y_{m+1}^2+\sigma(\gamma)=x y_m y_{m+1} + \mu
 \]
 for all $m\in\mathbb Z$.
 If $x\notin \{-2,2\}$, then solving the difference equation gives
$$ y_m = A \lambda^{m} + B\lambda^{-m}, \quad m\in\mathbb Z $$
where $A=(y_1-y_0\lambda^{-1})/(\lambda-\lambda^{-1})$ and $B=(y_0\lambda-y_1)/(\lambda-\lambda^{-1})$.
Furthermore,
 \[AB=\frac{\sigma(\gamma)-\mu}{x^2-4}.\]

For $x = \pm 2$ the situation is even simpler, in that case $y_m$ is linear in $m$. Consolidating everything, we have the following result which describes the growth of $y_m$ along $\gamma$ (compare \cite{tan-wong-zhang2008advm} Lemma 3.9).

 \begin{lem} \label{lem:geodesicalong}
With the notation introduced above, we have, for all $m \in \Z$,

 (1) If $\sigma(\gamma) = \mu$, then $y_m = y_0 \lambda^m$  or $y_m = y_0 \lambda^{-m}$.

 (2) If $\phi(\gamma)\not\in [-2,2]$ and $\sigma(\gamma) \neq \mu$  then $|y_m|$ grows exponentially as $|m| \rightarrow \infty$.

 (3) If $\phi(\gamma)\in (-2,2)$ then $|y_m|$ remains bounded.

 (4) If $\phi(\gamma)=2$, then $y_m=y_0+m(\mu-\sigma(\gamma))^{1/2}$.

 (5) If $\phi(\gamma)=-2$, then $y_m=(-1)^{m}(y_0+m(\mu-\sigma(\gamma))^{1/2})$.
 \end{lem}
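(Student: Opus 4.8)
The plan is to deduce all five statements directly from the closed-form solutions of the recurrence $y_{m-1}+y_{m+1}=xy_m$ already established above, splitting into the case $x=\phi(\gamma)\notin\{-2,2\}$ (which will give (1), (2), (3)) and the case $x=\pm 2$ (which will give (4), (5) and the remaining instances of (1)). In each case essentially all the work has been done in the computation preceding the lemma; the proof is a case-by-case reading-off of consequences.

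First I would take $x\notin\{-2,2\}$, so that $\lambda-\lambda^{-1}\neq 0$, $y_m=A\lambda^m+B\lambda^{-m}$ with $A,B$ as given, $y_0=A+B$, and $AB=(\sigma(\gamma)-\mu)/(x^2-4)$. For (1), $\sigma(\gamma)=\mu$ forces $AB=0$; if $B=0$ then $y_m=y_0\lambda^m$, and if $A=0$ then $y_m=y_0\lambda^{-m}$. For (2), the hypotheses $\phi(\gamma)\notin[-2,2]$ and $\sigma(\gamma)\neq\mu$ give $|\lambda|>1$ (using that $|\lambda|=1$ iff $x\in[-2,2]$) and $AB\neq 0$, hence $A,B\neq 0$; then the bounds $|y_m|\ge |A|\,|\lambda|^m-|B|\,|\lambda|^{-m}$ for $m\ge 0$ and $|y_m|\ge |B|\,|\lambda|^{-m}-|A|\,|\lambda|^m$ for $m\le 0$ show $|y_m|\to\infty$ exponentially as $|m|\to\infty$. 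For (3), $\phi(\gamma)\in(-2,2)$ gives $|\lambda|=1$ with $\lambda\neq\pm 1$, so $|y_m|\le |A|+|B|$ is bounded.

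Next I would handle $x=\pm 2$. When $x=2$ the recurrence reads $y_{m+1}-y_m=y_m-y_{m-1}$, so the first differences are constant and $y_m=y_0+m(y_1-y_0)$; substituting into the vertex relation $y_m^2+y_{m+1}^2+\sigma(\gamma)=2y_my_{m+1}+\mu$ gives $(y_1-y_0)^2=\mu-\sigma(\gamma)$, which I would take as the defining choice of the branch $(\mu-\sigma(\gamma))^{1/2}$, yielding (4); if moreover $\sigma(\gamma)=\mu$ then $y_m\equiv y_0=y_0\lambda^m$ with $\lambda=1$, recovering (1). When $x=-2$, setting $z_m:=(-1)^m y_m$ converts the recurrence into $z_{m+1}-z_m=z_m-z_{m-1}$ and the vertex relation into $(y_m+y_{m+1})^2=\mu-\sigma(\gamma)$, i.e. $(z_{m+1}-z_m)^2=\mu-\sigma(\gamma)$; hence $z_m=y_0+m(\mu-\sigma(\gamma))^{1/2}$ and $y_m=(-1)^m\bigl(y_0+m(\mu-\sigma(\gamma))^{1/2}\bigr)$, which is (5), and $\sigma(\gamma)=\mu$ again gives $y_m=(-1)^m y_0=y_0\lambda^m$ with $\lambda=-1$.

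The only genuinely delicate point is the bookkeeping in the degenerate cases $x=\pm 2$: one must verify that the constant-difference reduction is simultaneously compatible with the linear recurrence and with the quadratic vertex relation, so that $(\mu-\sigma(\gamma))^{1/2}$ is a single consistent choice of square root along the whole geodesic, and that the $\sigma(\gamma)=\mu$ sub-case of (1) is correctly absorbed here rather than stated separately. Everything else is immediate from the closed forms, so I expect the write-up to be short.
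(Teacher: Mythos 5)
Your proposal is correct and follows exactly the route the paper intends: the lemma is stated as a direct consolidation of the closed-form solution $y_m=A\lambda^m+B\lambda^{-m}$ with $AB=(\sigma(\gamma)-\mu)/(x^2-4)$ derived just before it, together with the linear/alternating-linear behaviour when $x=\pm2$, and you read off all five cases from these formulas in the same way. Your careful handling of the degenerate cases $x=\pm2$ (including absorbing the $\sigma(\gamma)=\mu$ sub-case of (1) there) supplies details the paper leaves implicit, but introduces nothing different in substance.
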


 \subsection{Sink estimates}
In general, at a vertex $v=X_1 \cap \cdots \cap X_n \in V(\Delta)$ where $X_i=[v;\{\hat{i}\}]$,
 a $\mu$-Hurwitz map $\phi$ can take values $x_1,\cdots,x_n \in \mathbb C$ each of very large modulus. Furthermore,  for the alternating geodesics $\gamma$ passing through $v$, $|\phi(\gamma)|$  may also be very large.
 However, if $v$ is a sink of the $\phi$-directed tree $\vec{\Delta}_{\phi}$, then the minimum of $|x_1|,\cdots,|x_n|$ and $|\phi(\gamma)|$ where $v \subset \gamma$ have  upper bounds dependent only on $\mu$. We have:

 \begin{prop}\label{prop:sinkestimate}
Let $\phi \in \Hur_{\mu}$ where $\mu \in \CC$. Suppose that the $\phi$-directed tree $\vec \Delta_{\phi}$
 has a sink at $v=X_1 \cap \cdots \cap X_n$ where $X_i=[v;\{\hat{i}\}]$. Let $\gamma_{i,j}=[v;\{i,j\}]$, where $ 1 \le i <j \le n$. Then there exists constants $M(\mu), N(\mu)>0$ such that
$\min \{ |x_1|,\cdots,|x_n| \} \le N(\mu)$ and  $\min \{ |\phi(\gamma_{i,j})|, ~ 1 \le i <j \le n \} \le M(\mu)$.

 \end{prop}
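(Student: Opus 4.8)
The plan is to exploit the defining inequality of a sink together with the vertex relation \eqref{eqn:vertex}. Since $v$ is a sink of $\vec\Delta_\phi$, every edge incident to $v$ points towards $v$, so for each $i$ the edge relation \eqref{eqn:edge} gives $2|x_i| \ge |x_i + x_i'| = |\prod_{j\neq i} x_j|$, where $x_i' = \phi([v';\{\hat i\}])$ and $v'$ is the other endpoint of the $i$-edge at $v$. Multiplying these $n$ inequalities together yields $2^n \prod_i |x_i| \ge \prod_i |\prod_{j\neq i} x_j| = \prod_i |x_i|^{n-1}$, hence $\prod_i |x_i|^{n-2} \le 2^n$, so $\prod_i |x_i| \le 2^{n/(n-2)}$. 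In particular $\min_i |x_i| \le 2^{1/(n-2)} =: N_0$, a universal bound independent even of $\mu$; this already gives the first assertion (one may take $N(\mu) = N_0$).

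First I would record the above and then pass to the bound on the alternating geodesics. Fix an index $i_0$ achieving the minimum, so $|x_{i_0}| \le N_0$. For the $\mu$-Hurwitz map the vertex relation reads $\sum_{k=1}^n x_k^2 - \prod_{k=1}^n x_k = \mu$. I want to bound $|\phi(\gamma_{i,j})| = |\prod_{k\neq i,j} x_k|$ for some pair $\{i,j\}$. The natural candidates are the pairs containing $i_0$. Consider $\gamma_{i_0,j}$ for the index $j$ for which $|x_j|$ is \emph{largest} among $k\neq i_0$; call it $j_0$ and set $P = \prod_{k\neq i_0} x_k$ (note $P = x_j' \cdot(\text{stuff})$... more simply $2|x_{i_0}|\ge |P|$ from the sink inequality, so $|P|\le 2N_0$). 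Now $\phi(\gamma_{i_0,j_0}) = P / x_{j_0}$, so I need a \emph{lower} bound on $|x_{j_0}|$, equivalently I must rule out all of $x_1,\dots,x_n$ being small. If $\max_k |x_k| \le R$ for some $R$, then from the vertex relation $|\prod_k x_k| \le \sum_k |x_k|^2 + |\mu| \le nR^2 + |\mu|$, which is automatically satisfied and gives nothing; instead I argue by cases on the size of $|x_{j_0}|$.

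\textbf{Case 1: $|x_{j_0}| \le 1$.} Then every $|x_k| \le \max(N_0, 1)=:R_0$ for $k\neq i_0$, and $|x_{i_0}|\le N_0\le R_0$, so all coordinates are bounded by $R_0$; hence $|\phi(\gamma_{i,j})| = |\prod_{k\neq i,j}x_k| \le R_0^{\,n-2}$ for \emph{every} pair, and we may take $M(\mu) = R_0^{n-2}$, again independent of $\mu$. \textbf{Case 2: $|x_{j_0}| > 1$.} Then $|\phi(\gamma_{i_0,j_0})| = |P|/|x_{j_0}| \le |P| \le 2N_0$. In either case $\min_{i<j}|\phi(\gamma_{i,j})| \le M(\mu) := \max(R_0^{\,n-2},\, 2N_0)$, which in fact does not depend on $\mu$ at all. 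This completes the proof.

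\textbf{Remark on the main obstacle.} The one point requiring care is the very first multiplicative step: combining the $n$ sink inequalities is only valid because $\phi$ is not dihedral is \emph{not} needed here --- the inequalities hold verbatim --- but one must check the exponent bookkeeping $\sum_i (n-1) - \sum_i 1 = n(n-2)$ on the $|x_i|$ powers, and that $n\ge 3$ makes $n-2\ge 1$ so the inequality $\prod|x_i|^{n-2}\le 2^n$ is genuinely a bound. After that, the geodesic estimate is essentially a case analysis on whether the largest coordinate at the sink is bounded or not, and no analytic input beyond the vertex relation is used. I expect the proof the authors give may instead keep explicit $\mu$-dependence (e.g.\ bounding things in terms of $|\mu|$) via the vertex relation, but the argument above shows uniform constants suffice; if a $\mu$-dependent statement is all that is claimed, one can simply absorb everything into $N(\mu), M(\mu)$ without optimizing.
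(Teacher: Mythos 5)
Your very first inequality is the wrong way around, and the error is fatal. By the paper's convention an edge $e$ of color $i$ incident to $v$ is directed \emph{away} from $v$ precisely when $|x_i|>|x_i'|$ (this is exactly the situation in the Fork Lemma, Lemma~\ref{lem:2away}(i), where the estimate $2|x_i|\ge|x_i+x_i'|=|\prod_{j\neq i}x_j|$ is derived). At a \emph{sink} all edges are directed \emph{towards} $v$, so $|x_i|\le|x_i'|$ for every $i$, and the triangle inequality only yields $2|x_i'|\ge|\prod_{j\neq i}x_j|$ --- a bound in terms of the neighbouring values, not the values at $v$. Your claimed consequences are in fact false: for $n=3$, $\mu=0$, the vertex $(3,3,3)$ is a sink (each $x_i'=9-3=6>3$), yet $\prod_i|x_i|=27>8=2^{n/(n-2)}$ and $\min_i|x_i|=3>2=2^{1/(n-2)}$. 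Worse, your final claim that $N$ and $M$ can be taken independent of $\mu$ is impossible: for real $t\ge 2$ the point $(t,\dots,t)$ is a sink (each $x_i'=t^{n-1}-t$ has modulus $>t$), and $\min_i|x_i|=t$ is unbounded as $t\to\infty$ while $\mu=nt^2-t^n$ varies. The statement's explicit dependence of $M(\mu),N(\mu)$ on $\mu$ is not an artifact; it is essential, and the fact that your argument never invokes the vertex relation \eqref{eqn:vertex} should have been a warning sign. The second half of your proof (the bound $|P|\le 2N_0$ and the case analysis) rests on the same reversed inequality and collapses with it.

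The paper omits the proof, referring to Lemma~3.8 of \cite{tan-wong-zhang2008advm}; the correct route does go through the vertex relation. From \eqref{eqn:vertex} one gets $x_ix_i'=\sum_{j\neq i}x_j^2-\mu$, so the sink condition $|x_i|\le|x_i'|$ gives $|x_i|^2\le\bigl|\sum_{j\neq i}x_j^2-\mu\bigr|$ for every $i$. Ordering $|x_1|\le\cdots\le|x_n|$, applying this with $i=n$ yields $|x_n|^2\le(n-1)|x_{n-1}|^2+|\mu|$, while the vertex relation also gives $\prod_k|x_k|\le n|x_n|^2+|\mu|$. Combining these (and splitting into the cases $|x_{n-1}|\le 1$ and $|x_{n-1}|>1$) bounds both $|x_1|^{n-2}$ and $\prod_{k=1}^{n-2}|x_k|=|\phi(\gamma_{n-1,n})|$ by expressions of the form $n(n-1)+(n+1)|\mu|$, which produces admissible $N(\mu)$ and $M(\mu)$.
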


 The proof of Proposition \ref{prop:sinkestimate} is essentially the same as that of Lemma 3.8 in \cite{tan-wong-zhang2008advm} and we omit it.

 \section{Bowditch conditions and domain of discontinuity}\label{s:domainofdis}

 \subsection{The Bowditch conditions}\label{ss:Bowditchconditions}
 We denote by $\B$ the set of all  (extended) Hurwitz maps $\phi:\C \cup \A \rightarrow\mathbb C$
 which satisfy the {\it Bowditch conditions} (B1) and (B2) below:
 \begin{itemize}
 \item[(B1)] $\{\gamma \in \A \mid \phi(\gamma) \in [-2,2] \} = \emptyset$;
 \item[(B2)] $\A_{\phi}(K)$ is finite (possibly empty) for some $K>2$.
 \end{itemize}
 We also denote by $\B_{\mu}\subset \B$ the subset consisting of $\mu$-Hurwitz maps. Recalling that $\Hur$ identifies with $\CC^n$, we  denote by $\D$ and $\D_{\mu}$ the corresponding sets of $\mathbfit{a}\in \CC^n$ which induces Hurwitz maps $\phi \in \B$ and $\B_{\mu}$ respectively.

 Note that if the set described in (B2) is non-empty, it is edge-connected by proposition \ref{prop:e-c}. Also, if $\A_{\phi}(K)$ is finite for some $K>2$, it is finite for $\A_{\phi}(K')$ for any $K'<K$. We shall see later that if (B1) and (B2) holds for some $K>2$, then $\log^+|\phi|$ has Fibonacci growth which implies that (B2) also holds for any $K>2$.

 \smallskip

 The rest of this section will be devoted to proving results about $\B$ which would imply Theorem \ref{thm:domainofdis}, that is, $\D$ is a non-empty,
 invariant open subset of $\CC^n$ on which  $\Gamma_n$ acts properly discontinuously. This answers Question 2 in the introduction. We first note that the definition of $\B$ implies that $\D$ and $\D_{\mu}$ are $\Gamma_n$-invariant since for any $g \in \Gamma_n$, $\phi_{g(\mathbfit{a})}$ satisfies the Bowditch conditions (B1) and (B2) if and only if $\phi_{\mathbfit{a}}$ satisfies (B1) and (B2).

 \subsection{Escaping ray}

 The following lemma guarantees that for any $\phi\in\B$, there is no escaping ray consisting of $\phi$-directed edges.

 \begin{lem}\label{lem:escpath}
 Suppose $\alpha $ is an infinite geodesic ray in $\Delta$ consisting of a sequence of edges
 such that each edge is $\phi$-directed towards the next.
 Then either $\alpha$ is eventually contained in some $\gamma\in\A$ with $\phi(\gamma)\in[-2,2]$,
 or $\alpha$ meets  infinitely many $\gamma\in\A_{\phi}(K)$ for any given $K >2$.
 \end{lem}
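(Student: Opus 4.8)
The plan is to argue by contradiction, assuming $\alpha$ is neither eventually contained in an alternating geodesic $\gamma$ with $\phi(\gamma)\in[-2,2]$ nor meets infinitely many elements of $\A_\phi(K)$ for some fixed $K>2$. Write $\alpha$ as an infinite vertex-edge sequence $v_0', e_1', v_1', e_2', \ldots$ with each $e_m'$ $\phi$-directed from $v_{m-1}'$ toward $v_m'$. The key structural observation is that along such a ray, consecutive edges can only repeat a label finitely often without $\alpha$ running into an alternating geodesic: if $e_m'\in E_{\{i\}}(\Delta)$ and $e_{m+1}'\in E_{\{i\}}(\Delta)$ is impossible in a tree (they share vertex $v_m'$ but edges at a vertex have distinct labels), so in fact the labels of $e_m', e_{m+1}'$ are always distinct. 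More useful: if the ray alternates between just two labels $i,j$ from some point on, then $\alpha$ is eventually contained in a single $\gamma=[v;\{i,j\}]\in\A$; the edge relations force $|y_{m}|$ to be monotonically increasing in modulus along a directed alternating geodesic unless $\phi(\gamma)\in[-2,2]$, and by Lemma~\ref{lem:geodesicalong} either $\phi(\gamma)\in[-2,2]$ (the first alternative of the conclusion) or $|\phi(\gamma_{\text{passing through the }v_m'})|$ stays bounded — and here one checks directly that the directedness of the edges combined with the edge relation \eqref{eqn:edge} forces $|\phi(\gamma)|\le 2$ for the alternating geodesics tangent to the turning points, giving infinitely many elements of $\A_\phi(2)\subset\A_\phi(K)$, contradicting finiteness.

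So the heart of the argument is the case where the ray does \emph{not} eventually alternate between two labels: then there are infinitely many indices $m$ at which the label changes in a genuinely new way, i.e., $e_m'\in E_{\{i\}}(\Delta)$ and $e_{m+1}'\in E_{\{j\}}(\Delta)$ with $i\neq j$ but also $e_{m+1}'$ is not the continuation of the alternating geodesic through $e_{m-1}',e_m'$. At each such "corner" vertex $v_m'$, I want to produce an element of $\A_\phi(2)$. The mechanism is essentially the Fork Lemma, Lemma~\ref{lem:2away}(i): if at $v_m'$ two incident edges point \emph{away} from $v_m'$, then the corresponding alternating geodesic lies in $\A_\phi(2)$. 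Since $e_m'$ points \emph{into} $v_m'$ (toward $v_{m+1}'$... wait, $e_m'$ is directed $v_{m-1}'\to v_m'$, so it points into $v_m'$) and $e_{m+1}'$ points out of $v_m'$, $v_m'$ is not itself a fork from these two edges alone. The right move is instead Lemma~\ref{lem:2away}(iii): knowing one alternating geodesic through the ray is in $\A_\phi(K)$ and propagating it forward along the $\phi$-directed ray. Concretely, I will first locate one alternating geodesic $\gamma_0$ meeting $\alpha$ with $|\phi(\gamma_0)|\le K$ — if none exists anywhere along $\alpha$ then \emph{every} edge relation $x_i+x_i'=\prod_{j\neq i}x_j$ along $\alpha$ has all three "directions" under control, and one shows the moduli $|\phi(X_i)|$ must decrease geometrically as we move backward along $\alpha$, which is impossible in finitely many steps unless they were never large, again forcing small $|\phi|$ values on alternating geodesics. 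Then, given $\gamma_0\in\A_\phi(K)$ meeting $\alpha$ at some edge, Lemma~\ref{lem:2away}(iii) lets us push $\gamma_0$ forward: at each subsequent vertex $v_m'$ the outgoing edge $e_{m+1}'$ has a label $k$ different from the two labels of the current alternating geodesic (precisely at the corners), so part (iii) produces a new alternating geodesic at $v_{m+1}'$ still in $\A_\phi(K)$. These are distinct for distinct corners (they live at increasing distance along $\alpha$), yielding infinitely many elements of $\A_\phi(K)$ — contradiction.

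The main obstacle I anticipate is the bookkeeping needed to guarantee that the alternating geodesics produced at successive corners are genuinely distinct and to handle the degenerate case where the ray, while not eventually two-periodic, still only changes direction finitely often before settling into an alternating pattern — this last case must be routed into the "$\phi(\gamma)\in[-2,2]$" alternative via Lemma~\ref{lem:geodesicalong}, and one has to verify that the directedness hypothesis on the edges of $\alpha$ is compatible only with $\phi(\gamma)$ having modulus $\le 2$ on the limiting alternating geodesic (a bounded, in fact eventually non-increasing, modulus sequence $|y_m|$ forces, by Lemma~\ref{lem:geodesicalong} parts (2)–(3), that $\phi(\gamma)\in(-2,2)$ or that $|y_m|$ is eventually constant, the latter meaning the edges are indecisive which is ruled out by the strict directedness unless $\phi$ takes the relevant product value in $[-2,2]$). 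Once that dichotomy is cleanly stated, the rest is an application of the three parts of Lemma~\ref{lem:2away} exactly as above. I would also double-check the edge case where $\alpha$ starts by running into $\A_\phi(K)$ immediately, which only helps. I expect the write-up to be short: set up the vertex-edge notation, dispose of the eventually-two-periodic case by Lemma~\ref{lem:geodesicalong}, and in the remaining case use Lemma~\ref{lem:2away}(iii) to propagate a single $\A_\phi(K)$-element into infinitely many.
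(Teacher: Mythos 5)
There is a genuine gap at the heart of the argument: producing even \emph{one} element of $\A_{\phi}(K)$ meeting $\alpha$. Your skeleton for the rest is essentially the paper's (dispose of the eventually-alternating case via Lemma \ref{lem:geodesicalong}, then propagate a single element of $\A_{\phi}(K)$ forward along the directed ray using Lemma \ref{lem:2away}(iii)), but the step you wave at with ``if none exists anywhere along $\alpha$ then \dots the moduli must decrease geometrically as we move backward along $\alpha$, which is impossible in finitely many steps'' is not an argument. The directedness of an edge only gives the one-sided inequality $|x_{\mathrm{new}}|\le |x_{\mathrm{old}}|$, and the edge relation \eqref{eqn:edge} combined with ``all products exceed $K$'' gives, at each vertex, a comparison between the incoming-label and outgoing-label coordinates \emph{at that vertex}; these do not chain into a geometric decay along the ray, because after crossing an edge the relevant coordinate may drop by an uncontrolled amount, and in any case ``finitely many steps backward to $v_0'$'' ends at the start of the ray, where nothing forbids large values. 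So no contradiction is reached, and the assumption that every alternating geodesic meeting $\alpha$ has $|\phi|>K$ is not refuted by what you wrote.

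The missing idea, which is the actual content of the paper's proof, is quantitative: since $\alpha$ is infinite, some label $i$ recurs infinitely often, and the corresponding sequence of moduli $|y^i_l|$ of the $\{\hat{i}\}$-subtrees crossed by $\alpha$ is monotonically non-increasing (by directedness), hence convergent; therefore one can find a crossed $i$-edge $e$ whose two $\{\hat i\}$-values satisfy the \emph{near-equality} $|x_i'|\le|x_i|\le|x_i'|+\varepsilon$. Only with this near-equality does the edge relation give $\bigl|\prod_{m\neq i}x_m'\bigr|=|x_i+x_i'|\le 2|x_i'|+\varepsilon$, which, combined with the assumption that all $\gamma_m=[v';\{i,m\}]$ lie outside $\A_{\phi}(2)$ (forcing $|x_i'|>2^{1/(n-2)}-\varepsilon/2>1$) and with the analogous inequality across the next edge $f$ of label $j$, yields $|\phi([v';\{i,j\}])|\le 2+\varepsilon$. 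Choosing $\varepsilon=\min\{K-2,\,2^{1/(n-2)}-1\}$ places this geodesic in $\A_{\phi}(K)$. Without extracting that near-equality from the monotone convergence, the product manipulation fails (one only gets $|x_i+x_i'|\le|x_i|+|x_i'|$ with $|x_i|$ possibly much larger than $|x_i'|$), and your Case B cannot get started. Your treatment of the eventually-alternating case is also imprecise (the correct dichotomy from Lemma \ref{lem:geodesicalong} is $\phi(\gamma)\in[-2,2]$ versus $\sigma(\gamma)=\mu$ with $y_m\to 0$, the latter putting the \emph{transverse} geodesics through the $v_m$ into $\A_{\phi}(K)$), but that could be repaired; the first-element step cannot, as sketched.
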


 \begin{proof}
 We first claim that for any $K>2$,  $\alpha$ meets some $\gamma \in \A_{\phi}(K)$.
 Suppose the sequence of edges of the ray $\alpha$ is $\{e_m\}_{ m\in\mathbb N}$ where for each $m \in \mathbb N$, $e_m$  is directed from the vertex $v_m$ to $v_{m+1}$. Since $\alpha$ is infinite, there exists $i \in [n]$ such that infinitely many $e_m$ are labeled by $i$; see Figure \ref{fig:escapingray}. The subsequence of $\{e_m\}_{m\in\mathbb N}$ consisting of $i$-edges connects a sequence  $\{Y^i_l\}_{l=1}^{\infty}$ of elements in $\C_i$, the $(n-1)$-regular subtrees labeled by $\{\hat{i}\}$. From the direction of the edges,  the sequence $\{|y^i_l|\}_{l=1}^{\infty}$ is monotonically decreasing, and hence has a limit. In particular, for any arbitrarily small $\varepsilon > 0$, there exists $m \in \NN$ such that $|y_{m+1}^i|\le |y_{m}^i|\le |y_{m+1}^i|+\varepsilon$.
 We focus our attention to the part of $\alpha$ consisting of the edge joining $Y_{m}^i$ to $Y_{m+1}^i$, which we call $e$, and the next edge which we call $f$ for simplicity. Also, to simplify notation, let  $e\in E_{\{i\}}(\Delta)$ be directed from $v$ to $v'$ and $f\in E_{\{j\}}(\Delta)$ ($j \neq i$) be directed from $v'$ to $v''$, see Figure \ref{fig:escapingrayedge}.

\begin{figure}[ht]
\begin{minipage}{.55\textwidth}
  \centering
  \def\svgwidth{\columnwidth}
  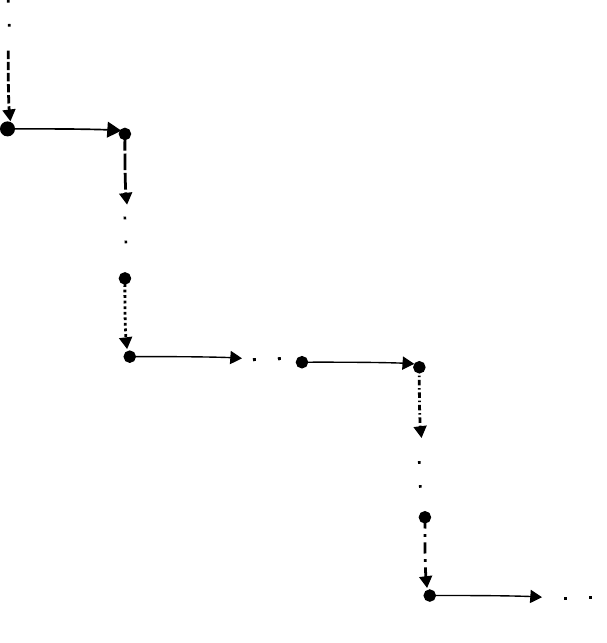
  \caption{Escaping ray}
  \label{fig:escapingray}
\end{minipage}%
\begin{minipage}{.4\textwidth}
  \centering
  \def\svgwidth{\columnwidth}
  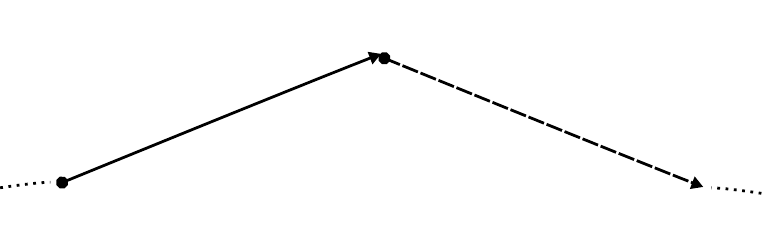
  \caption{Neighboring edges in an escaping ray}
  \label{fig:escapingrayedge}
\end{minipage}
\end{figure}

We have
$$v=\bigcap_{m \in [n]} X_m:=\bigcap_{m \in [n]} [v;\{\hat{m}\}], \quad
v'=\bigcap_{m \in [n]} X_m', \quad
v''=\bigcap_{m \in [n]} X_m'',$$
 and
\begin{equation}\label{eqn:epsiloninq}
|x_i'|\le |x_i| \le |x_i'|+\varepsilon.
\end{equation}
Note that $$x_j=x_j', \quad x_i'=x_i'', \quad \hbox{and}$$
$$ x_m=x_m'=x_m'', \quad \hbox{ for} \quad m \neq i,j. $$

For $m \neq i$, let $\gamma_m=[v';\{i,m\}]$. If $\gamma_m \in \A_{\phi}(2)$ for some $m$, we are done, so we may assume that $\gamma_m \not\in \A_{\phi}(2)$ for all $m \neq i$. Then  $$|\prod_{k \neq m,i}x_k'|>2 \quad \hbox{ for all} \quad m \neq i.$$ Taking the product of all these inequalities, we get
\begin{equation}\label{eqn:edgeinqone}
 |\prod_{m \neq i}x_m'|^{n-2} >2^{n-1} \Longrightarrow |\prod_{m \neq i}x_m'| >2^{(n-1)/(n-2)}.
\end{equation}
On the other hand, from the edge relation across $e$ and (\ref{eqn:epsiloninq}), we get
\begin{equation}\label{eqn:edgeinqtwo}
|\prod_{m \neq i}x_m'|=|x_i+x_i'| \le |x_i|+|x_i'|\le 2|x_i'|+\varepsilon
\end{equation}
Combining inequalities (\ref{eqn:edgeinqone}) and (\ref{eqn:edgeinqtwo}), we get
\begin{equation}\label{eqn:edgeinqthree}
|x_i'|>2^{1/(n-2)}-\varepsilon/2.
\end{equation}
Hence, if $\varepsilon$ is sufficiently small, $|x_i'|>1$. We now claim that $\gamma_j=[v';\{i,j\}] \in \A_{\phi}(2+\varepsilon)$. From the edge relation across $f$, and the direction of $f$, we get:
\begin{equation}\label{eqn:edgeinqfour}
|\prod_{m \neq j} x_m'| =|x_j'+x_j''| \le |x_j'|+|x_j''| \le 2|x_j'|.
\end{equation}
Multiplying (\ref{eqn:edgeinqtwo}) and (\ref{eqn:edgeinqfour}) gives
\begin{eqnarray*}\label{eqn:edgeinqfive}
|x_i' x_j'(\prod_{m \neq i,j} x_m')^2| &\le & 4|x_i'x_j'|+2|x_j'|\varepsilon \\
\Longrightarrow |\prod_{m \neq i,j} x_m'|^2 & \le& 4+\frac{2}{|x_i'|}\varepsilon \\
\Longrightarrow |\prod_{m \neq i,j} x_m'|^2 & \le& (2+\varepsilon)^2 \quad \hbox{since} ~~|x_i'|>1 \\
\Longrightarrow |\phi(\gamma_j)|=|\prod_{m \neq i,j} x_m'|& \le& 2+\varepsilon.
\end{eqnarray*}
Hence $\gamma_j=[v';\{i,j\}] \in \A_{\phi}(K)$ if $\varepsilon$ is chosen to be $\min \{K-2, 2^{1/(n-2)}-1\}$.

To recap, we have shown that either $\alpha$ meets some $\gamma_m=[v';\{i,m\}] \in \A_{\phi}(2)$, or  $\alpha$ meets $\gamma_j=[v';\{i,j\}] \in \A_{\phi}(K)$ which proves the claim. To continue, if the tail of $\alpha$ is eventually contained in some $\gamma\in \A_{\phi}(K)$, then by Lemma \ref{lem:geodesicalong}, we must have either $\phi(\gamma) \in[-2,2]$ (in which case we are done), or $\sigma(\gamma)=\mu \neq \pm 2$. In the latter case, it is easy to see that since the values of $y_m$ around $\gamma$ approach zero,  $\alpha$ meets infinitely many $\gamma_k \in \A_{\phi}(K)$. On the other hand, if $\alpha$ is not eventually contained in any alternating geodesic $\gamma \in \A$, then by Lemma \ref{lem:2away} (iii), we see that  if the edge $e_m$ leaves $\gamma \in \A_{\phi}(K)$, then $\alpha$ meets a new $\gamma' \in \A_{\phi}(K)$ at the head of $e_m$. Hence, in this case, $\alpha$ meets infinitely many $\gamma \in \A_{\phi}(K)$ as well.
\end{proof}

A direct consequence of the above and the definition of the Bowditch conditions gives:

\begin{cor}\label{cor:sinkexist}
If $\phi \in \B$, then there is no infinite geodesic ray in $\vecDelta$ consisting of a sequence of edges
 such that  each edge is $\phi$-directed towards the next. Hence $\vecDelta$ contains a sink.
\end{cor}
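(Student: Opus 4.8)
The plan is to argue by contradiction in two stages: first to rule out infinite $\phi$-directed rays using the Bowditch conditions, and then to derive the existence of a sink by a greedy path-following argument.

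For the first stage, suppose $\phi \in \B$ and that $\alpha$ is an infinite geodesic ray in $\vecDelta$ every edge of which is $\phi$-directed towards the next. Lemma \ref{lem:escpath} then applies verbatim and offers two possibilities. Either $\alpha$ is eventually contained in some $\gamma \in \A$ with $\phi(\gamma) \in [-2,2]$, which is forbidden by condition (B1); or, for every $K > 2$, the ray $\alpha$ meets infinitely many distinct $\gamma \in \A_{\phi}(K)$. Choosing the particular $K > 2$ for which (B2) guarantees that $\A_{\phi}(K)$ is finite, the latter is also impossible. Hence no such ray exists, which is exactly the first assertion of the corollary.

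For the second stage I would argue contrapositively. If $\vecDelta$ had no sink, then every vertex $v \in V(\Delta)$ has at least one incident edge directed away from it. Starting at an arbitrary vertex $v_0$, pick an outgoing edge $e_0$ with head $v_1$; having reached $v_{m+1}$, pick an outgoing edge $e_{m+1}$ at $v_{m+1}$. Since $e_m$ points \emph{towards} $v_{m+1}$, it cannot be the chosen outgoing edge $e_{m+1}$, so consecutive edges are distinct; as $\Delta$ is a tree, a non-backtracking walk is automatically a simple geodesic, so $v_0, e_0, v_1, e_1, \dots$ is an infinite geodesic ray, and by construction each of its edges is $\phi$-directed towards the next. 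This contradicts the first stage, so $\vecDelta$ must contain a sink.

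All the analytic content here is already packaged in Lemma \ref{lem:escpath}; the only point requiring any care — and the closest thing to an obstacle — is verifying that the greedily built path is genuinely a geodesic ray rather than a loop back to a previously visited vertex, which is immediate once one observes that the orientation prevents backtracking and that trees contain no cycles. No further estimates or case analysis are needed.
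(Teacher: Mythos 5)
Your proposal is correct and follows essentially the same route as the paper, which states the corollary as a direct consequence of Lemma \ref{lem:escpath} together with conditions (B1) and (B2); you have simply spelled out the details, including the standard non-backtracking-walk argument for deducing the existence of a sink. No gaps.
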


 \subsection{Attracting subtrees}\label{def:attracting}

 \begin{defn}Given $\phi \in \Hur$, a subtree $T$ (possibly a single vertex) of $\vecDelta$ is said to be $\phi$-{\it attracting}
 if  every edge $\vec e \in E(\vecDelta)$ not contained in $T$ is directed decisively towards $T$.
\end{defn}
 The aim of this and the next subsections is show that for every $\phi\in \B$ there is finite subtree $T_{\phi} \subset \vecDelta$ which is $\phi$-attracting.
We first consider the case where $\A_{\phi}(K)=\emptyset$.

 \begin{prop}\label{prop:sink}
 If $\phi \in \B$ is such that $\A_{\phi}(K)=\emptyset$ for some $K>2$,
 then there is a unique sink in $\vecDelta$ which is $\phi$-attracting.
 \end{prop}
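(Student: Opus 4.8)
The plan is to read everything off the directed tree $\vecDelta_\phi$ together with the growth description of Lemma \ref{lem:geodesicalong}. First, since $K>2$ we have $\A_\phi(2)\subseteq\A_\phi(K)=\emptyset$, so condition (B1) holds; in particular $\phi$ is nowhere zero and non-dihedral. By the Fork Lemma (Lemma \ref{lem:2away}(i)), $\vecDelta_\phi$ has no fork, so at every vertex at most one incident edge points away. Combined with Corollary \ref{cor:sinkexist} (no infinite escaping ray, hence a sink exists), this gives: from any vertex, following the at-most-one outgoing edge produces an edge-path that must terminate at a sink. For uniqueness I would argue that if $v$ and $w$ were two sinks, then walking along the geodesic $[v,w]$ one sees inductively that every edge of $[v,w]$ points toward $v$ (the edge at $v$ points to $v$; its far endpoint is a merge whose unique outgoing edge is that one, forcing the next geodesic edge to point toward $v$, and so on), so the last edge points away from $w$ — contradiction. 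The same induction along the geodesic joining (the far endpoint of) an arbitrary edge to $v$ shows that \emph{every} edge of $\vecDelta_\phi$ is directed toward $v$.

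It then remains to show that no edge is indecisive, which is exactly what makes $\{v\}$ $\phi$-attracting. The first half of this is to rule out $\sigma(\gamma)=\mu$ for every $\gamma\in\A$: if $\sigma(\gamma)=\mu$ then, since $\phi(\gamma)\notin[-2,2]$ forces $|\lambda|>1$, Lemma \ref{lem:geodesicalong}(1) gives $y_m=y_0\lambda^{\pm m}$, so the crossing-tree values $y_m$ along $\gamma$ tend to $0$ in one direction; but the transverse coordinates $x_k$ ($k\neq i,j$) are constant along $\gamma$, so for a vertex $u$ far out along $\gamma$ in that direction and any $l\neq i,j$ one has $\phi([u;\{i,l\}])=y_m\prod_{k\neq i,j,l}x_k\to 0$, eventually producing an element of $\A_\phi(K)$ — contradicting $\A_\phi(K)=\emptyset$ (this works for all $n\ge 3$, the empty product being $1$). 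Hence every $\gamma$ falls under Lemma \ref{lem:geodesicalong}(2): $y_m=A\lambda^m+B\lambda^{-m}$ with $A,B\neq 0$, $|\lambda|>1$, and $|y_m|\to\infty$ as $|m|\to\infty$.

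For the second half, fix $\gamma\in\A$ and let $v_0\in V(\gamma)$ be the vertex of $\gamma$ closest to $v$. Since every edge of $\gamma$ points toward $v$, hence toward $v_0$, translating the direction of each edge into the crossing-tree moduli shows that $(|y_m|)$ is non-decreasing within each parity class as one moves away from $v_0$, and it diverges. I would combine this with the explicit form $|y_m|^2=|A|^2|\lambda|^{2m}+|B|^2|\lambda|^{-2m}+2\,\mathrm{Re}(A\bar B e^{2im\arg\lambda})$ — strictly convex in $m$ up to bounded oscillation — together with the product identity $y_{m-1}y_{m+1}=y_m^2+(\sigma(\gamma)-\mu)$ and the fact (established when analysing an indecisive edge) that the transverse crossing values are an order of magnitude below the would-be plateau value, to conclude that a one-step plateau $|y_{m-1}|=|y_{m+1}|$ is impossible. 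Thus every edge of $\gamma$ is decisive; as $\gamma$ was arbitrary, every edge of $\vecDelta_\phi$ is decisive and directed toward $v$, so the sink $\{v\}$ is $\phi$-attracting. I expect this decisiveness step — precisely, excluding indecisive edges — to be the main obstacle, since it is the only place where the coarse combinatorics of the directed tree must be promoted to a strict quantitative estimate on the $\C$-values; everything before it is bookkeeping on the tree plus Corollary \ref{cor:sinkexist}.
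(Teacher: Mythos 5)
Your first three steps --- existence of a sink via Corollary \ref{cor:sinkexist}, uniqueness, and the fact that every edge of $\vecDelta$ points towards the sink $v$ --- are correct and amount to the paper's own (much terser) argument: the paper simply observes that if some edge were directed away from $v$, the path from $v$ to its head would contain a fork, contradicting Lemma \ref{lem:2away}(i) because $\A_{\phi}(2)\subseteq\A_{\phi}(K)=\emptyset$. Your preliminary observations ((B1) holds, $\phi$ is non-dihedral) and your exclusion of $\sigma(\gamma)=\mu$ via Lemma \ref{lem:geodesicalong}(1) are also sound.

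The genuine gap is exactly the step you flag as the main obstacle: showing that no edge is indecisive. That step cannot be completed, because the claim is false under the stated hypotheses. Take $n=3$ and $\mathbfit{a}=(3,3,9/2)$, so $\mu=-9/4$. The three neighbouring triples are $(21/2,3,9/2)$, $(3,21/2,9/2)$ and $(3,3,9/2)$ (since $3\cdot 3-9/2=9/2$), so the third edge at the root is indecisive; a routine induction shows that every coordinate in the whole orbit is at least $3$, hence $\A_{\phi}(5/2)=\emptyset$ and $\phi\in\B$, yet $\vecDelta$ has an indecisive edge incident to its sink. In particular a one-step plateau $|y_{m-1}|=|y_{m+1}|$ really does occur at the bottom of an alternating geodesic, so the convexity-plus-product-identity argument you sketch is trying to prove something untrue, and no amount of sharpening will rescue it. What the proposition actually requires --- and what the paper's three-line proof silently delivers --- is the weak statement that for every edge $e$, writing $X_i$ for the side nearer the sink and $X_i'$ for the far side, one has $|x_i'|\ge |x_i|$, equivalently $2|x_i'|\ge|\prod_{j\neq i}x_j|$; this non-strict inequality is the only thing used downstream (in Proposition \ref{prop:conv} and in the lower Fibonacci bound, Proposition \ref{prop:lowerfibonacci}). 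That weak statement already follows from your step 3, because the proof of Lemma \ref{lem:2away}(i) uses only the non-strict inequalities $2|x_i|\ge|x_i+x_i'|$ and therefore applies verbatim to ``weak forks'' (two edges at a vertex each satisfying $|x_i|\ge|x_i'|$). So the correct move is not to prove decisiveness but to note that indecisive edges are harmless and to read ``directed decisively towards $T$'' in this weak sense --- a point the paper itself elides rather than addresses.
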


 \begin{proof} 
 By Corollary \ref{cor:sinkexist}, there is a sink $v$ in $\vecDelta$.
 Then $v$ is $\phi$-attracting; otherwise, some $\phi$-directed edge $\vec{e}$ is directed away from $v$, and the path in $\Delta$ connecting $v$ and the head of $\vec{e}$  contains a  fork. By Lemma \ref{lem:2away} (i),
   $\A_{\phi}(2) \neq \emptyset$, a contradiction.

 \end{proof}

\smallskip

 \subsection{Attracting subtree $T_{\phi}$}\label{ss:attrsubtree}


For a general $\mu$-Hurwitz map $\phi\in \B$ where $\A_{\phi}(K)\neq\emptyset$ for some some fixed $K >2$, we will construct a finite connected subtree $T_{\phi}$ which will be $\phi$-attracting. This will consist of the union of non-empty,  finite, connected sub-intervals $J(\gamma)$ of $\gamma \in \A_{\phi}(K)$ which will have the following desired properties:
\begin{enumerate}
\item Every $e$ which lies in the intersection of two alternating geodesics $\alpha, \beta \in \A_{\phi}(K)$ is contained in  $T_{\phi}$.
\item If $\gamma \in \A_{\phi}(K)$ and $J(\gamma)\subset \gamma$ is the non-empty, connected finite sub-interval constructed, then all edges on $\gamma$ not contained in $J(\gamma)$ are $\phi$-directed towards $J(\gamma)$.
\end{enumerate}

It is not difficult to see that it is always possible to construct such a subtree, by Lemma \ref{lem:geodesicalong}. We can do this more systematically by first constructing the  function $H_{\mu}:\CC^{n-2} \rightarrow \CC \cup \{\infty\}$ as follows:

 Let $\mu \in \CC$. For $(x_1, \cdots, x_{n-2}) \in \CC^{n-2}$, let $x:=\prod_{i=1}^{n-2} x_i$, and $\lambda\in \CC$ be such that $\lambda+\lambda^{-1}=x$ and $|\lambda| \ge 1$, and $\sigma=\sum_{i=1}^{n-2} x_i^2$. Define

\begin{equation}
H_{\mu}(x_1, \cdots,x_{n-2})=
\begin{cases}
\infty, \quad \hbox{if}~~ x \in [-2,2]~~\hbox{or}~~\sigma=\mu \\
 \sqrt{\big|\frac{\sigma - \mu}{x^2 - 4} \big|}\frac{2|\lambda|^2}{|\lambda|-1} \quad \hbox{otherwise}.
\end{cases}
\end{equation}


The function $H_{\mu}$ has the following property (see \cite[pp. 780-781]{tan-wong-zhang2008advm}):

\begin{lem}\label{lem:interval} Let $\mu \in\mathbb C$, and $(x_1,\cdots,x_{n-2}) \in \mathbb{C}^{n-2}$ such that $x:=\prod_i x_i \notin [-2,2]$ and $\sigma:= \sum_i x_i^2 \neq \mu$. Let $\{y_m\}_{m\in\mathbb Z}$ be a sequence of complex numbers such that $y_{m-1}+y_{m+1}=x y_m$ and $\mu=\sigma + y_{m}^2+y_{m+1}^2- x y_{m} y_{m+1}$ for all $m \in \mathbb Z$.
Then there exists $m_1< m_2\in\mathbb Z$  such that $|y_m|\le H_{\mu}(x_1,\cdots,x_{n-2})$ exactly when $m_1 \le m \le m_2$,
 and $|y_m|$ is strictly decreasing on $(-\infty, m_1]\cap \mathbb Z$ and strictly increasing on $[m_2,\infty) \cap \mathbb Z$.
\end{lem}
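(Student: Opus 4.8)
The plan is to follow the argument of Tan--Wong--Zhang (\cite{tan-wong-zhang2008advm}, pp.~780--781): pass to the explicit solution of the linear recurrence and reduce everything to elementary estimates on $|A\lambda^m+B\lambda^{-m}|$.

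First I would record the basic facts. Since $\sigma\neq\mu$ and $x\neq\pm2$, the product $AB=(\sigma-\mu)/(x^2-4)$ is nonzero, so in the closed form $y_m=A\lambda^m+B\lambda^{-m}$ --- valid because $x\notin[-2,2]$ forces $x=\lambda+\lambda^{-1}$ with $r:=|\lambda|>1$ --- both $A$ and $B$ are nonzero. Put $P=|A|>0$, $Q=|B|>0$ and $\rho=\sqrt{PQ}$; since $|AB|=|(\sigma-\mu)/(x^2-4)|$ we have $H_{\mu}(x_1,\dots,x_{n-2})=2\rho r^2/(r-1)$. The triangle inequality gives, for all $m\in\Z$,
\[
\big(Pr^m-Qr^{-m}\big)^2 \;\le\; |y_m|^2 \;\le\; \big(Pr^m+Qr^{-m}\big)^2 .
\]
From the lower bound $|y_m|\to\infty$ as $|m|\to\infty$, so $S:=\{m\in\Z:|y_m|\le H_{\mu}\}$ is finite. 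Let $m_*=\tfrac12\log_r(Q/P)$ be the real ``balance point'', characterised by $Pr^{m_*}=Qr^{-m_*}=\rho$; choosing $m_0\in\Z$ with $|m_0-m_*|\le\tfrac12$ and using the upper bound yields $|y_{m_0}|\le 2\rho\sqrt r<2\rho r^2/(r-1)=H_{\mu}$ (the last inequality being $r-1<r^{3/2}$), so $S\neq\emptyset$.

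The technical heart is the following monotonicity estimate: \emph{if $m\in\Z$ with $m\ge m_*$ and $|y_m|>H_{\mu}$, then $|y_{m+1}|>|y_m|$; symmetrically, if $m\le m_*$ and $|y_m|>H_{\mu}$, then $|y_{m-1}|>|y_m|$.} For the first claim, $m\ge m_*$ gives $Qr^{-m}\le\rho$, hence $Pr^m\ge|y_m|-Qr^{-m}>H_{\mu}-\rho=\rho(2r^2-r+1)/(r-1)$; since also $m+1>m_*$, the lower bound above gives $|y_{m+1}|\ge Pr^{m+1}-Qr^{-m-1}$, whence
\[
|y_{m+1}|-|y_m| \;\ge\; Pr^m(r-1)-\rho\,\frac{r+1}{r} \;>\; \rho\,\frac{2r^3-r^2-1}{r} \;>\; 0,
\]
the last inequality because $2r^3-r^2-1>0$ for $r>1$. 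The case $m\le m_*$ is identical with the roles of $Pr^m$ and $Qr^{-m}$ interchanged. This is exactly the step where the precise form of $H_{\mu}$ enters, and the place where one must be most careful; it is the computation of \cite[pp.~780--781]{tan-wong-zhang2008advm}, so I expect it to be the main obstacle only in the sense of bookkeeping the constants.

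Finally I would assemble the statement. If $m\notin S$ and $m\ge m_*$, iterating the estimate shows $|y_{m'}|>H_{\mu}$ for every $m'\ge m$; likewise $|y_{m'}|>H_{\mu}$ for every $m'\le m$ when $m\notin S$, $m\le m_*$. Since every integer lies on one side of $m_*$, $S$ is an interval of integers, so $S=\{m_1,m_1+1,\dots,m_2\}$ with $m_1\le m_2$ (finite and nonempty by the above, and containing $m_0$, so $m_1\le m_0\le m_2$ with $|m_0-m_*|\le\tfrac12$). For the monotonicity outside $S$: an integer $m\le m_1-1$ satisfies $m\le m_0-1\le m_*-\tfrac12<m_*$ and $|y_m|>H_{\mu}$, so the estimate gives $|y_{m-1}|>|y_m|$; together with $|y_{m_1-1}|>H_{\mu}\ge|y_{m_1}|$ this proves $|y_m|$ is strictly decreasing on $(-\infty,m_1]\cap\Z$. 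The mirror-image argument on the right gives strict increase on $[m_2,\infty)\cap\Z$, completing the proof.
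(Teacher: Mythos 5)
Your proof is correct and follows exactly the route the paper itself relies on: the paper gives no proof of this lemma but cites the computation on pp.~780--781 of Tan--Wong--Zhang, which is precisely the closed-form-plus-triangle-inequality argument you reproduce, and your key monotonicity estimate (using $Qr^{-m}\le\rho$ and $Pr^m>H_\mu-\rho$ to get $|y_{m+1}|-|y_m|>\rho(2r^3-r^2-1)/r>0$) checks out. One small point: the lemma asserts $m_1<m_2$, whereas producing a single $m_0$ with $|m_0-m_*|\le\tfrac12$ only yields $m_1\le m_2$; to get two elements of $S$, take instead the two consecutive integers $k=\lfloor m_*\rfloor$ and $k+1$, both within distance $1$ of $m_*$, so that $|y_k|,|y_{k+1}|\le 2\rho r<2\rho r^2/(r-1)=H_\mu$.
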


We can translate this to the following result for $\gamma\in \A$, by first adapting $H_{\mu}$ to a function on $\A$. Let  $\phi \in \Hur_{\mu}$ and $\gamma=\bigcap_{k=1}^{n-2}X_k \in \A$ where $X_k \in \C$. Define
$$H_{\mu}(\gamma)=H_{\mu}(x_1, \cdots, x_{n-2}).$$
Adopting the notation from \S \ref{ss:growth} and the results there, and applying Lemm\ref{lem:interval},  we get:
\begin{lem}\label{lem:directedin}
Let $\phi \in \Hur_{\mu}$ and $\gamma \in \A$ such that $\phi(\gamma) \notin [-2,2]$ and $\sigma(\gamma) \neq \mu$. Let $\{e_m\}_{m \in \Z}$ be the sequence of edges of $\gamma$ connecting $v_m$ to $v_{m+1}$ and $Y_m \in \C$ be such that $Y_m \cap \gamma=\bar{e}_m$. Then there is a non-empty interval $I(\gamma) \subset \gamma$ from $v_s$ to $v_t$ ($s<t$) such that $|y_k| \le H_{\mu}(\gamma)$ if and only if $s \le k \le t$. Furthermore, all the edges on $\gamma$ not contained in $I(\gamma)$ are $\phi$-directed towards it.
\end{lem}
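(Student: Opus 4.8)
The plan is to transport Lemma~\ref{lem:interval} --- an assertion about an abstract two-sided sequence of complex numbers satisfying the relevant difference and quadratic relations --- to the geodesic $\gamma$, using the description of the growth of $\phi$ along $\gamma$ from \S\ref{ss:growth}. Write $\gamma=[v_0;\{i,j\}]$ with the notation there: vertices $v_m$, edges $e_m$ (incident to $v_m$ and $v_{m+1}$), subtrees $Y_m\in\C$ with $Y_m\cap\gamma=\bar e_m$, and $x:=\phi(\gamma)=\prod_{k\ne i,j}x_k$. By the edge and vertex relations \eqref{eqn:edge} and \eqref{eqn:vertex}, the values $y_m:=\phi(Y_m)$ satisfy $y_{m-1}+y_{m+1}=xy_m$ and $\mu=\sigma(\gamma)+y_m^2+y_{m+1}^2-xy_my_{m+1}$ for all $m\in\Z$. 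These are exactly the hypotheses of Lemma~\ref{lem:interval} applied with $(x_1,\dots,x_{n-2})$ the $(n-2)$ coordinates $x_k$, $k\ne i,j$, of $\gamma$, for which $x=\prod_k x_k=\phi(\gamma)\notin[-2,2]$ and $\sigma=\sum_k x_k^2=\sigma(\gamma)\ne\mu$, so that $H_\mu(\gamma):=H_\mu(x_1,\dots,x_{n-2})$ is a finite positive real number.

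First I would invoke Lemma~\ref{lem:interval} to get integers $m_1<m_2$ with $|y_m|\le H_\mu(\gamma)$ exactly for $m_1\le m\le m_2$, with $|y_m|$ strictly decreasing on $(-\infty,m_1]\cap\Z$ and strictly increasing on $[m_2,\infty)\cap\Z$; then set $s:=m_1$, $t:=m_2$ and let $I(\gamma)$ be the subinterval of $\gamma$ from $v_s$ to $v_t$. Since $s<t$ this is a genuine, non-empty interval, its edges are $e_s,\dots,e_{t-1}$, and the characterization $|y_k|\le H_\mu(\gamma)\iff s\le k\le t$ is immediate. For the direction claim I would next identify the $\phi$-direction of an edge $e_m$ of $\gamma$: if $c$ is the colour of $e_m$, the only $(n-1)$-subtree that changes across $e_m$ is the one with colour set $\{\hat c\}$, so $e_m$ compares $[v_m;\{\hat c\}]$ and $[v_{m+1};\{\hat c\}]$; the explicit identifications in \S\ref{ss:growth} give $[v_m;\{\hat c\}]=Y_{m-1}$ (through $v_m$) and $[v_{m+1};\{\hat c\}]=Y_{m+1}$ (through $v_{m+1}$), independently of the parity of $m$. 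Hence, by the rule defining $\vecDelta$, the edge $e_m$ is $\phi$-directed from $v_m$ towards $v_{m+1}$ iff $|y_{m-1}|>|y_{m+1}|$, and the other way iff $|y_{m+1}|>|y_{m-1}|$.

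It then remains to check the edges $e_m$ with $m\le s-1$ or $m\ge t$, which are precisely the edges of $\gamma$ not lying in $I(\gamma)$. If $m\le s-1$, then $m-1<m+1\le m_1$, so strict decrease on $(-\infty,m_1]\cap\Z$ gives $|y_{m-1}|>|y_{m+1}|$ and $e_m$ points from $v_m$ to $v_{m+1}$, i.e. towards $v_s$ and hence towards $I(\gamma)$. Symmetrically, if $m\ge t+1$, then $m_2\le m-1<m+1$, so strict increase gives $|y_{m+1}|>|y_{m-1}|$ and $e_m$ points towards $v_t$, hence towards $I(\gamma)$. The single boundary case is $m=t$: here $m-1=t-1\in[m_1,m_2]$, so $|y_{t-1}|\le H_\mu(\gamma)$, while $m+1=t+1\notin[m_1,m_2]$, so $|y_{t+1}|>H_\mu(\gamma)$; thus $|y_{t+1}|>|y_{t-1}|$ and $e_t$ again points towards $v_t$, hence towards $I(\gamma)$. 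In every case the two compared moduli are distinct, so these edges are moreover decisive. I expect the only real subtlety to be the index bookkeeping --- matching the label of an edge $e_m$ with the pair $Y_{m-1},Y_{m+1}$ compared across it and with the direction convention of $\vecDelta$ --- together with isolating the one boundary edge $e_t$ at which one appeals to the defining property of $H_\mu(\gamma)$ rather than to monotonicity; all the genuine analysis is already packaged in Lemma~\ref{lem:interval}.
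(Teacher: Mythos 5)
Your proof is correct and takes essentially the same route as the paper, which states Lemma~\ref{lem:directedin} as an immediate translation of Lemma~\ref{lem:interval} without writing out the details. Your version simply makes explicit the bookkeeping the paper leaves implicit --- that the edge $e_m$ compares $Y_{m-1}$ and $Y_{m+1}$ regardless of parity, and the separate treatment of the boundary edge at $v_t$ --- and these checks are all accurate.
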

Note that if  $\phi(\gamma) \in [-2,2]$ or $\sigma(\gamma) = \mu$, $H(\gamma)=+\infty$ and we define $I(\gamma)=\gamma$  in this case.

Our strategy now is to take $T_{\phi}$ to be the union of $I(\gamma)$ for all $\gamma \in \A_{\phi}(K)$, where $K>2$ is fixed. However, in order to make sure that the desired property (1) in the beginning of this subsection is satisfied, we have to modify our function $H_{\mu}$ slightly to ensure that all edges $e \subset \alpha \cap \beta$ where $\alpha, \beta \in \A_{\phi}(K)$ are contained in $T_{\phi}$. We define
\[
H_{\mu}^* (\gamma) = \max \Big\{ H_{\mu}(\gamma), \max_{1\le i \le n-2}\{\frac{K}{|\prod_{j \neq i} x_j|}\} \Big\},
\]
and let
\[
J_{\phi}(\gamma)=\{e_m \in E(\gamma): |y_m| \le H_{\mu}^*(\gamma)\}.
\]
We can easily check that with this adjustment, $J(\gamma)$ has the same properties as $I(\gamma)$ in Lemma \ref{lem:directedin} and furthermore, property (1) is satisfied. That is, if $K>2$ and $\alpha, \beta \in \A_{\phi}(K)$ and $e \subset \alpha \cap \beta$, then $e \subset J(\alpha) \cap J(\beta)$.
With this, if $\phi \in \B_{\mu}$, define
\[T_{\phi}= \bigcup_{\gamma \in \A_{\phi}(K)}J(\gamma).
\]

\begin{lem}\label{lem:finiteT}
Let $\phi \in \B_{\mu}$ and suppose $\A_{\phi}(K) \neq \emptyset$ for some $K>2$. Then  $T_{\phi}= \bigcup_{\gamma \in \A_{\phi}(K)}J(\gamma)$ is a finite, connected subtree of $\Delta$.
\end{lem}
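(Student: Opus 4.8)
The plan is to verify three things in turn: first, that for each $\gamma\in\A_{\phi}(K)$ the set $J(\gamma)$ is a non-empty, finite, connected sub-interval of $\gamma$; second, that $T_{\phi}$ is non-empty and finite; and third, that $T_{\phi}$ is connected, which makes it a subtree of $\Delta$ since a connected subgraph of a tree is a tree.

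The substantive point is the first. Fix $\gamma\in\A_{\phi}(K)$; by (B1) we have $\phi(\gamma)\notin[-2,2]$, and I claim also that $\sigma(\gamma)\neq\mu$. Suppose instead $\sigma(\gamma)=\mu$. Then $|\lambda|>1$ and, by Lemma~\ref{lem:geodesicalong}(1), the values $|y_m|$ along $\gamma$ tend to $0$ in one of its two directions. For each edge $e_m$ of $\gamma$ lying in that direction, choose an alternating geodesic $\gamma_m\in\A^{0}(e_m)$ with $\gamma_m\neq\gamma$ (possible, as $|\A^{0}(e_m)|=n-1\ge 2$). Writing $\phi(\gamma_m)$ as a product of coordinates of $\Phi$ at a vertex incident to $e_m$, all of its factors but one are constant along $\gamma$ and their product is non-zero (by (B1), $\phi$ vanishes nowhere on $\C\cup\A$), while the remaining factor equals the value $y_m$; hence $|\phi(\gamma_m)|\to 0$, so $\gamma_m\in\A_{\phi}(K)$ for all sufficiently extreme such $m$. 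These $\gamma_m$ are pairwise distinct because two distinct alternating geodesics meet in at most one edge, contradicting the finiteness of $\A_{\phi}(K)$ in (B2). Knowing $\sigma(\gamma)\neq\mu$, Lemma~\ref{lem:directedin} yields a non-empty interval $I(\gamma)\subset\gamma$ outside which all edges point towards $I(\gamma)$, and Lemma~\ref{lem:geodesicalong}(2) gives $|y_m|\to\infty$ in both directions; since $H_{\mu}^{*}(\gamma)\ge H_{\mu}(\gamma)$ is finite and $m\mapsto|y_m|$ is unimodal (Lemma~\ref{lem:interval}), $J(\gamma)=\{e_m:|y_m|\le H_{\mu}^{*}(\gamma)\}$ is a finite, connected sub-interval of $\gamma$ containing $I(\gamma)$, in particular non-empty.

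Given this, $T_{\phi}=\bigcup_{\gamma\in\A_{\phi}(K)}J(\gamma)$ is a finite union (by (B2)) of non-empty finite subtrees, hence finite and non-empty. For connectedness, recall from Proposition~\ref{prop:e-c} that $\A_{\phi}(K)$ is edge-connected, and recall the built-in property of the modification $H_{\mu}^{*}$: whenever $\alpha,\beta\in\A_{\phi}(K)$ share an edge $e$, the closed edge $\bar e$ lies in $J(\alpha)\cap J(\beta)$. Take any two vertices $p,q\in T_{\phi}$, say $p\in J(\alpha)$ and $q\in J(\beta)$, and choose a chain $\alpha=\gamma_1,\dots,\gamma_s=\beta$ in $\A_{\phi}(K)$ with $\gamma_t$ and $\gamma_{t+1}$ sharing an edge for each $t$. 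Each $J(\gamma_t)$ is connected, and consecutive ones overlap (in a closed edge, or trivially if equal), so $\bigcup_t J(\gamma_t)\subset T_{\phi}$ is connected and contains both $p$ and $q$. As $p,q$ were arbitrary, $T_{\phi}$ is connected, and being a connected subgraph of the tree $\Delta$ it is a subtree.

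The only non-routine ingredient is the claim $\sigma(\gamma)\neq\mu$ on $\A_{\phi}(K)$: this is exactly where the decay $|y_m|\to0$ along an alternating geodesic with $\sigma=\mu$ is converted, through the many transverse alternating geodesics meeting it, into a violation of the finiteness condition (B2). The rest is an assembly of Lemma~\ref{lem:geodesicalong}, Lemma~\ref{lem:interval}, Lemma~\ref{lem:directedin}, Proposition~\ref{prop:e-c}, and the built-in property of the $H_{\mu}^{*}$-construction.
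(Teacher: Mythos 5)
Your proof is correct and follows essentially the same route as the paper: condition (B1) rules out $\phi(\gamma)\in[-2,2]$, condition (B2) rules out $\sigma(\gamma)=\mu$ (via the decay $|y_m|\to 0$ forcing infinitely many transverse geodesics into $\A_{\phi}(K)$ — the paper just cites the argument from the proof of Lemma \ref{lem:escpath} here, and note that its phrase ``$\phi(\gamma)\neq\mu$'' is a slip for $\sigma(\gamma)\neq\mu$, which you correctly identify), so each $J(\gamma)$ is a non-empty finite interval, and connectedness follows from Proposition \ref{prop:e-c} together with the built-in overlap property of the $H_{\mu}^{*}$-construction. No gaps.
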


\begin{proof}
The set $\A_{\phi}(K)$ is non-empty and finite by assumption, and for $\gamma \in \A_{\phi}(K)$, $\phi(\gamma) \not\in [-2,2]$ (otherwise it violates Bowditch condition (B1)), and $\phi(\gamma) \neq  \mu$ (otherwise condition (B2) will be violated, by Lemma \ref{lem:geodesicalong}(i), as argued in the proof of Lemma \ref{lem:escpath}). Hence for each $\gamma \in \A_{\phi}(K)$, $J(\gamma)$ is a (non-empty) finite interval. The fact that the (finite) union of these subintervals $J(\gamma)$ is connected follows from the edge-connectedness of $\A_{\phi}(K)$, the connectedness of $J(\gamma)$, and property (1) that all edges $e \subset \alpha \cap \beta$ where $\alpha, \beta \in \A_{\phi}(K)$ are contained in $J(\alpha)$ and $J(\beta)$.
\end{proof}

\begin{lem}
With the same assumptions and notation as the previous lemma, the tree $T_{\phi}$ satisfies:
\begin{enumerate}
\item If $e \in E(\Delta)$ meets $T_{\phi}$ at a single vertex, then $e$ is $\phi$-directed towards $T_{\phi}$;
\item If $v \in V(\Delta)$ is a fork or a sink, then it lies in $T_{\phi}$. Hence all vertices not in $T_{\phi}$ are merges;
\item All edges $e \in E(\Delta)\setminus E(T_{\phi})$  are $\phi$-directed decisively towards $T_{\phi}$.
\end{enumerate}
\end{lem}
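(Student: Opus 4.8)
The plan is to establish the three assertions in sequence, each feeding the next. \emph{First, the fork case of part~(2).} If $v$ is a fork, say the $\phi$-arrows on the $i$- and $j$-edges at $v$ both point away from $v$, the Fork Lemma (Lemma~\ref{lem:2away}(i)) gives $\alpha:=[v;\{i,j\}]\in\A_\phi(2)\subseteq\A_\phi(K)$. These two edges are exactly the two edges of $\alpha$ incident to $v$; but at any vertex of $\alpha$ lying strictly outside the finite interval $J(\alpha)$ at most one incident $\alpha$-edge can point away from it, because every $\alpha$-edge outside $J(\alpha)$ points towards $J(\alpha)$ (the analogue of Lemma~\ref{lem:directedin} for $J$). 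Hence $v\in V(J(\alpha))\subseteq V(T_\phi)$.

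\emph{Next, part~(1).} I would prove the slightly stronger statement: if $v\in V(T_\phi)$ and $e$ is an edge at $v$ with $e\notin E(T_\phi)$, then $e$ is directed decisively towards $v$. Fix $\gamma=[v;\{a,b\}]\in\A_\phi(K)$ with $v\in V(J(\gamma))$ and let $i$ be the label of $e$. If $i\in\{a,b\}$ then $e\in E(\gamma)\setminus E(J(\gamma))$ lies strictly outside $I(\gamma)\subseteq J(\gamma)$, where $|y_m|$ is strictly monotone by Lemmas~\ref{lem:interval} and~\ref{lem:directedin}; so $e$ is decisive and directed towards $J(\gamma)$, hence towards $v$. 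If $i\notin\{a,b\}$, suppose $e$ points away from $v$ to its head $v'$; then $\gamma':=[v';\{a,b\}]\in\A_\phi(K)$ by Lemma~\ref{lem:2away}(iii), and the edge relation~(\ref{eqn:path-relation}) reads $\phi([v;\{i,a\}])\,\phi([v;\{i,b\}])=\phi(\gamma)+\phi(\gamma')$, of modulus $\le 2K\le K^2$, so one of $[v;\{i,a\}],[v;\{i,b\}]$—say $\gamma_a$—lies in $\A_\phi(K)$. As $\gamma$ and $\gamma_a$ share the $a$-edge at $v$, the first property built into $T_\phi$ (a common edge of two members of $\A_\phi(K)$ lies in both of their intervals $J$) puts that edge in $J(\gamma_a)$, so $v\in V(J(\gamma_a))$; but then $e\in E(\gamma_a)$ points away from $v\in V(J(\gamma_a))$, forcing $e\in E(J(\gamma_a))\subseteq E(T_\phi)$, contrary to hypothesis. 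So $e$ points towards $v$, and $e\notin E(J(\gamma_a))$ puts it in a strictly monotone tail of $\gamma_a$, whence decisive. This proves part~(1).

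\emph{Then, the rest of part~(2).} Given $v\notin V(T_\phi)$, let $v=u_0,e_1,u_1,\dots,e_m,u_m$ ($m\ge1$) be the unique $\Delta$-geodesic from $v$ to $T_\phi$, so $u_m\in V(T_\phi)$ but $u_0,\dots,u_{m-1}\notin V(T_\phi)$ and $e_1,\dots,e_m\notin E(T_\phi)$. Then $e_m$ meets $T_\phi$ in the single vertex $u_m$, so by part~(1) it points from $u_{m-1}$ to $u_m$. If some $e_k$ pointed from $u_k$ to $u_{k-1}$, take $k$ maximal with this property; then $k<m$ and $e_k,e_{k+1}$ both point away from $u_k$, so $u_k$ is a fork, hence $u_k\in V(T_\phi)$ by the fork case—impossible for $k<m$. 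Thus all $e_\ell$ point towards $u_m$; in particular $e_1$ points away from $v$, and it is the only such edge at $v$ (a second one would make $v$ a fork, so put $v$ in $V(T_\phi)$). Hence $v$ is a merge, and every non-merge vertex—in particular every sink and every fork—lies in $V(T_\phi)$.

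\emph{Finally, part~(3).} Let $e\in E(\Delta)\setminus E(T_\phi)$; the component of $\Delta\setminus\{e\}$ meeting $T_\phi$ contains exactly one endpoint $u$ of $e$, write $u'$ for the other. If $u\in V(T_\phi)$, part~(1) already gives $e$ decisive and directed towards $u$, hence towards $T_\phi$. If $u\notin V(T_\phi)$, then by part~(2) $u$ is a merge, and its one out-edge lies on the path from $u$ to $T_\phi$, which stays in $u$'s component and avoids $e$; so $e$ points towards $u$, i.e.\ towards $T_\phi$. For decisiveness: if $e$ lies on some $\gamma\in\A_\phi(K)$ then $e\notin E(J(\gamma))$ puts it in a strictly monotone tail of $\gamma$, so $e$ is decisive; if $e$ lies on no element of $\A_\phi(K)$, assume $e$ (say the $i$-edge between $u$ and $u'$) were indecisive. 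By~(B1) $\phi(\gamma)\notin[-2,2]$ for every $\gamma\ni e$, so such an $e$ forces $\sigma(\gamma)\ne\mu$ (else $|y_m|$ would be strictly monotone, Lemma~\ref{lem:geodesicalong}(1)) and lies inside the finite interval $J(\gamma)$ of each of the $n-1$ geodesics through it; choosing $c\ne i$ so the $c$-edge at $u$ also points into $u$ (possible, since $u$'s out-edge has label $\ne i$), the vertex $u$ becomes a local sink of $\gamma_c=[u;\{i,c\}]$, and combining the quadratic vertex relation~(\ref{eqn:vertex}) at $u$ with the exponential growth along $\gamma_c$ (Lemma~\ref{lem:geodesicalong}(2)) bounds $|\phi(\gamma_c)|$ by a constant depending only on $n$ and $\mu$. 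Taking $K$ above that constant (allowed, since $\A_\phi(K)$ is finite for every $K>2$) gives $\gamma_c\in\A_\phi(K)$, hence $e\in E(J(\gamma_c))\subseteq E(T_\phi)$, a contradiction. The hard part is exactly this last estimate—excluding indecisive edges from $E(\Delta)\setminus E(T_\phi)$—since it is the only place where the interplay between the quadratic vertex relation and the growth description of Lemma~\ref{lem:geodesicalong} is essential; everything else is bookkeeping with Lemma~\ref{lem:2away}, the relation~(\ref{eqn:path-relation}), and the defining properties of the intervals $J(\gamma)$.
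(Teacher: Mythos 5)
Your parts (1) and (2) are sound and essentially the paper's argument: the fork case of (2) via Lemma \ref{lem:2away}(i) and the monotone tails of the intervals $J(\gamma)$, part (1) via Lemma \ref{lem:2away}(iii) together with the defining properties of $J$, and the merge structure of (2) by walking the geodesic to $T_{\phi}$ and ruling out forks. Your detour in part (1) through the relation (\ref{eqn:path-relation}) to produce $\gamma_a\in\A_{\phi}(K)$ is longer than needed (the paper just notes that $J(\gamma')\subset T_{\phi}$ and $T_{\phi}$ is connected, so $e$ would have to lie in $T_{\phi}$), but it is correct as far as the direction claim goes. One small blemish there: once you negate the assumption that $e$ points away from $v$, you no longer know $\gamma_a\in\A_{\phi}(K)$, so the closing sentence about ``a strictly monotone tail of $\gamma_a$'' is unsupported; this does not matter for part (1) itself, which only asserts the direction.

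The genuine gap is in your part (3) treatment of decisiveness for an indecisive edge $e$ lying on no element of $\A_{\phi}(K)$. First, the configuration you set up is the wrong one: you arrange for the $i$-edge and a $c$-edge to both point \emph{into} $u$ and call $u$ a ``local sink of $\gamma_c$.'' A local minimum of $|y_m|$ along $\gamma_c$ does not bound $|\phi(\gamma_c)|$ by any constant depending only on $n$ and $\mu$ (take $|\phi(\gamma_c)|$ huge and $e$ near the bottom of $I(\gamma_c)$); the only estimate of this kind available is the Fork Lemma, and it requires two edges pointing \emph{away} from a common vertex. Second, your patch ``taking $K$ above that constant (allowed, since $\A_\phi(K)$ is finite for every $K>2$)'' is circular: at this stage only (B2) is available, i.e.\ finiteness for \emph{some} $K>2$; finiteness for every $K>2$ is deduced later from Fibonacci growth, whose proof uses the very attracting tree you are constructing. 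The correct argument is both simpler and avoids enlarging $K$: if $e$ (of label $i$, pointing into the merge $u$) is indecisive, then $|x_i|=|x_i'|$, so the inequality $2|x_i|\ge|x_i+x_i'|=|\prod_{j\ne i}x_j|$ used in the proof of Lemma \ref{lem:2away}(i) holds at $u$ for the $i$-edge as well as for $u$'s unique out-edge (of label $d\ne i$); the Fork Lemma computation then gives $[u;\{i,d\}]\in\A_{\phi}(2)\subset\A_{\phi}(K)$, so $e$ \emph{does} lie on an element of $\A_{\phi}(K)$ and your first sub-case applies, forcing $e$ into a strictly monotone tail and contradicting indecisiveness. (If instead $u\in V(T_{\phi})$, the part (1) argument, which only uses $|x_i'|\le|x_i|$ and hence applies to an indecisive edge read in either direction, already forces $e\in E(T_{\phi})$.) With this replacement your proof is complete; note that the paper itself disposes of part (3) with ``follows immediately from the first two parts,'' so you are supplying detail the paper omits, but the detail as written does not work.
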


\begin{proof}

(1) If $e \subset \gamma \in \A_{\phi}(K)$, but $e \not\subset J(\gamma)$, then $e$ is directed towards $T_{\phi}$ by Lemma \ref{lem:directedin} . Otherwise, let $e \in E_{\{k\}}(\Delta)$ with end vertices $v$ and $w$ where $v \in T_{\phi}$. Since $v \subset T_{\phi}$, $\gamma:=[v;\{i,j\}] \in \A_{\phi}(K)$ for some $i,j \neq k$, see Figure \ref{fig:theedgee}. Now if $e$ is $\phi$-directed away from $v$, then $\gamma':=[w;\{i,j\}] \in A_{\phi}(K)$ by Lemma \ref{lem:2away}(iii). Then $T_{\phi}\supset J(\gamma')$ and the connectedness of $T_{\phi}$ implies $e \subset T_{\phi}$, a contradiction. Hence in this case $e$ is also $\phi$-directed towards $T_{\phi}$.


\begin{figure}[ht]
\centering
\def\svgwidth{.61\columnwidth}
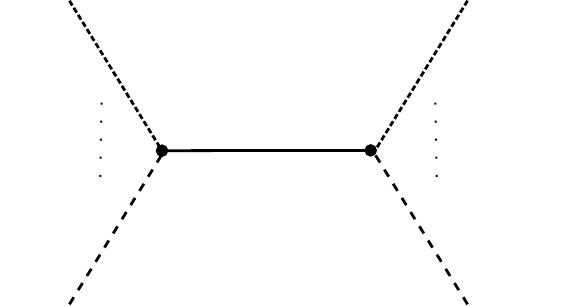
\caption{The edge $e$ where $\bar e \cap T_{\phi}=v$}
\label{fig:theedgee}
\end{figure}

(2) If $v \in V(\Delta)$ is a fork, then $\alpha=[v;\{i,j\}] \in \A_{\phi}(2)$ where the edges $e_i$ and $e_j$ incident to $v$ point away from $v$. Then $e_i, e_j \subset J(\alpha) \subset T_{\phi}$ by Lemma \ref{lem:directedin}. Hence $v \in V(T_{\phi})$. If $v$ is a sink  which is not contained in $T_{\phi}$ then by part (1), the geodesic from $v$ to $T_{\phi}$ contains a fork $v'$ which is not in $T_{\phi}$, again a contradiction.
 Hence all vertices not in $T_{\phi}$ are merges.

(3) This follows immediately from the first two parts.
\end{proof}

The two lemmas above imply:

\begin{prop}\label{prop:attrsubtree}
If $\phi \in \B$ is such that $\A_{\phi}(K)$ is nonempty and finite for some $K>2$, then $T_{\phi}$ defined above is nonempty, finite and $\phi$-attracting.
\end{prop}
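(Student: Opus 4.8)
The plan is to read the proposition off from the two lemmas that immediately precede it, so the proof will be short. First I would dispose of nonemptiness: since $\A_{\phi}(K)$ is assumed nonempty, pick any $\gamma \in \A_{\phi}(K)$; as observed in the proof of Lemma~\ref{lem:finiteT}, the Bowditch conditions force $\phi(\gamma) \notin [-2,2]$ and $\phi(\gamma) \neq \mu$, so $H_{\mu}(\gamma)$, and a fortiori $H_{\mu}^*(\gamma)$, are finite; hence $J(\gamma)$ is a nonempty finite subinterval of $\gamma$ and $T_{\phi} \supseteq J(\gamma) \neq \emptyset$. Finiteness and connectedness of $T_{\phi}$ are then exactly the content of Lemma~\ref{lem:finiteT}, so nothing further is needed there, and in particular $T_{\phi}$ is a genuine subtree of $\vecDelta$.

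It then remains to verify that $T_{\phi}$ is \emph{$\phi$-attracting}, i.e.\ that every edge $\vec e$ of $\vecDelta$ whose underlying edge $e$ lies outside $E(T_{\phi})$ is directed decisively towards $T_{\phi}$. But this is precisely part (3) of the lemma immediately preceding the proposition. I would simply recall the three facts established there: (1) an edge meeting $T_{\phi}$ in a single vertex is $\phi$-directed into $T_{\phi}$; (2) every fork and every sink lies in $T_{\phi}$, so every vertex outside $T_{\phi}$ is a merge; and (3), which follows from the first two, that every edge outside $T_{\phi}$ is directed decisively towards it. Combining (3) with the definition of a $\phi$-attracting subtree gives the conclusion, which together with the first paragraph completes the argument.

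The real work is not in this assembly, which is essentially formal, but in the two lemmas on which it rests, and I expect the main obstacle to have already been absorbed there: proving that $\bigcup_{\gamma \in \A_{\phi}(K)} J(\gamma)$ is connected (using the edge-connectedness of $\A_{\phi}(K)$ from Proposition~\ref{prop:e-c}, its finiteness, and the $H_{\mu}^*$-modification guaranteeing that an edge shared by two geodesics in $\A_{\phi}(K)$ lies in both corresponding intervals $J$), and showing that no edge outside $T_{\phi}$ can point away from it, where the Fork Lemma (Lemma~\ref{lem:2away}(i)) and the no-escaping-ray Lemma~\ref{lem:escpath} — via the existence of a sink, Corollary~\ref{cor:sinkexist} — carry the argument. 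Since those lemmas are already available, the proposition follows at once.
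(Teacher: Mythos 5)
Your proposal is correct and matches the paper exactly: the paper gives no separate argument for this proposition, simply stating that ``the two lemmas above imply'' it, which is precisely the assembly you describe (nonemptiness and finiteness from Lemma~\ref{lem:finiteT}, the attracting property from part (3) of the following lemma).
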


 \subsection{Fibonacci growth}
 Given a function $f:\A \rightarrow {\mathbb R}_{\ge 0}$ and $\A' \subset \A$, we say that $f$ has {\it Fibonacci growth} on $\A'$
 if it has both lower and upper Fibonacci bounds on $\A'$. That is,  there exists a constant $K>0$ such that
\[
\frac{1}{K} F(\gamma) \le f(\gamma) \le K F(\gamma), \quad \hbox{ for all but finitely many }\gamma \in \A'.
\]

\noindent Equivalently,  there exists  constants $K', C >0$  such that
\[
\frac{1}{K'} F(\gamma) -C \le f(\gamma) \le K' F(\gamma) + C, \quad \hbox{ for all }\gamma \in \A'.
\]
 The left inequality is for lower Fibonacci bound, the right inequality for upper Fibonacci bound. The lower Fibonacci bound will be the one that is relevant for our later analysis.

We say that $f$ has {\it Fibonacci growth} if it has Fibonacci growth on all of $\A$. Our aim will be to show that if $\phi \in \B$,
then $\log^+ |\phi|:=\max\{ 0, \log|\phi(\gamma)| \}$ has Fibonacci growth on all of $\A$.

\subsubsection{Upper Fibonacci bounds}
The following proposition shows that the function $\log^+ |\phi|$ always has an upper Fibonacci bound.
\vspace{5pt}

\begin{prop}\label{prop:upperfibonacci}
For any $\mu$ and $\phi \in \Hur_{\mu}$, the induced function $\log^+ |\phi|$ has an upper Fibonacci bound on $\A$.
\end{prop}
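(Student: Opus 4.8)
The plan is to prove the upper bound $f(\gamma):=\log^{+}|\phi(\gamma)|\le K' F(\gamma)+C$ by induction on the distance $d(\gamma)$ from the root $v_{0}$. For the base case there are only $\tfrac12 n(n-1)$ alternating geodesics $\gamma$ with $d(\gamma)=0$ (those through $v_{0}$, each with $F(\gamma)=1$), so $f$ is bounded by a constant $M_{0}$ on this finite set. For the inductive step I would take $d(\gamma)=s>0$ and write $\gamma=[v^{*};\{i,j\}]$, $\alpha=[v^{*};\{i,k\}]$, $\beta=[v^{*};\{j,k\}]$ exactly as in the definition of $F$ in \S\ref{ss:Fibonacci}, where $e_{k}$ is the edge at $v^{*}$ pointing towards $v_{0}$; thus $F(\gamma)=F(\alpha)+F(\beta)$ and $d(\alpha),d(\beta)<s$. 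Let $w$ be the other endpoint of $e_{k}$ and set $\gamma'':=[w;\{i,j\}]$. Applying the edge relation (\ref{eqn:path-relation}) to $e_{k}\in E_{\{k\}}(\Delta)$, with $i,j$ playing the roles of the two auxiliary colours, yields $\phi(\gamma)+\phi(\gamma'')=\phi(\alpha)\phi(\beta)$, so $|\phi(\gamma)|\le|\phi(\alpha)\phi(\beta)|+|\phi(\gamma'')|$ and hence, whenever $|\phi(\gamma)|>1$,
\[
f(\gamma)\le \log 2+\max\{\,f(\alpha)+f(\beta),\ f(\gamma'')\,\}
\]
(if $|\phi(\gamma)|\le 1$ there is nothing to prove).

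The combinatorial heart of the argument is the claim that $d(\gamma'')<s$ and, crucially, $F(\gamma'')\le F(\gamma)-2$, so that $\gamma''$ is covered by the inductive hypothesis \emph{and} has strictly smaller Fibonacci value than $\gamma$. I would verify both facts directly from the definition of $F$ by a short case analysis on the label $l$ of the edge at $w$ pointing towards $v_{0}$: if $l\notin\{i,j,k\}$ then $\alpha=[w;\{i,k\}]$, $\beta=[w;\{j,k\}]$, and unwinding the recursion for $F$ at $w$ gives $F(\gamma'')=F(\gamma)-2F([w;\{k,l\}])$; if $l\in\{i,j\}$, say $l=i$, then the recursion at $w$ gives $F(\beta)=F(\gamma'')+F(\alpha)$, whence $F(\gamma'')=F(\gamma)-2F(\alpha)$. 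Since the subtracted Fibonacci value is always $\ge 1$, in either case $F(\gamma'')\le F(\gamma)-2$. The degenerate case $s=1$, where $w=v_{0}$ and $\alpha,\beta,\gamma''$ all lie in the base set, is treated separately using $F(\gamma)=2$.

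The naive induction with the displayed inequality does \emph{not} close, because $f(\alpha)+f(\beta)$ contributes an additive $2C$ rather than $C$; this is the main obstacle, and it is precisely the reason $F(\gamma'')\le F(\gamma)-2$ (not merely $\le F(\gamma)$) is needed. I would overcome it by strengthening the inductive hypothesis to $f(\gamma)\le A\,F(\gamma)-B$ for all $\gamma$, then choosing $B\ge\log 2$ and $A$ large in terms of $M_{0}$ and $B$ (it suffices that $A\ge M_{0}+B$): with such constants the base case gives $M_{0}\le A-B$, and in the inductive step $\log 2+\max\{AF(\gamma)-2B,\ A(F(\gamma)-2)-B\}\le AF(\gamma)-B$, so the induction closes. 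This yields $\log^{+}|\phi(\gamma)|\le A\,F(\gamma)$ for all $\gamma\in\A$, i.e.\ the desired upper Fibonacci bound; note that $\mu$ is irrelevant here since only the edge relation (\ref{eqn:path-relation}) is used.
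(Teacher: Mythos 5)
Your proof is correct, but it takes a genuinely different route from the paper's. The paper invokes Lemma~2.1.1(i) of \cite{bowditch1998plms} to reduce the upper Fibonacci bound to a purely local inequality at a single vertex, namely $\log^+|\phi(\alpha)|\le\log^+|\phi(\beta)|+\log^+|\phi(\gamma)|+\log^+|\mu|+\log^+2n$ for the three alternating geodesics through that vertex which pairwise share an edge, and then verifies this via the vertex relation $H(\mathbfit{x})=\mu$ (inequality \eqref{upperFinineq}). You instead run a self-contained induction on $d(\gamma)$ with the strengthened hypothesis $\log^+|\phi(\gamma)|\le A\,F(\gamma)-B$, powered by the edge relation \eqref{eqn:path-relation} across the descending edge $e_k$ in the form $\phi(\gamma)+\phi(\gamma'')=\phi(\alpha)\phi(\beta)$, together with the combinatorial fact $F(\gamma'')\le F(\gamma)-2$. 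I checked that fact in both of your cases ($l\notin\{i,j,k\}$ gives $F(\gamma'')=F(\gamma)-2F([w;\{k,l\}])$; $l=i$ gives $F(\beta)=F(\gamma'')+F(\alpha)$, hence $F(\gamma'')=F(\gamma)-2F(\alpha)$), and it is exactly what lets the $-B$ bookkeeping absorb the additive $\log 2$, so the induction closes with $B\ge\log 2$ and $A\ge M_0+B$; the case $s=1$ and the case $|\phi(\gamma)|\le 1$ (where you still need $AF(\gamma)-B\ge 0$, which your choice of constants provides) are also fine. What your route buys: it avoids citing Bowditch's lemma (stated there for the trivalent tree, so the paper is implicitly using its $n$-valent analogue), uses only the edge relation so the recursive constant $\log 2$ is independent of $\mu$ and $n$ (with $\mu$ entering only through the base constant $M_0$), and would apply verbatim to any function on $\A$ satisfying the same edge recursion. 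What the paper's route buys: brevity, and a local inequality that is a clean statement about $n$-tuples satisfying $H(\mathbfit{x})=\mu$.
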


\begin{proof}
By \cite{bowditch1998plms} Lemma 2.1.1(i), it suffices to show that whenever $\alpha, \beta, \gamma \in \A$ meet at a vertex $v$ and pairwise share an edge,  and $d(\beta), d(\gamma) <d(\alpha)$, then
\[
\log^+ |\phi(\alpha)| \le \log^+ |\phi(\beta)| + \log^+ |\phi(\gamma)| + \log^+|\mu| + \log^+ 2n.
\]
 From the definition of $\phi$, with some manipulation, the above inequality holds if
\begin{equation}
\label{upperFinineq}
\log^+ |x_n| \le \log^+ |x_1| + \cdots + \log^+ |x_{n-1}| + \log^+|\mu| + \log^+ 2n
\end{equation}
for any n-tuple $\mathbfit{x}=(x_1, \cdots, x_n)$ satisfying the equation $H(\mathbfit{x}) = \mu$.
\vspace{5pt}

In fact, if $|x_n| \le n |x_i|$ for some $i= 1, \cdots, (n-1)$, then \eqref{upperFinineq} holds naturally. Then suppose $|x_n| \ge n|x_i|$ for all $i= 1, \cdots, (n-1)$, we have
\begin{align*}
|\mu|+ |x_1 x_2 \cdots x_n| & \ge |x_n|^2 - |x_1|^2 - \cdots - |x_{n-1}|^2\\
& =  |x_n|^2/n + (|x_n|^2/n - |x_1|^2) + \cdots + (|x_n|^2/n - |x_{n-1}|^2)\\
& \ge |x_n|^2/n.
\end{align*}
Hence $|x_n|^2 \le 2n |x_1 x_2 \cdots x_n|$ if $|\mu| \le |x_1 x_2 \cdots x_n|$, or otherwise, $|x_n|^2 \le 2n |\mu|$. Thus \eqref{upperFinineq}, and the proposition follows.
\end{proof}

\subsubsection{Lower Fibonacci bounds}
 \begin{prop}\label{prop:lowerfibonacci}
 If $\phi \in \B$ then $\log^{+}|\phi|$ has a lower Fibonacci bound on $\A$.
 \end{prop}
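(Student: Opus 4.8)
The plan is to show that for $\phi \in \B$, the function $\log^+|\phi|$ differs from the Fibonacci function $F$ by at most a multiplicative constant (outside a finite set), building on the structure already established: the finite $\phi$-attracting subtree $T_\phi$ from Proposition \ref{prop:attrsubtree} and the exponential growth of $|y_m|$ along alternating geodesics from Lemma \ref{lem:geodesicalong}. The key point is that, since $T_\phi$ is finite, all but finitely many $\gamma \in \A$ are ``far'' from $T_\phi$, and for such $\gamma$ the values of $|\phi|$ on the three alternating geodesics $\gamma, \alpha, \beta$ meeting at the relevant vertex $v^*$ (as in the definition of $F$, Figure \ref{fig:fibonaccidefi}) grow multiplicatively as we move away from $T_\phi$. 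The strategy mirrors Bowditch \cite{bowditch1998plms} and Tan--Wong--Zhang \cite{tan-wong-zhang2008advm}, so I would invoke a criterion of the following shape: $\log^+|\phi|$ has a lower Fibonacci bound provided there is a constant $c > 0$ such that, for all but finitely many triples $\alpha, \beta, \gamma \in \A$ meeting at a vertex with $d(\beta), d(\gamma) < d(\alpha)$, one has $\log^+|\phi(\alpha)| \ge \log^+|\phi(\beta)| + \log^+|\phi(\gamma)| - c$, together with a uniform lower bound $\log^+|\phi(\gamma)| \ge c' > 0$ on the ``seed'' geodesics near $T_\phi$.

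The heart of the argument is the local inequality. Fix $\gamma = [v^*;\{i,j\}]$ with $d(\gamma) = s$ large (so $v^*$ lies well outside $T_\phi$), let $e_k$ be the edge from $v^*$ towards $v_0$, and set $\alpha = [v^*;\{i,k\}]$, $\beta = [v^*;\{j,k\}]$ as in the definition of $F$. Since $T_\phi$ is $\phi$-attracting and $v^* \notin T_\phi$, the edge $e_k$ is $\phi$-directed towards $T_\phi$, i.e. towards $v_0$'s side; hence (using the notation of \S\ref{ss:relgeodesics}) the two other edges at $v^*$ incident to $\gamma$ are directed \emph{away} from $v^*$ unless $v^*$ is a merge — and one checks that in all cases the edge relation $\phi(\gamma) + \phi(\gamma') = \phi(\alpha)\phi(\beta)$ together with the direction of the edges forces $|\phi(\alpha)\phi(\beta)|$ to be comparable to $\max\{|\phi(\gamma)|,|\phi(\gamma')|\}$, and in fact $|\phi(\gamma)| \ge \tfrac12|\phi(\alpha)\phi(\beta)|$ when the arrow on the $\gamma$-direction points away from $v^*$. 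Taking $\log^+$ and absorbing the $\log 2$ and the finitely many exceptional vertices into the constant gives the required superadditivity. For the base case, condition (B1) gives $|\phi(\gamma)| \notin [-2,2]$ for \emph{all} $\gamma$, and since $\A_\phi(K)$ is finite, there is $K_0 > 2$ with $|\phi(\gamma)| \ge K_0$ for all but finitely many $\gamma$; in particular $\log^+|\phi(\gamma)| = \log|\phi(\gamma)| \ge \log K_0 > 0$ off a finite set, which supplies the uniform positive lower bound needed to start the induction on $d(\gamma)$.

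I expect the main obstacle to be handling the vertices $v^*$ that are merges rather than forks or sinks: there the edge relation does not immediately give $|\phi(\gamma)| \ge \tfrac12|\phi(\alpha)\phi(\beta)|$, and one must instead track $|\phi|$ along the alternating geodesic through $v^*$ using Lemma \ref{lem:geodesicalong} to see that moving one step further from $T_\phi$ multiplies $|\phi|$ by a definite factor $> 1$ (coming from $|\lambda| > 1$, which holds because $\phi(\gamma) \notin [-2,2]$ and $\sigma(\gamma) \neq \mu$ for $\gamma \in \A_\phi(K)$, as noted in the proof of Lemma \ref{lem:finiteT}). Making the bookkeeping uniform — i.e. extracting a single constant $K$ that works simultaneously for the superadditive step, the merge-vertex growth estimate, and the finite exceptional set near $T_\phi$ — is the delicate part, but it is essentially the same bookkeeping carried out in \cite[\S3]{tan-wong-zhang2008advm}, and I would follow that template, citing it where the estimates are routine.
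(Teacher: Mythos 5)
Your overall architecture is the same as the paper's: a superadditive induction seeded at the finite attracting tree $T_\phi$, with Lemma \ref{lem:geodesicalong} brought in for the degenerate case. Two points need repair before this closes.

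First, your criterion is stated with the wrong quantifier. When $\A_{\phi}(2)\neq\emptyset$ the superadditive inequality fails for \emph{infinitely} many triples, not finitely many: if $\delta\in\A_{\phi}(2)$, then at every one of the infinitely many vertices $v_m$ of $\delta$ lying outside $T_\phi$ (at which $\delta$ contains the back edge towards $T_\phi$), $\delta$ occurs as one of the two Fibonacci parents of $n-2$ children, and for those children the bound $\log\tfrac{|\phi(\gamma)|}{2}\ge\log\tfrac{|\phi(\delta)|}{2}+\log\tfrac{|\phi(\alpha)|}{2}$ has a nonpositive first term while $F(\delta)=m+1$ grows linearly along $\delta$; the induction therefore loses an amount growing linearly in $m$, which cannot be absorbed into an additive constant. (Relatedly, superadditivity up to $-c$ with seeds $\ge c'$ yields a lower Fibonacci bound only when $c'>c$; the clean normalization is $\log\tfrac{|\phi|}{2}$, which is exactly superadditive, and then every geodesic entering the recursion must satisfy $|\phi|>2$ --- precisely what fails on $\A_{\phi}(2)$.)

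Second, your diagnosis of where the local inequality fails is inverted. Outside the finite tree $T_\phi$ every vertex is a merge, and at every such vertex the inequality $2|\phi(\gamma)|\ge|\phi(\alpha)\phi(\beta)|$ \emph{does} hold: the back edge $e_k$ is $\phi$-directed decisively towards $T_\phi$, so $|\phi(\gamma)|\ge|\phi(\gamma'')|$ for the sibling geodesic $\gamma''$ across $e_k$, and the edge relation (\ref{eqn:path-relation}) gives the claim. So merges are not the obstacle; the obstacle is exactly the finitely many geodesics $\delta$ with $|\phi(\delta)|\le 2$ described above. The cure is the one you name --- exponential growth of the transverse values $|\phi(\gamma_m^{j})|$ along $\delta$ from Lemma \ref{lem:geodesicalong}(2), using $\phi(\delta)\notin[-2,2]$ and $\sigma(\delta)\neq\mu$ --- but it must be deployed as in the paper's Case 2: decompose the branch $\A^{0-}(\vec{e}_0)$ as $\{\delta\}$ together with the sub-branches $\A^{0-}(\vec{\varepsilon}_m^{\,j})$ rooted along $\delta$, apply the clean superadditive argument to each sub-branch with seeds of size at least $4c(m+1)$, and check that this seed dominates the distortion $F_{v_0}(\alpha)\le 2(m+1)F_{v_m}(\alpha)$ incurred by re-basing the Fibonacci function at $v_m$. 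Without that last comparison the argument does not close.
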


 \begin{proof}
 Since there is a $\phi$-attracting finite subtree $T$ of $\vecDelta$,
 it suffices to prove that for each $\vec{e}_0 \in C(T_{\phi})$,  $\log^{+}|\phi|$ has a lower Fibonacci bound on $\A^{0-}(\vec{e}_0)$. By \eqref{eqn:Fibonacciv}, we may assume that the head of $\vec{e}_0$ is the root $v_0$.


 \vskip 6pt

 Case 1. $\A^{0-}_{\phi}(\vec{e}_0) \cap \A_{\phi}(2)=\emptyset$.

 \vskip 6pt

 Let $c=\min\{\log\frac12|\phi(\alpha)| \mid \alpha \in \A^{0}_{\phi}(e_0)\}$. Then $c>0$ and, for $\alpha \in \A^{0}(e_0)$,
 $$ \log \frac{|\phi(\alpha)|}{2} \ge c = c F_{v_0}(\alpha). $$
 Given $\alpha \in \A^{-}(\vec{e}_0)$, Let $e$ be the edge incident to $\alpha$ on the
 geodesic path from $v_0$ to $\alpha$ and  $\alpha',\beta,\gamma \in \A$ be the paths incident to $e$ as defined in \ref{ss:relgeodesics}, see figure \ref{fig:edgerelation2}. In particular, $d(\alpha'), d(\beta), d(\gamma) < d(\alpha)$, $|\phi(\alpha)|\ge |\phi(\alpha')|$ (since $\vec{E}_{\phi}(e)$ is directed towards $\alpha'$) and $\phi(\alpha)+\phi(\alpha')=\phi(\beta) \phi(\gamma)$.

 Thus we have $2|\phi(\alpha)|\ge|\phi(\beta)\phi(\gamma)|$.
 By induction on the distance of $\alpha$ from $v_0$, we have
 \begin{eqnarray*}
 \log\frac{|\phi(\alpha)|}{2}
 &\ge& \log\frac{|\phi(\beta)|}{2} + \log\frac{|\phi(\gamma)|}{2} \\
 &\ge& c F_{v_0}(\beta) + c F_{v_0}(\gamma) = c F_{v_0}(\alpha).
 \end{eqnarray*}

 \vskip 6pt

 Case 2. $\A^{0-}_{\phi}(\vec{e}_0) \cap \A_{\phi}(2) \neq \emptyset$.

 \vskip 6pt

 We may assume $\A_{\phi}^{0-}(\vec{e}_0) \cap \A_{\phi}(2)=\{\beta\} \subset \A_{\phi}^{0}(e_0)$.
 Suppose the ray $\beta \cap \Delta^{-}(\vec{e_0})$ consists of a sequence of edges $\{e_m\}_{m=1}^{\infty}$ where $\bar{e}_m=\beta \cap Y_m$ for $m \ge 0$, and $v_m=\bar{e}_m \cap Y_{m-1}$ for $m \ge 1$.
 Then each of the directed edges $\{\vec{e}_m\}_{m=1}^{\infty}$ is directed towards $v_m$, see Figure \ref{fig:halfalternating}.
 Let $\vec{\varepsilon}_m^{\,j}, j=1,\cdots,n-2$ be the other $n-2$ edges incident to $v_m$ endowed with $\phi$-directed  arrow which are directed towards $v_m$, and $\gamma_m^{j}$ be the alternating geodesic containing $v_m$, $e_m$ and $\varepsilon_m^{\,j}$ for $j=1,\cdots, n-2$.
 We see that $F_{v_0}(\gamma_m^{j})=m+1$ and $F_{v_m}(\gamma_m^{j})=1$.
 By the exponential growth of $|y_m|$ (and hence of $|\phi(\gamma_m^{j})|$) according to Lemma \ref{lem:geodesicalong} (2), there is some $c>0$ such that
 $$ \log \frac{|\phi(\gamma_m^{j})|}{2} \ge 4 c (m+1) $$
 for all $j \in [n-2]$ and $m \ge 1$.

\begin{figure}[ht]
\centering
\def\svgwidth{\columnwidth}
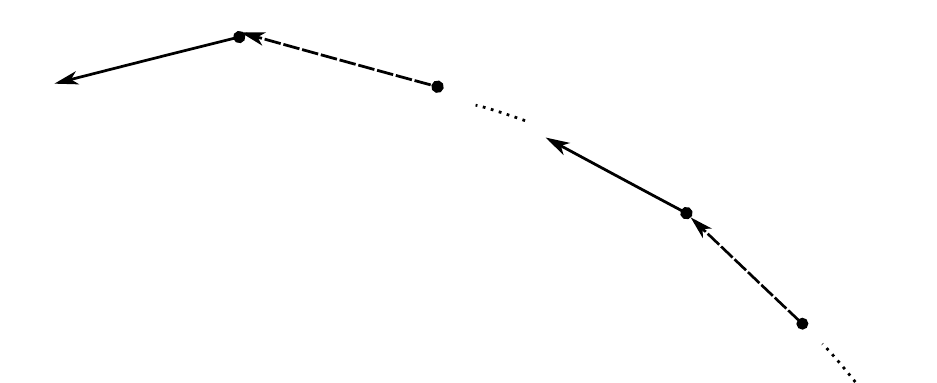
\caption{Part of the alternating geodesic $\beta$}
\label{fig:halfalternating}
\end{figure}

 By Case 1, for any $j \in [n-2]$ and $\alpha \in \A^{0-}(\vec{\varepsilon}_m^{\,j})$, we have
 \begin{eqnarray*}
 \log\frac{|\phi(\alpha)|}{2}
 &\ge& \min\Big\{\log\frac{|\phi(\alpha')|}{2} \mid \alpha' \in \A^{0}(\varepsilon_m^{j})\Big\} F_{v_m}(\alpha) \\
 & = & \min\Big\{\log\frac{|\phi(\gamma_{m-1}^{j})|}{2},\log\frac{|\phi(\gamma_m^{j})|}{2}\Big\} F_{v_m}(\alpha).
 \end{eqnarray*}
 It follows from the fact $F_{v_0}(\alpha) \le 2 (m+1) F_{v_m}(\alpha)$ that
 $$ \log^{+}\frac{|\phi(\alpha)|}{2} \ge 4 c m F_{v_m}(\alpha) \ge 2c \frac{m}{m+1} F_{v_0}(\alpha) \ge c F_{v_0}(\alpha).$$
 Since $\A^{0-}(\vec{e}_0) = \{\beta\} \bigsqcup \big(\bigcup_{j=1}^{n-2} \bigcup_{m \ge 1 }\A^{0-}(\vec{\varepsilon}_m^{\,j}) \big)$, $\log^{+}|\phi|$ has a lower Fibonacci bound on $\A^{0-}(\vec{e_0})$. This concludes the proof of Proposition \ref{prop:lowerfibonacci}.
 \end{proof}

%

 \subsection{Enlarged attracting subtree }

Let $\gamma \in \A_{i,j}$; for convenience, assume $i=n-1$, $j=n$. As in \S \ref{ss:growth}, we have $\gamma=\bigcap_{k=1}^{n-2} X_k $ where $X_k \in \C_k$. Let the sequence of edges of $\gamma$ be  $\{e_m\}_{m \in \Z}$, and $Y_m \in \C$ such that   $Y_m \bigcap \gamma=\bar{e}_m$.
Given $\phi\in\Hur_{\mu}$ and $t \ge 0$,  we define
\[
H_{\mu}^t (\gamma) = \max \Big\{ H_{\mu}(\gamma) + t, \max_{1\le i \le n-2}\{\frac{2+ t}{|\prod_{j \neq i} x_j|}\}  \Big\},
\]
and let
\[
J_{\phi}(\gamma;t)=\bigcup \bar{e}_m \subset \gamma\]
where \[ |y_m| \le H_{\mu}^t(\gamma);
\]
otherwise, $J_{\phi}(\gamma;t)=\gamma$. Now
we define a subtree $T_{\phi}(t)$ of $\Delta$ as the union of $J_{\phi}(\gamma;t)$  as $\gamma$ varies in $\A_{\phi}(2+t)$. Precisely, $e\in E(\Delta)$ is in $T_{\phi}(t)$ if and only if $e=\gamma \cap Y$ for some $\gamma\in\A$ and $Y\in \C$
 such that either $\phi(\gamma) \in [-2,2]$, or else $|\phi(\gamma)| \le 2+t$ and $|y| \le H_{\mu}^t (\gamma)$.

If for $\phi \in \B$, $\A_{\phi}(K)$ is nonempty for some $K >2$, then $T_{\phi}(t)$ is a nonempty, finite, $\phi$-attracting subtree of $\vecDelta$ for any $t \ge K-2$. Moreover,

\begin{prop}\label{prop:enlattrsubtree}
For any fixed $t > 0$, $\phi\in \B$ if and only if $T_{\phi}(t)$ is finite.
\end{prop}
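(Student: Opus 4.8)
The plan is to prove the two implications separately. For the forward direction the work is essentially already done by the Fibonacci growth estimates, while the backward direction is where a genuinely new (but short) argument is needed.

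\emph{Suppose $\phi \in \B$.} First I would apply Propositions \ref{prop:upperfibonacci} and \ref{prop:lowerfibonacci} to get that $\log^{+}|\phi|$ has Fibonacci growth on $\A$; together with the fact that $\{\gamma : F(\gamma) \le M\}$ is finite for every $M$ (immediate from the multiplicity bound $\mu(m) \le n m^{n-2}$ of Lemma \ref{lem:countgamma}), the lower Fibonacci bound forces $\A_{\phi}(2+t)$ to be finite for the given $t>0$. For each $\gamma$ in this finite set, condition (B1) gives $\phi(\gamma) \notin [-2,2]$, and, exactly as in the proof of Lemma \ref{lem:finiteT}, condition (B2) combined with Lemma \ref{lem:geodesicalong}(1) rules out $\sigma(\gamma)=\mu$ (otherwise $|y_{m}| \to 0$ along one end of $\gamma$ and $\gamma$ meets infinitely many members of $\A_{\phi}(2+t)$). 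Hence $H_{\mu}(\gamma) < \infty$, so $J_{\phi}(\gamma;t)$ is a finite interval by the argument behind Lemma \ref{lem:directedin} — the enlargement $H_{\mu}^{t} \ge H_{\mu}$ does not affect finiteness — and $T_{\phi}(t) = \bigcup_{\gamma \in \A_{\phi}(2+t)} J_{\phi}(\gamma;t)$ is a finite union of finite intervals, hence finite.

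\emph{Conversely, suppose $T_{\phi}(t)$ is finite.} To check (B1): if some $\gamma_{0} \in \A$ had $\phi(\gamma_{0}) \in [-2,2]$, then every edge of $\gamma_{0}$ satisfies the membership condition defining $T_{\phi}(t)$, so $\gamma_{0} \subseteq T_{\phi}(t)$, contradicting finiteness; so (B1) holds. To check (B2) with $K = 2+t$, I claim $\A_{\phi}(2+t)$ is finite. Fix $\gamma \in \A_{\phi}(2+t)$; by (B1), $\phi(\gamma) \notin [-2,2]$. If $\sigma(\gamma) = \mu$ then $H_{\mu}^{t}(\gamma) = \infty$, so $J_{\phi}(\gamma;t) = \gamma$, and since $|\phi(\gamma)| \le 2+t$ this again gives $\gamma \subseteq T_{\phi}(t)$, impossible; so $\sigma(\gamma) \ne \mu$, and then Lemma \ref{lem:interval} guarantees $J_{\phi}(\gamma;t)$ is a non-empty interval, hence contains at least one edge, which lies in $T_{\phi}(t)$. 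Thus every member of $\A_{\phi}(2+t)$ passes through an edge of the finite tree $T_{\phi}(t)$; since each edge $e$ lies on exactly $n-1$ alternating geodesics ($|\A^{0}(e)| = n-1$), we conclude $|\A_{\phi}(2+t)| \le (n-1)\,|E(T_{\phi}(t))| < \infty$, so $\phi \in \B$.

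The main obstacle, modest as it is, lies in the backward direction: one must leverage finiteness of the \emph{tree} $T_{\phi}(t)$ to deduce finiteness of the \emph{set} $\A_{\phi}(2+t)$ without circularity, and the key points are (i) the finite-to-one incidence between edges and alternating geodesics, and (ii) the non-emptiness of $J_{\phi}(\gamma;t)$, with the two degenerate cases $\phi(\gamma) \in [-2,2]$ and $\sigma(\gamma) = \mu$ dispatched by observing that each would pull an entire infinite geodesic into $T_{\phi}(t)$.
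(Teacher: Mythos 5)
Your proof is correct, and it supplies precisely the details that the paper omits (the paper merely declares the proposition ``straightforward''): the forward direction via the lower Fibonacci bound of Proposition \ref{prop:lowerfibonacci} to get finiteness of $\A_{\phi}(2+t)$ for the \emph{given} $t$ (not just the $K$ from (B2)), and the converse via the observations that $\phi(\gamma)\in[-2,2]$ or $\sigma(\gamma)=\mu$ would force an entire infinite geodesic into $T_{\phi}(t)$, while each surviving $\gamma\in\A_{\phi}(2+t)$ deposits at least one edge into $T_{\phi}(t)$ and each edge meets only $n-1$ alternating geodesics. This matches the intended argument; no gaps.
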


The proof is straightforward, and follows directly from the definition of $\B$ and $T_{\phi}(t)$. Note that $T_{\phi}(t)=\emptyset$ for some positive $t$ implies $\A_{\phi}(2+t) = \emptyset$, thus the Bowditch conditions (B1) and (B2) are satisfied.

 \subsection{Openness of $\B$}

 With the help of the enlarged attracting subtrees described in the previous subsection,
 we show that the Bowditch sets $\B$ and $\B_{\mu}$ are open subsets of $\Hur$ and $\Hur_{\mu}$, respectively.

 \begin{thm}\label{thm:open}
 The Bowditch set $\B$ is open in $\Hur$, and $\B_{\mu}$ is open in $\Hur_{\mu}$.
 \end{thm}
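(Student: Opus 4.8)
The plan is to prove openness by exhibiting, around any $\phi_0 \in \B$, a neighbourhood in $\Hur$ (or $\Hur_\mu$) all of whose points lie in $\B$; the key device is the enlarged attracting subtree $T_\phi(t)$ from the previous subsection together with Proposition \ref{prop:enlattrsubtree}, which reduces the problem to showing that $T_\phi(t)$ stays finite under small perturbations of $\phi$. Fix $\phi_0 \in \B$ and pick $t>0$ so that $\A_{\phi_0}(2+t)$ is finite (possibly empty); by Proposition \ref{prop:enlattrsubtree}, $T_{\phi_0}:=T_{\phi_0}(t)$ is a finite, $\phi_0$-attracting subtree of $\vec\Delta_{\phi_0}$. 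Because $\Hur$ is identified with $\CC^n$ via the value of $\phi$ at the root (or at any fixed vertex), and the values $\phi(X)$ for $X \in \C$ and $\phi(\gamma)$ for $\gamma \in \A$ within any bounded-distance ball of $\Delta$ are polynomial functions of the root coordinates, the finitely many quantities $\phi(X)$, $\phi(\gamma)$, $H_\mu^t(\gamma)$, $y_m$ appearing in the definition of $T_{\phi_0}$ and of the circular set $C(T_{\phi_0})$ depend continuously on $\phi$.

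The first main step is to show that the attracting property is stable: since $T_{\phi_0}$ is $\phi_0$-attracting, every edge $\vec e \in C(T_{\phi_0})$ is directed \emph{decisively} towards $T_{\phi_0}$, i.e. for $\vec e \leftrightarrow (\{X_1,\dots,\hat X_i,\dots,X_n\}; X_i \to X_i')$ one has the strict inequality $|\phi_0(X_i)| > |\phi_0(X_i')|$; moreover, for each $\gamma \in \A_{\phi_0}(2+t)$ with boundary edges of $J_{\phi_0}(\gamma;t)$, the relevant $|y_m|$ are \emph{strictly} below $H_{\mu}^t(\gamma)$ at the interior endpoints and strictly above as one leaves. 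As $C(T_{\phi_0})$ is finite, there is $\varepsilon>0$ such that for all $\phi$ with $\|\phi-\phi_0\|<\varepsilon$ these strict inequalities persist; hence $T_{\phi_0}$ (as an abstract finite subtree of $\Delta$) is still $\phi$-attracting, and in particular every ray leaving $T_{\phi_0}$ is $\phi$-directed inward. The second step is to upgrade this to control of $T_\phi(t)$ itself: one shrinks $\varepsilon$ further so that the continuity of $\gamma \mapsto \phi(\gamma)$ on the finite set of $\gamma$ meeting $T_{\phi_0}$ (together with the strict version of condition (B1), namely $\phi_0(\gamma) \notin [-2,2]$ for the relevant $\gamma$, and the monotonicity of $|y_m|$ off the interval from Lemma \ref{lem:directedin}) guarantees that for $\|\phi-\phi_0\|<\varepsilon$ any $\gamma \in \A_{\phi}(2+t)$ already meets $T_{\phi_0}$, and that $J_\phi(\gamma;t)$ stays inside a fixed enlargement of $T_{\phi_0}$. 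Consequently $T_\phi(t)$ is contained in a fixed finite subtree, so it is finite, and Proposition \ref{prop:enlattrsubtree} gives $\phi \in \B$.

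Finally, the $\B_\mu$ statement follows by the same argument carried out within $\Hur_\mu$: the vertex relation \eqref{eqn:vertex} is preserved, so $\Hur_\mu$ is a closed subvariety of $\Hur$, and the neighbourhood constructed above intersected with $\Hur_\mu$ witnesses openness there; the function $H_\mu$ already has the correct (fixed) $\mu$ built in, so no change is needed. I expect the main obstacle to be the second step, making precise that \emph{no new} alternating geodesics enter $\A_\phi(2+t)$ outside a fixed finite region after perturbation: a priori there are infinitely many $\gamma \in \A$, and one must rule out that some far-away $\gamma$ with $|\phi_0(\gamma)|$ huge drops below $2+t$; this is exactly where the $\phi_0$-attracting property of $T_{\phi_0}$ — which forces exponential growth of $|\phi_0|$ (equivalently the lower Fibonacci bound, Proposition \ref{prop:lowerfibonacci}) as one moves away from $T_{\phi_0}$ along any ray — is used, since exponential growth with a definite rate survives a small multiplicative perturbation. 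Everything else is routine continuity of finitely many polynomial expressions and preservation of finitely many strict inequalities.
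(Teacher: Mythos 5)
Your overall strategy (perturb $\phi_0$, control the enlarged attracting tree, invoke Proposition \ref{prop:enlattrsubtree}) is the right one and is the paper's as well, but both of your main steps have genuine gaps, and they are really the same gap in two guises: you pass from finitely many verified strict inequalities to a conclusion about infinitely many edges or geodesics without a mechanism for doing so. In Step 1, checking that the finitely many edges of the circular set $C(T_{\phi_0})$ remain decisively directed inward for $\phi$ near $\phi_0$ does \emph{not} make $T_{\phi_0}$ $\phi$-attracting: the attracting property is a condition on \emph{every} edge of $E(\Delta)\setminus E(T_{\phi_0})$, and nothing you have said rules out a sink or fork of $\vec{\Delta}_{\phi}$ far from $T_{\phi_0}$. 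In Step 2, the appeal to ``exponential growth with a definite rate survives a small multiplicative perturbation'' is not justified: the perturbation is a small change of the root coordinates in $\CC^n$, and the values $\phi(\gamma)$ at distant $\gamma$ are high-degree polynomials in those coordinates, so the induced change in $\phi(\gamma)$ is neither multiplicative nor uniformly small over $\A$; a priori some far-away $\gamma$ with $|\phi_0(\gamma)|$ enormous could still have $|\phi(\gamma)|\le 2+t$ after perturbation. (Note also that the lower Fibonacci bound, Proposition \ref{prop:lowerfibonacci}, is proved only under the hypothesis $\phi\in\B$, which is exactly what you are trying to establish for the perturbed map, so you cannot invoke it there.)

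The missing idea, which is how the paper closes both gaps simultaneously, is that $T_{\phi'}(t_0)$ is automatically \emph{connected} for any non-dihedral Hurwitz map $\phi'$, by the edge-connectedness of $\A_{\phi'}(2+t_0)$ (Proposition \ref{prop:e-c}) together with the adjustment built into $H_{\mu}^{t}$; this holds whether or not $\phi'\in\B$. One then fixes $t_1>t_0$ and a larger $t_2$ so that the finite tree $T_{\phi_0}(t_2)$ contains $T_{\phi_0}(t_1)$ together with all edges of $C(T_{\phi_0}(t_1))$, and verifies only \emph{finitely many} strict inequalities --- one for each edge of the finite shell $T_{\phi_0}(t_2)\setminus T_{\phi_0}(t_1)$ --- to conclude that for $\phi'$ sufficiently close to $\phi_0$ the tree $T_{\phi'}(t_0)$ is disjoint from this shell while still containing a vertex of $T_{\phi_0}(t_1)$. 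Connectedness then traps $T_{\phi'}(t_0)$ inside $T_{\phi_0}(t_1)$, so it is finite and $\phi'\in\B$ by Proposition \ref{prop:enlattrsubtree}. No control of $\phi'$ on distant geodesics is ever needed, and your Step 1 becomes unnecessary. I would rework the argument along these lines.
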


 \begin{proof}
 The proof is essentially the same as the proofs for \cite[Theorem 3.16]{bowditch1998plms} 
 with suitable change of notation. We reproduce it here.
 Fix any $\mu$-Hurwitz map $\phi \in \B$, we have $\phi(\gamma) \notin [-2,2]$ for all $\gamma\in \A$ and $T_{\phi}(t)$ is a non-empty finite subtree for $t \ge M(\mu)-2$. For any $\phi'$ in $N_{\varepsilon}(\phi)$, a small open neighborhood of $\phi$, we can assume $\phi'$ is a $\mu'$-Hurwitz map where $\mu'\in N_{\varepsilon'}(\mu)$ and $\varepsilon'$ is close to 0. Denote by $\phi', \sigma'$ the induced functions on $\A$, and let
 \[
 M=\sup_{\mu'' \in N_{\varepsilon'}( \mu)}\{ M(\mu'')\} < +\infty,
 \]
 for sufficiently small $\varepsilon'$.
 Let $T_{\phi'}(t)$ be the constructed subtree for any $t \ge 0$.

Fix $t_1 > t_0 > M - 2$ and choose sufficiently large $t_2 > t_1$ such that $T_{\phi}(t_2)$ contains $T_{\phi}(t_1)$ in its interior. That is, $T_{\phi}(t_2)$ contains $T_{\phi}(t_1)$, together with all the edges of the circular set $C(T_{\phi}(t_1))$. On the one hand, $T_{\phi'}(t_0) \supset T_{\phi}(M-2) \neq \emptyset$ if $\phi'$ is sufficiently close to $\phi$. On the other hand, we see that if $\phi'$ is sufficiently close to $\phi$, then $T_{\phi'}(t_0) \bigcap T_{\phi}(t_2) \subset T_{\phi}(t_1)$. 
For an arbitrary $e \in T_{\phi}(t_2)\backslash T_{\phi}(t_1)$, we claim that $e \notin T_{\phi'}(t_0)$ if $\phi'$ is sufficiently close to $\phi$. In fact, suppose $e=X_1\cap\cdots\cap X_{n-1}$ where $X_1,\cdots,X_{n-1}\in \mathcal{C}$. Write $\gamma_i=X_1\cap\cdots\cap\hat{X}_i\cap\cdots\cap X_{n-1}$ for $i=1,\cdots,n-1$.
Since $e\not \in T_{\phi}(t_1)$, we have for each $i=1,\cdots,n-1$, either $|\phi(\gamma_i)| > 2+ t_1$ or $|\phi(X_i)| > H_{\mu}^{t_1}(\gamma_i)$. If $\phi' \in \Hur$ is sufficiently close to $\phi$, then for each $i=1,\cdots,n-1$, either $|\phi'(\gamma_i)| > 2+ t_1 > 2+ t_0$ or 
$|\phi'(X_i)| > H_{\mu}^{t_1}(\gamma_i) > H_{\mu'}^{t_0}(\gamma_i)$; therefore $e \not \in T_{\phi'}(t_0)$. This proves the claim since there are only finitely many edges in $T_{\phi}(t_2)\backslash T_{\phi}(t_1)$.
By the connectedness of $T_{\phi'}(t_0)$, we have $T_{\phi'}(t_0) \subset T_{\phi}(t_2)$. 
Thus $T_{\phi'}(t_0)$ is finite, which implies that $\phi' \in \B$ if $\phi'$ is sufficiently close to $\phi$.

In the above we have shown that $\B$ is open in $\Hur$; the claim that $\B_{\mu}$ is open in $\Hur_{\mu}$ follows easily. This proves Theorem \ref{thm:open}.
 \end{proof}

\subsection{The diagonal slice}\label{ss:diagonal}
The results in the above sections allow us to write a computer program to determine  approximations of the set $\D$, consisting of those $\mathbfit{x} \in \D$, with associated Hurwitz map $\phi$  where the attracting tree $T_{\phi}$ is within a specified size. A natural slice to consider is the diagonal slice, namely the set of $x \in \CC$ such that $(x,\cdots,x) \in \D$ (see \cite{STY} for the case where $n=3$ where this slice was compared to the Schottky slice). We are grateful to Yasushi Yamashita who has written a program which draws the diagonal slices in the case $n=3$ and $n=4$ given in Figure \ref{fig:diagslices}. As can be seen, there are many interesting features to these slices, most parts of the boundary are fractal  with inward pointing cusps, much like the slices of various subsets of the quasi-Fuchsian space of a surface. In the case of $n=3$ there appears to be a  rather mysterious ``tail'' namely the real line segment $[-2, s)$ where $-2<s<-1$ which does not lie in $\D$ but which appears to lie in its closure $\overline \D$.

The next proposition shows that the diagonal slice contains the exterior of a disk centered at the origin with radius $r=2^{1/(n-2)}$, in particular, it implies our assertion that $\D$ is non-empty.

\begin{prop}\label{prop:diagonal}
Let $n \ge 3$ be fixed. If $|x|^{n-2}>2$, then $(x,\cdots,x) \in \CC^n$ is in $\D$.
\end{prop}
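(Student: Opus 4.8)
The plan is to show that when $|x|^{n-2} > 2$, the Hurwitz map $\phi = \phi_{\mathbfit{a}}$ induced by $\mathbfit{a} = (x, \ldots, x)$ satisfies both Bowditch conditions, with the identity vertex $v_0$ serving as an attracting sink. First I would compute $\phi$ on the alternating geodesics through $v_0$: for any $\gamma = [v_0; \{i,j\}] \in \A$ passing through the root, $\phi(\gamma) = \prod_{k \neq i,j} x_k = x^{n-2}$, so $|\phi(\gamma)| = |x|^{n-2} > 2$, and in particular $\phi(\gamma) \notin [-2,2]$ for these $\gamma$. The key is then to propagate this estimate outward along the tree by showing every edge incident to $v_0$ is $\phi$-directed toward $v_0$, and more generally that $v_0$ is $\phi$-attracting, which will force $\A_\phi(K) \subset$ a small neighborhood of $v_0$ and hence finite, and will give (B1) everywhere via Lemma~\ref{lem:geodesicalong} or Lemma~\ref{lem:escpath}.

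The central step is an induction on distance from $v_0$ showing that at every vertex $v$ with $\Phi(v) = (x_1, \ldots, x_n)$ reached from the root, all coordinates satisfy $|x_k| \ge |x|$ (equivalently, every edge points toward $v_0$, i.e. $v_0$ is a sink and moreover $\phi$-attracting). The base case is $\Phi(v_0) = (x, \ldots, x)$. For the inductive step, consider an edge $e \in E_{\{i\}}(\Delta)$ from $v$ (already known good, so $|x_k| \ge |x|$ for all $k$) to a child $v'$ with new coordinate $x_i' = \prod_{k \neq i} x_k - x_i$. Using $\prod_{k\neq i}|x_k| \ge |x|^{n-1}$ together with $|x|^{n-2} > 2$, one shows $|x_i'| > |x_i|$ whenever, say, $|x_i|$ is not already enormous — more precisely, I expect the clean bookkeeping to run: if $|x_i| \le |x|$ then $|x_i'| \ge |x|^{n-1} - |x| \ge |x|(|x|^{n-2} - 1) > |x|$ since $|x|^{n-2} > 2 > 1$, so the new coordinate is again $\ge |x|$; and if $|x_i|$ is already $\ge |x|$, one checks the edge still points toward $v$ (the modulus cannot decrease below $|x|$), handling the direction claim. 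This gives that $v_0$ is a sink with all coordinates of modulus $\ge |x|$ throughout the orbit, hence $|\phi(\gamma)| = \prod_{k \neq i,j}|x_k| \ge |x|^{n-2} > 2$ for every $\gamma \in \A$, establishing (B1) globally and giving $\A_\phi(K) = \emptyset$ for $2 < K < |x|^{n-2}$...

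I expect the one subtlety requiring care is the direction of edges at vertices where the coordinate being modified already has large modulus: the naive bound $|x_i'| \ge \prod_{k\neq i}|x_k| - |x_i|$ can be very weak when $|x_i|$ is large, so instead I would argue directly about which of $|x_i|, |x_i'|$ is larger using $x_i + x_i' = \prod_{k \neq i} x_k$ and the fact that the geodesic from $v_0$ into the subtree $\Delta^-$ past $e$ cannot produce a coordinate dropping below the threshold — i.e. run the induction so that the invariant is genuinely ``$v_0$ is a sink and $\phi$-attracting below every edge'', invoking Proposition~\ref{prop:sink} once $\A_\phi(K) = \emptyset$ is known. If the direct coordinate induction is awkward, the alternative is: show $\A_\phi(2) = \emptyset$ (no fork can exist, by Lemma~\ref{lem:2away}(i), because at $v_0$ the product of any $n-2$ coordinates has modulus $|x|^{n-2} > 2$, and propagate), then the absence of escaping rays via Corollary~\ref{cor:sinkexist} forces a unique attracting sink and finiteness, so $\phi \in \B$; this is likely the cleanest route and the main obstacle is just verifying (B1) holds at all geodesics not through $v_0$, which again reduces to the coordinate-modulus invariant above.
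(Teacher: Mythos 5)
Your overall strategy is the right one and matches the paper's in outline: compute $|\phi(\gamma)|=|x|^{n-2}>2$ for the alternating geodesics through $v_0$, show the edges at $v_0$ are directed inward via $|x^{n-1}-x|=|x|\,|x^{n-2}-1|\ge|x|(|x|^{n-2}-1)>|x|$, and conclude $\A_{\phi}(K)=\emptyset$ for some $2<K<|x|^{n-2}$, which gives (B1) and (B2) simultaneously. The gap is in the propagation step, and it occurs in both routes you sketch. In the coordinate induction, the invariant ``all coordinates have modulus $\ge|x|$'' is too weak to close the inductive step: for a child $w$ of $v$ across an $i$-edge you need $|x_i(w)|>|x_i(v)|$, but the only estimate the invariant supplies is $|x_i(w)|\ge\prod_{k\ne i}|x_k(v)|-|x_i(v)|\ge|x|^{n-1}-|x_i(v)|$, which is vacuous once $|x_i(v)|$ is large --- exactly the case you defer with ``one checks.'' (This induction can be salvaged, but only by strengthening the hypothesis to control the ratio of $|x_j(v)|$ to $\prod_{k\ne j}|x_k(v)|$ along the edge back to the parent; as written it does not close.) Your fallback route is circular: Corollary \ref{cor:sinkexist} presupposes $\phi\in\B$, and ``no fork can exist \dots\ because at $v_0$ the product of any $n-2$ coordinates has modulus $>2$, and propagate'' only rules out a fork at $v_0$ itself, with ``propagate'' standing in for the entire missing argument.

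The paper closes this with a minimality argument that requires no global coordinate estimate: fix $2<K<|x|^{n-2}$ and suppose $\A_{\phi}(K)\ne\emptyset$; choose $\gamma\in\A_{\phi}(K)$ with $d(\gamma)=t$ minimal, so $t\ge1$ since geodesics through $v_0$ satisfy $|\phi|>K$. On the geodesic path from $v_0$ to $\gamma$, the edge at the $v_0$ end is directed toward $v_0$ by the root computation, while the edge at the $\gamma$ end must be directed away from $v_0$, since otherwise Lemma \ref{lem:2away}(iii) produces an element of $\A_{\phi}(K)$ at distance $<t$. If $t=1$ these are the same edge and we already have a contradiction; if $t\ge2$ some interior vertex of the path is a fork, and Lemma \ref{lem:2away}(i) yields an element of $\A_{\phi}(2)\subset\A_{\phi}(K)$ strictly closer to $v_0$ than $\gamma$, contradicting minimality. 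You should replace your propagation step with this argument.
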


Note that if $x \in \RR$ and $|x|^{n-2} \le 2$, then $(x, \cdots, x) \notin \D$ since condition (B1) is not satisfied.

\begin{proof} Suppose $|x|^{n-2}>2$ and let $\phi$ be the Hurwitz map  induced from $\mathbfit{x}=(x,\cdots, x)$. We will show that there exists $K>2$ such that $\A_{\phi}(K)=\emptyset$, hence the Bowditch conditions are satisfied. For $\gamma \in \A$, recall that $d(\gamma)$ denotes the distance of $\gamma$ from the root vertex $v_0$.  If $d(\gamma)=0$, $|\phi(\gamma)|=|x^{n-2}|>2$.
We claim that all the edges $e_j$ adjacent to $v_0$ are $\phi$-directed towards $v_0$. This follows from
$$|x^{n-1}-x|=|x||x^{n-2}-1|>|x|$$ since
 $$|x^{n-2}-1| \ge |x|^{n-2}-1>1.$$
 Choose $K$ such that $2<K<|x|^{n-2}$.
We claim that $\A_{\phi}(K)=\emptyset$. Otherwise, there exists $\gamma \in \A_{\phi}(K)$ such that $d(\gamma)=t$ is minimal, and from the above discussion, $t=d(\gamma)\ge 2$. Consider the geodesic path from $v_0$ to $\gamma$. As in the proof of Propostion \ref{prop:e-c}, the two edges at the ends of this path are $\phi$-directed outwards. This implies that some vertex on this path is a fork, and from Lemma \ref{lem:2away} (i) this implies that there is a path $\gamma' \in \A_{\phi}(2) \subset \A_{\phi}(K)$ with $d(\gamma')<t$ which contradicts the minimality of $t$. Hence $\A_{\phi}(K)=\emptyset$ as claimed.
\end{proof}

\subsection{The complement of $\D$ in $\CC^n$}\label{ss:complement}
 We have seen that $\D$ is non-empty, here we show that the closure $\overline{\D}$ is not all of $\CC^n$. In particular, $\mathbf{0} \not\in \overline{\D}$.
 Let $B_{\varepsilon}(\mathbf{0})$ denote the ball of radius $\varepsilon$ about $\mathbf{0}$ in $\CC^n$.
 \begin{thm}\label{thm:complementset}
 There exists a positive constant $\varepsilon$ such that $B_{\varepsilon}(\mathbf{0})\cap \D=\emptyset$.
 \end{thm}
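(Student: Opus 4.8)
\textit{Plan of proof.} The cleanest route exploits two facts already available: first, every $\phi\in\B$ satisfies condition (B1), i.e. $\phi(\gamma)\notin[-2,2]$ for all $\gamma\in\A$; second, $\B$ (hence $\D$) is open (Theorem \ref{thm:open}). Let
\[
\mathcal Z \;=\; \{\,\mathbf{a}\in\CC^n \ :\ \phi_{\mathbf{a}}(\gamma)\in[-2,2]\text{ for some }\gamma\in\A\,\}.
\]
By definition $\D\cap\mathcal Z=\emptyset$. Therefore, to produce an $\varepsilon>0$ with $B_\varepsilon(\mathbf{0})\cap\D=\emptyset$ it suffices to prove that $\mathcal Z$ is \emph{dense} in some ball $B_\varepsilon(\mathbf{0})$: an open set disjoint from a dense set is empty. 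So the whole problem is reduced to a density statement about the loci $\phi_{\gamma}^{-1}([-2,2])$ near $\mathbf{0}$.

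The input I would set up next is that smallness of $\mathbf{a}$ propagates a controlled distance through the orbit. From the edge relation, $|\Phi_{\mathbf{a}}(gb_i)_i|\le |\Phi_{\mathbf{a}}(g)_i|+\prod_{j\ne i}|\Phi_{\mathbf{a}}(g)_j|$, so setting $\delta_k=\max\{\|\Phi_{\mathbf{a}}(g)\|_\infty:\ d(g)\le k\}$ one gets $\delta_{k+1}\le\delta_k(1+\delta_k^{\,n-2})$; hence if $\|\mathbf{a}\|_\infty\le\varepsilon$ then $\|\Phi_{\mathbf{a}}(g)\|_\infty<1$ for all $g$ with $d(g)\le R(\varepsilon)$, where $R(\varepsilon)\to\infty$ as $\varepsilon\to 0$. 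Consequently, for $\mathbf{a}$ near $\mathbf{0}$ the values $\phi_{\mathbf a}(\gamma)=\prod_{k\ne i,j}x_k$ are tiny — in particular within distance $\le\varepsilon^{\,n-2}$ of $0\in[-2,2]$ — for every $\gamma$ with $d(\gamma)\le R(\varepsilon)$. The remaining, and main, step is to pass from ``tiny $\phi$-values'' to ``$\phi$-values actually landing in $[-2,2]$ after an arbitrarily small perturbation of $\mathbf{a}$''. Here one picks a vertex $g$ with $d(g)$ large, looks at an alternating geodesic $\gamma\ni g$, and perturbs a single coordinate of $\Phi_{\mathbf a}(g)$ so as to rotate $\phi(\gamma)$ onto the real axis; such a perturbation can be taken of size at most $|\phi_{\mathbf a}(\gamma)|$, which can be made $<\eta$ by choosing $g$ deep — \emph{provided} one knows the arguments of $\phi_{\mathbf a}(\gamma)$ over the tree come arbitrarily close to $0\bmod\pi$. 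This argument-control is where the work lies; I would obtain it from the explicit description in Lemma \ref{lem:geodesicalong} of how $|y_m|$, and hence $\phi$, grows along an alternating geodesic, together with how $\sigma(\gamma)$ and $\phi(\gamma)$ change as $g$ moves across adjacent geodesics (the edge relation \eqref{eqn:path-relation}), keeping in mind that condition (B1), assumed throughout, forces $\phi(\gamma_0)\notin[-2,2]$ so $|\lambda|>1$ and the orbit genuinely spreads out rather than staying periodic.

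I expect this argument-spreading step to be the main obstacle: ``$\phi$ is small on a large finite ball'' must be upgraded to a genuinely infinite (or dense) obstruction, and doing so requires a mild dynamical fact about how the orbit of a point near $\mathbf{0}$ disperses, rather than a purely combinatorial estimate. A fallback route, which sidesteps density but is heavier, is to show $\phi$ cannot satisfy (B2) because $\vec\Delta_\phi$ has infinitely many forks: writing $w_l=\prod_{j\ne l}x_j$, the $l$-edge at a vertex points away from it exactly when $\cos(\arg w_l-\arg x_l)>|w_l|/(2|x_l|)$, and where all coordinates are small the right-hand side is negligible while each $b_i$ flips the sign of every $\cos(\arg w_l-\arg x_l)$; a parity count (using $n\ge 3$) then makes every vertex of one fixed parity in the small region a fork, and by the structure of $T_\phi$ (Proposition \ref{prop:attrsubtree} and Lemma \ref{lem:2away}(i), forks lie in $T_\phi$) this forces $T_\phi$ to be large, contradicting finiteness once the region of validity of the sign-flip is shown to be all of $\Delta$ — again the crux is controlling the error terms outside the region where the coordinates are small.
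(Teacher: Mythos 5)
There is a genuine gap, and it sits exactly at the steps you flag as ``the main obstacle'' in both of your routes; unfortunately those steps are the entire content of the theorem. Your primary route reduces the statement to density, near $\mathbf{0}$, of the locus $\mathcal Z$ where (B1) fails. This is a reduction to something strictly stronger than what is needed and not clearly true: the paper's proof explicitly allows points near $\mathbf{0}$ to fail the Bowditch conditions through (B2) rather than (B1), so $\mathcal Z$ need not be dense in any ball about $\mathbf{0}$. Moreover the perturbation you propose does not have the size you claim: to rotate $\phi(\gamma)=\prod_{k\neq i,j}x_k$ onto the real axis by moving one factor $x_{k_0}$, you must change $x_{k_0}$ by roughly $|x_{k_0}|\cdot|\Delta\arg|$, which is comparable to $\|\mathbfit{a}\|$, not to $|\phi_{\mathbfit a}(\gamma)|$ --- unless you already know the arguments of $\phi(\gamma)$ accumulate at $0 \bmod \pi$, which is precisely the unproven dynamical input. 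Your fallback route stalls for a quantifiable reason: your own recursion $\delta_{k+1}\le\delta_k(1+\delta_k^{\,n-2})$ only keeps all coordinates small out to distance roughly $\delta^{-(n-2)}$ from the root, so the sign-flip/parity argument produces forks only in a large but \emph{finite} ball, hence finitely many elements of $\A_{\phi}(2)$, which contradicts nothing.

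The mechanism the paper uses, and which your sketch misses, is that one does not need to land any $\phi(\gamma)$ in $[-2,2]$, nor to keep the whole orbit small; it suffices to exhibit infinitely many $\gamma\in\A$ with $|\phi(\gamma)|\le K$, and this is a modulus estimate requiring no control of arguments. Smallness is propagated not through the whole tree but along alternating geodesics: if all $n-2$ transverse coordinates of $\gamma$ are $<\varepsilon_2$ and $\phi(\gamma)\notin(-2,2)$, then from $y_m=A\lambda^m+B\lambda^{-m}$ with $|AB|=|\sigma(\gamma)-\mu|/|\phi(\gamma)^2-4|$ small one extracts indices $s$ with $|y_s|,|y_{s+2}|<\varepsilon_2$ (the lemma preceding the theorem, following \cite{ng-tan2007osakajm}). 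Swapping one $X_k$ for $Y_s$ or $Y_{s+2}$ yields new alternating geodesics, adjacent to $\gamma$ but farther from the root, again with all transverse coordinates $<\varepsilon_2$. Iterating produces either some $\delta$ with $\phi(\delta)\in(-2,2)$ (so (B1) fails) or an infinite sequence $\gamma_m$ with $\|\gamma_m\|_{\phi}<\varepsilon_2$, hence $|\phi(\gamma_m)|<\varepsilon_2^{\,n-2}<K$, so $\A_{\phi}(K)$ is infinite and (B2) fails. Either way $\phi\notin\B$ for every $\mathbfit{a}\in B_{\varepsilon}(\mathbf{0})$, with no appeal to openness or density. If you want to salvage your write-up, replace the density reduction by this direct propagation along alternating geodesics.
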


 We first note that for sufficiently small $\varepsilon$,  if $\mathbfit{a}\in B_{\varepsilon}(\mathbf{0})$ and the corresponding Hurwitz map $\phi \in \Hur_0$, then the same computations as in \cite{ng-tan2007osakajm} will show that we can obtain either an infinite descending path in $\vec{\Delta}_{\phi}$, or $\phi(\gamma) \in (-2,2)$ for some $\gamma\in \A$. In either case, $\phi \notin \B$. If $\mu \neq 0$, the argument is a little more delicate, but we can show the following. 

 Suppose that $\phi \in \Hur_{\mu}$ and $\gamma =\bigcap_{k\neq i,j}X_k \in \A_{ij}$ where $X_k \in \C_k$. Let  $\| \gamma \|_{\phi}:=\max_{k\neq i,j}|x_k|$.

 \begin{lem}
 Let $\gamma =\bigcap_{k\neq i,j}X_k \in \A_{ij}$ consists of the sequence of edges $\{e_m\}$ and let $Y_m\in \C$ be such that $Y_m \cap \gamma=\bar{e}_m$ as in \S \ref{ss:growth}.
 There exist positive constants $\varepsilon_1$ and $\varepsilon_2$ such that if  $\phi \in \Hur_{\mu}$ where $\mu <\varepsilon_1$, $\phi(\gamma) \notin(-2,2)$, and $\|\gamma\|_{\phi}<\varepsilon_2$,  then there exists $s\in \Z$ such that $|y_s|, |y_{s+2}|<\varepsilon_2$.
 \end{lem}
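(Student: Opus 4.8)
The plan is to read off the behaviour of $\{y_m\}$ along $\gamma$ from \S\ref{ss:growth}. Write $x=\phi(\gamma)=\prod_{k\neq i,j}x_k$, $\sigma=\sigma(\gamma)=\sum_{k\neq i,j}x_k^2$, and $x=\lambda+\lambda^{-1}$ with $|\lambda|\ge 1$, so that $y_m=A\lambda^m+B\lambda^{-m}$ with $AB=(\sigma-\mu)/(x^2-4)$. Taking $\varepsilon_2<1$, the hypothesis $\|\gamma\|_\phi<\varepsilon_2$ gives $|x|\le\varepsilon_2^{\,n-2}$ and $|\sigma|\le(n-2)\varepsilon_2^2$, hence $|x^2-4|\ge 4-\varepsilon_2^{\,2(n-2)}$, and therefore, using also $|\mu|<\varepsilon_1$,
\[
|AB|\ \le\ \frac{(n-2)\varepsilon_2^2+\varepsilon_1}{\,4-\varepsilon_2^{\,2(n-2)}\,}.
\]
Moreover $\phi(\gamma)\notin(-2,2)$ together with $|x|<2$ forces $\lambda\neq\lambda^{-1}$ and $|\lambda|>1$; and from $|\lambda|-|\lambda|^{-1}\le|\lambda+\lambda^{-1}|=|x|$ one gets $1<|\lambda|\le 1+|x|\le 1+\varepsilon_2^{\,n-2}$. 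In particular $x\neq\pm2$, so we never fall into the linear-growth cases of Lemma~\ref{lem:geodesicalong}.

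The argument then splits according to whether $AB=0$. If $AB=0$, i.e. $\sigma=\mu$, then $y_m=y_0\lambda^{m}$ or $y_m=y_0\lambda^{-m}$, and since $|\lambda|>1$ we have $|y_m|\to 0$ as $m\to-\infty$ (resp. $+\infty$); any $s$ with $|s|$ large then has $|y_s|,|y_{s+2}|<\varepsilon_2$ and we are done. Suppose $AB\neq 0$. By Lemma~\ref{lem:geodesicalong}(2), $|y_m|\to\infty$ as $|m|\to\infty$, so $|y_m|$ attains a minimum, located near $m_0:=\tfrac{\log(|B|/|A|)}{2\log|\lambda|}$, where $|A\lambda^{m}|$ and $|B\lambda^{-m}|$ are each comparable to $\sqrt{|AB|}$. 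The feature to exploit is that $|\lambda|$ is extremely close to $1$: over a bounded window, say $|m-m_0|\le 3$, both $|A\lambda^{m}|$ and $|B\lambda^{-m}|$ stay within a factor $(1+\varepsilon_2^{\,n-2})^{3}=1+O(\varepsilon_2^{\,n-2})$ of $\sqrt{|AB|}$, whereas $\arg(A\lambda^{m})$ and $\arg(B\lambda^{-m})$ move in opposite senses by $\arg\lambda$ per step and $\arg\lambda$ lies within $O(\varepsilon_2^{\,n-2})$ of $\pm\tfrac\pi2$. Consequently, on this window and up to $O(\varepsilon_2^{\,n-2})$, the relative argument $\arg(A\lambda^{m})-\arg(B\lambda^{-m})$ takes only two values differing by $\pi$, one for each parity of $m$; for the parity that puts this relative argument closer to $\pi$, the law of cosines applied to $y_m=A\lambda^{m}+B\lambda^{-m}$ yields
\[
|y_m|^2\ \le\ 2|AB|\,\bigl(1+O(\varepsilon_2^{\,n-2})\bigr).
\]
Choosing $s$ of that parity nearest $m_0$, both $|y_s|$ and $|y_{s+2}|$ obey this bound.

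Combining the two estimates gives
\[
|y_s|,\ |y_{s+2}|\ \le\ \sqrt{\,\frac{2\bigl((n-2)\varepsilon_2^2+\varepsilon_1\bigr)}{4-\varepsilon_2^{\,2(n-2)}}\,}\;\bigl(1+O(\varepsilon_2^{\,n-2})\bigr),
\]
and it remains to fix the constants: first take $\varepsilon_1$ small relative to $\varepsilon_2^2$, then take $\varepsilon_2$ small enough (depending on $n$) that the right-hand side drops below $\varepsilon_2$. This last step is where the real care lies, and it is the part I expect to be the main obstacle: one must keep the absolute numerical constant in front of $\sqrt{|AB|}$ genuinely below the threshold imposed by $|\sigma|\le(n-2)\varepsilon_2^2$, so the crude bound $|y_m|\le|A\lambda^m|+|B\lambda^{-m}|$ at a single $m$ will not do — one has to use the cancellation phenomenon above (that because $|\lambda|$ is within $\varepsilon_2^{\,n-2}$ of $1$, the trajectories $m\mapsto A\lambda^m$ and $m\mapsto B\lambda^{-m}$ barely change size over a bounded window while their arguments sweep nearly a half-turn, forcing one parity class to be almost cancelling) and to track the dependence on $n$, $\varepsilon_1$, $\varepsilon_2$ carefully. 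The remaining ingredients — the degenerate cases $A=0$, $B=0$, and the bookkeeping for $\lambda$ near $+i$ versus $-i$ — are routine.
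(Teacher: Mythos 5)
The paper itself does not supply a proof of this lemma (it is deferred to \cite{ng-tan2007osakajm} and omitted), so I can only judge your argument on its own terms. Your reductions are correct: $|x|\le\varepsilon_2^{\,n-2}$, $|\sigma|\le(n-2)\varepsilon_2^2$, $1<|\lambda|\le 1+\varepsilon_2^{\,n-2}$, $|AB|\le((n-2)\varepsilon_2^2+\varepsilon_1)/(4-\varepsilon_2^{\,2(n-2)})$, and the parity/phase analysis correctly produces an $s$ with $|y_s|^2,|y_{s+2}|^2\le 2|AB|\bigl(1+O(\varepsilon_2^{\,n-2})\bigr)$. The gap is the final step. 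Your bound is asymptotically $\sqrt{(n-2)/2}\;\varepsilon_2$ as $\varepsilon_1\to0$ and $\varepsilon_2\to0$: both sides of the required inequality scale linearly in $\varepsilon_2$, so ``take $\varepsilon_2$ small enough that the right-hand side drops below $\varepsilon_2$'' is not available. The constant $\sqrt{(n-2)/2}$ is $<1$ only for $n=3$; it equals $1$ for $n=4$ (so the correction factors decide the issue) and exceeds $1$ for $n\ge5$, where the argument simply does not close.

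Moreover, this is not repairable by sharpening the phase analysis, because the bound $\min_m|y_m|\ge\sqrt{2|AB|}$ can be attained. Take $\mu=0$ and $x_k=t\,e^{i\pi/(2(n-2))}$ for all $k\ne i,j$, so $x=it^{\,n-2}$ and $\lambda$ is purely imaginary, whence $e^{2im\theta}=(-1)^m$; since $AB$ is fixed but $\arg A$ is free (it is determined by the free initial value $y_0$), one may arrange $\Re(A\bar B)=0$, killing the cross term in $|y_m|^2=|A|^2|\lambda|^{2m}+|B|^2|\lambda|^{-2m}+2(-1)^m\Re(A\bar B)$ for \emph{every} integer $m$. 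Then $|y_m|^2\ge 2|AB|=2(n-2)t^2/(4+t^{2(n-2)})$ for all $m$, so for $n\ge5$ and $t$ close to $\varepsilon_2$ one has $|y_m|>\varepsilon_2$ for all $m$ while all hypotheses of the lemma hold. So either the conclusion has to be weakened to $|y_s|,|y_{s+2}|<C_n\varepsilon_2$ (with the induction in \S\ref{ss:complement} reworked to tolerate the loss), or some input beyond the two-term recursion along a single $\gamma$ must be used; your proposal as written does not identify which, and the step you flagged as ``the main obstacle'' is in fact where the argument fails.
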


 \begin{proof}
 The proof is similar to the proof the main theorem  in \cite{ng-tan2007osakajm} but slightly more technical,  we omit the details.
 \end{proof}

 Now if $\mathbfit{a} \in B_{\varepsilon}(\mathbf{0})$ for sufficiently small $\varepsilon$, with induced Hurwitz map $\phi$ and $\gamma =\bigcap_{k\neq i,j}X_k \in \A_{ij}$ contains the root $v_0$, then $\mu <\varepsilon_1$ and  $\|\gamma\|_{\phi}<\varepsilon_2$. Then either $\phi(\gamma) \in (-2,2)$ in which case $\phi \notin \B$, or by the lemma above, by replacing one of the $X_k$, $k \neq i,j$  by $Y_s$ or $Y_{s+2}$, we deduce that there are at least two $\alpha, \beta \in \A$ which are edge-connected to $\gamma$, but not to each other satisfying $\|\alpha\|_{\phi}<\varepsilon_2$, $\|\beta\|_{\phi}<\varepsilon_2$. Furthermore, in general, because $\alpha$ and $\beta$ are not edge-connected, either $d(\alpha)>d(\gamma)$ or $d(\beta)>d(\gamma)$. We can then inductively produce either an infinite sequence $\gamma_m$ with
 $\|\gamma_m\|_{\phi}<\varepsilon_2$ or some $\delta \in \A$ with $\phi(\delta) \in (-2,2)$. In either case, $\phi \notin \B$ which proves Theorem \ref{thm:complementset}.

 \subsection{Properly discontinuous action of $\Gamma_n$ on $\D$}

 \begin{thm}\label{thm:properdisc}
 The action of $\Gamma_n$ on $\D$ (or $\D_{\mu}$) is properly discontinuous.
 \end{thm}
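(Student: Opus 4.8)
The plan is to establish the standard form of proper discontinuity: for every compact $L \subset \D$ the set $\{g \in \Gamma_n : gL \cap L \neq \emptyset\}$ is finite. Since $\D$ is open in the locally compact space $\CC^n$ this gives proper discontinuity in the usual sense, and the case of $\D_{\mu}$ is the same argument with all data confined to $\Hur_{\mu}$. The first ingredient is the equivariance of the attracting subtree. For $\mathbfit{a} \in \CC^n$ and $g \in \Gamma_n$ one has $\Phi_{g(\mathbfit{a})}(h) = h^{-1}g(\mathbfit{a}) = \Phi_{\mathbfit{a}}(g^{-1}h)$, that is $\Phi_{g(\mathbfit{a})} = \Phi_{\mathbfit{a}} \circ L_{g^{-1}}$, where $L_{g^{-1}}$ denotes left translation by $g^{-1}$ on $V(\Delta) = \Gamma_n$. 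Left translation is a label-preserving automorphism of $\Delta$ and an isometry of its edge metric, so it permutes $\C$ and $\A$, and $\phi_{g(\mathbfit{a})} = \phi_{\mathbfit{a}} \circ L_{g^{-1}}$ on $\C \cup \A$. Passing to moduli, the $\phi$-directed tree and the sets $\A_{\phi}(K)$ transform by $g$: $\vecDelta_{\phi_{g(\mathbfit{a})}} = g \cdot \vecDelta_{\phi_{\mathbfit{a}}}$ and $\A_{\phi_{g(\mathbfit{a})}}(K) = g \cdot \A_{\phi_{\mathbfit{a}}}(K)$. Since the non-empty finite attracting subtree $T_{\phi}$ of Proposition \ref{prop:sink}/Proposition \ref{prop:attrsubtree} is built intrinsically from these data (the unique sink when $\A_{\phi}(K) = \emptyset$, and $\bigcup_{\gamma \in \A_{\phi}(K)} J(\gamma)$ otherwise), it is equivariant: $T_{\phi_{g(\mathbfit{a})}} = g \cdot T_{\phi_{\mathbfit{a}}}$.

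The second ingredient is a uniform bound over compacta. The proof of Theorem \ref{thm:open} establishes more than openness: for each $\phi \in \B$ it produces a neighbourhood $N_{\varepsilon}(\phi) \subset \Hur$ and a fixed finite subtree $\widehat{T}_{\phi} \subset \Delta$ (namely $T_{\phi}(t_2)$ there) such that $T_{\phi'} \subseteq T_{\phi'}(t_0) \subseteq \widehat{T}_{\phi}$ for every $\phi' \in N_{\varepsilon}(\phi) \cap \B$, the first inclusion holding once $t_0$ is large enough in terms of $\mu$ and the fixed $K$. Given a compact $L \subset \D$, regarded via $\Hur \cong \CC^n$ as a compact subset of $\B$, cover it by finitely many neighbourhoods $N_{\varepsilon_1}(\phi_1), \dots, N_{\varepsilon_r}(\phi_r)$ of this type, and set $N := \max_{1 \le i \le r} \max \{ d(v_0, v) : v \in V(\widehat{T}_{\phi_i}) \} < \infty$. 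Then $T_{\phi_{\mathbfit{a}}} \subseteq B_{\Delta}(v_0, N)$, the ball of radius $N$ about $v_0$ in $\Delta$, for every $\mathbfit{a} \in L$.

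Now the conclusion. Suppose $g \in \Gamma_n$ satisfies $\mathbfit{a}, g(\mathbfit{a}) \in L$, and choose any vertex $v \in V(T_{\phi_{\mathbfit{a}}})$ (possible since $T_{\phi_{\mathbfit{a}}} \neq \emptyset$). Then $d(v_0, v) \le N$, while $gv \in g \cdot T_{\phi_{\mathbfit{a}}} = T_{\phi_{g(\mathbfit{a})}} \subseteq B_{\Delta}(v_0, N)$, so $d(v_0, gv) \le N$. As left translation by $g^{-1}$ is an isometry of $\Delta$ sending $v_0$ to the vertex $g^{-1}$ and $gv$ to $v$, we get $d(g^{-1}, v) = d(v_0, gv) \le N$, hence $d(v_0, g^{-1}) \le d(v_0, v) + d(v, g^{-1}) \le 2N$. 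Therefore $g^{-1}$, and with it $g$, lies in the ball of radius $2N$ about $v_0$ in the locally finite tree $\Delta$, a finite set; so $\{g \in \Gamma_n : gL \cap L \neq \emptyset\}$ is finite, which is Theorem \ref{thm:properdisc}.

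The main obstacle I expect is the bookkeeping in the second paragraph: one must verify that the finite subtrees extracted from the proof of Theorem \ref{thm:open} dominate the \emph{canonical} attracting subtrees $T_{\phi'}$ (not merely some enlarged $T_{\phi'}(t)$), so that the equivariance $T_{\phi_{g(\mathbfit{a})}} = g \cdot T_{\phi_{\mathbfit{a}}}$ of the first paragraph applies to exactly the objects being bounded — in particular, checking the inclusion $T_{\phi'} \subseteq T_{\phi'}(t_0)$ for $t_0$ large. The equivariance itself is the conceptual point but is routine given the re-rooting identity $\Phi_{g(\mathbfit{a})} = \Phi_{\mathbfit{a}} \circ L_{g^{-1}}$, and the final paragraph's argument is elementary tree geometry.
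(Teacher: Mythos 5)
Your proof is correct, and it reaches the conclusion by a genuinely different mechanism from the paper's. Both arguments rest on the same technical core extracted from the proof of Theorem \ref{thm:open} --- that for $\phi'$ near $\phi\in\B$ the subtrees $T_{\phi'}(t_0)$ are trapped inside a fixed finite tree $T_{\phi}(t_2)$, which by compactness gives a uniform radius $N$ over $L$. Where you diverge is in how this uniformity is converted into finiteness of $\{g : gL\cap L\neq\emptyset\}$. The paper argues by contradiction: a sequence of distinct $g_m$ forces $d(v_0,g_m^{-1})\to\infty$, and the uniform constant in the lower Fibonacci bound (Proposition \ref{prop:lowerfibonacci}) then makes the coordinates of $g_m(\mathbfit{a}_m)=\Phi_{\mathbfit{a}_m}(g_m^{-1})$ blow up, so the orbit points escape $\mathcal K$. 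You instead observe the equivariance $\Phi_{g(\mathbfit{a})}=\Phi_{\mathbfit{a}}\circ L_{g^{-1}}$, deduce $T_{\phi_{g(\mathbfit{a})}}=g\cdot T_{\phi_{\mathbfit{a}}}$ (legitimate, since $T_{\phi}$ is built root-independently from $\A_{\phi}(K)$ for a fixed $K$, resp.\ from the unique decisively attracting sink, and $\mu$ is invariant), and then confine $g^{-1}$ to the ball of radius $2N$ in the Cayley graph by pure tree geometry. Your route avoids the growth estimates entirely in the final step and is arguably cleaner; the paper's route avoids having to make the attracting subtree canonical/equivariant. The one point you rightly flag --- $T_{\phi'}\subseteq T_{\phi'}(t_0)$ --- does check out: for $t_0\ge K-2$ one has $H_{\mu}^{t_0}\ge H_{\mu}^{*}$ and $\A_{\phi'}(K)\subseteq\A_{\phi'}(2+t_0)$, and in the degenerate case $\A_{\phi'}(K)=\emptyset$ one needs $t_0\ge M(\mu)-2$ so that $T_{\phi'}(t_0)$ is nonempty and attracting and therefore contains the sink; taking $t_0\ge\max\{K,M\}-2$, with $M$ the supremum of $M(\mu'')$ over the relevant compact set of $\mu$-values as in the proof of Theorem \ref{thm:open}, settles both cases.
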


 \begin{proof}
 The proof is essentially the same as that of Theorem 2.3 in \cite{tan-wong-zhang2008advm}.
 We show that for any given compact subset $\mathcal K$ of $\D$,
 the set $\{g\in\Gamma_n \mid (g\mathcal K)\cap\mathcal K\neq\emptyset\}$ is finite.
 Suppose not, then there exists a sequence of distinct elements $(g_m)_{m\in\mathbb N}$ in $\Gamma_n$ and $\mathbfit{a}_m \in \mathcal K$ such that $g_m(\mathbfit{a}_m) \in\mathcal K$ for all $m\in\mathbb N$. Since $\mathcal K$ is compact, by passing to a subsequence, we may assume that
 $\mathbfit{a}_m \rightarrow \mathbfit{a}\in\mathcal K$. Let $\phi=\phi_{\mathbfit{a}}$ and $\phi_m=\phi_{\mathbfit{a}_m}$, $m\in\mathbb N$.
 As in the proof of Theorem \ref{thm:open}, there exist $t_2>t_1>0$ such that the tree $T_{\phi}(t_1)$ is finite,
 and for all sufficiently large $m$, $T_{\phi_m}(t_1)$ is contained in the interior of $T_{\phi}(t_2)$.
 This implies that when $m$ is sufficiently large, all $\log^{+}|\phi_m|$ can have the same constant appearing in the lower Fibonacci bounds.
 Hence $g_m(\mathbfit{a}_m)$ is far away from $\mathcal K$ for sufficiently large $m$, a contradiction. This proves Theorem \ref{thm:properdisc}.
 \end{proof}

 \section{Identities and proofs}\label{s:proof}

 This section is devoted to the proof of the identities. We start with some preliminary results on convergence of various infinite sums.

 \subsection{Convergence of  infinite sums}
 \begin{prop}\label{prop:lowerfibconvergence}
 Given a function $f:\A\rightarrow \mathbb R_{>0}$, if $\log^{+}f$ has a lower Fibonacci bound on $\A$,
 then the infinite sum $\sum_{\gamma\in\A}(f(\gamma))^{-t}$ converges absolutely for all $t>0$.
 \end{prop}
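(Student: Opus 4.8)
The plan is to convert the lower Fibonacci bound on $\log^{+}f$ into an exponential lower bound on $f$ itself, valid for all but finitely many $\gamma\in\A$, and then to dominate $\sum_{\gamma\in\A}(f(\gamma))^{-t}$ by a convergent polynomially-weighted exponential series, using the bound on the multiplicity function supplied by Lemma~\ref{lem:countgamma}.

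First I would unwind the hypothesis: by the definition of lower Fibonacci bound (in its additive form) there are constants $K',C>0$ with
\[
\log^{+}f(\gamma) \;\ge\; \frac{1}{K'}F(\gamma) - C \qquad \text{for all } \gamma \in \A.
\]
Set $\A_0 = \{\gamma \in \A \mid F(\gamma) \le K'C\}$. Since $F$ attains each value $m\in\NN$ only finitely often (indeed $\mu(1)=\tfrac12 n(n-1)$ and $\mu(m)\le n m^{n-2}$ for $m\ge 2$ by Lemma~\ref{lem:countgamma}), the set $\A_0$ is finite, so $\sum_{\gamma\in\A_0}(f(\gamma))^{-t}$ is a finite sum of finite positive terms. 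For $\gamma\notin\A_0$ we have $\tfrac{1}{K'}F(\gamma)-C>0$, hence $\log f(\gamma) = \log^{+}f(\gamma) \ge \tfrac{1}{K'}F(\gamma)-C$, which yields $f(\gamma) \ge e^{-C}\exp\bigl(F(\gamma)/K'\bigr)$ and therefore $(f(\gamma))^{-t} \le e^{Ct}\exp\bigl(-tF(\gamma)/K'\bigr)$ for every $t>0$.

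Next I would sum this estimate after grouping the alternating geodesics according to the value of $F$:
\[
\sum_{\gamma\notin\A_0}(f(\gamma))^{-t}
\;\le\; e^{Ct}\sum_{\gamma\in\A}\exp\!\Bigl(-\tfrac{t}{K'}F(\gamma)\Bigr)
\;=\; e^{Ct}\sum_{m=1}^{\infty}\mu(m)\,\exp\!\Bigl(-\tfrac{tm}{K'}\Bigr).
\]
By Lemma~\ref{lem:countgamma} the last sum is at most $e^{Ct}\bigl(\tfrac12 n(n-1)e^{-t/K'} + n\sum_{m\ge 2}m^{n-2}e^{-tm/K'}\bigr)$, which is finite for every $t>0$ since exponential decay beats polynomial growth (alternatively, $e^{-tm/K'}$ is bounded by a constant times $m^{-n}$, so one may invoke Proposition~\ref{prop:fibconvergence} with $s=n>n-1$). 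Adding back the finite contribution of $\A_0$ gives $\sum_{\gamma\in\A}(f(\gamma))^{-t}<\infty$; since every term is positive this is absolute convergence.

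I do not expect a serious obstacle here. The only point that needs care is that the hypothesis is stated for $\log^{+}f$ rather than $\log f$, so one must discard the finite set $\A_0$ on which the Fibonacci lower bound is vacuous before exponentiating; after that, everything reduces to the convergence of a polynomially-weighted geometric series, which is exactly what the multiplicity estimate of Lemma~\ref{lem:countgamma} was set up to provide.
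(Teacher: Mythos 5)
Your proof is correct and follows essentially the same route as the paper: both arguments convert the lower Fibonacci bound into an exponential lower bound on $f$ away from a finite exceptional set and then control the resulting sum via the multiplicity estimate of Lemma~\ref{lem:countgamma}. The only cosmetic difference is that the paper bounds $\exp(F(\gamma)/K)$ below by $(F(\gamma)/K)^N/N!$ with $Nt>n-1$ and invokes Proposition~\ref{prop:fibconvergence}, whereas you sum the polynomially-weighted geometric series $\sum_m \mu(m)e^{-tm/K'}$ directly (and your treatment of the $\log^{+}$ versus $\log$ point is, if anything, more careful than the paper's).
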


 \begin{proof}
 By definition, there exists a positive constant $K$ such that, for all but finitely many $\gamma \in \A$,
 $\log f(\gamma) \ge K^{-1} F(\gamma)$. Hence
\[
f(\gamma) \ge  \exp (K^{-1} F(\gamma)) >\frac{(K^{-1} F(\gamma))^{N}}{N!} \quad
\hbox{ for all} \quad N \in \mathbb N.
\]
 Choose $N \in \mathbb N$ so that $Nt >n-1$. Then
 $$ \sum_{\gamma\in\A}(f(\gamma))^{-t} \le C \sum_{\gamma\in\A}(F(\gamma))^{-Nt} < +\infty $$
 for some constant $C>0$, where the final inequality follows from Proposition \ref{prop:fibconvergence}.
 \end{proof}

Recall (\S \ref{ss:extendedhurwitz}) that a Hurwitz map $\phi: \C \rightarrow \CC$ can be extended to $T^{|k|}(\Delta) \rightarrow \CC$ for $0 \le k \le n-2$, in particular to $T^{|0|}(\Delta)=V(\Delta) \rightarrow \CC$. We have:

\begin{prop}\label{prop:conv}
 Given $\phi \in \B$ and $t>0$, the infinite sums $\sum_{v\in V(\Delta)}|\phi(v)|^{-t}$ and $\sum_{X\in \C}|\phi(X)|^{-t}$  converge absolutely.
 \end{prop}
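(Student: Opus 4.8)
The plan is to reduce both sums to the fact --- already available from Propositions~\ref{prop:lowerfibonacci}, \ref{prop:lowerfibconvergence} and \ref{prop:fibconvergence} --- that $\sum_{\gamma\in\A}|\phi(\gamma)|^{-s}$ converges for every $s>0$; for the bookkeeping it is convenient to use the cruder consequence that $\sum_{\gamma\in\A}e^{-sF(\gamma)}<+\infty$ for every $s>0$, which follows from Lemma~\ref{lem:countgamma} since $\sum_{m\ge1}\mu(m)e^{-sm}\le\frac12 n(n-1)e^{-s}+\sum_{m\ge2}n\,m^{n-2}e^{-sm}<+\infty$. I would first record the uniform lower bounds coming from $\phi\in\B$. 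Since the finitely many $\gamma\in\A_{\phi}(K)$ have $\phi(\gamma)\notin[-2,2]$ (hence $\phi(\gamma)\ne0$), while every other $\gamma$ has $|\phi(\gamma)|>K>2$, the quantity $\delta:=\inf_{\gamma\in\A}|\phi(\gamma)|$ is positive. Expanding the products of coordinates, for a $p$-edge $e$ with incident vertex $v$ one has the identities $|\phi(\bar{e})|^{\,n-2}=\prod_{q\ne p}\bigl|\phi([v;\{p,q\}])\bigr|$ and $|\phi(v)|^{(n-1)(n-2)/2}=\prod_{\gamma\ni v}|\phi(\gamma)|$, so $|\phi(\bar{e})|\ge\delta^{(n-1)/(n-2)}=:\delta'$ for every $\bar{e}\in T^{|1|}(\Delta)$. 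Finally, exactly as in the proof of Lemma~\ref{lem:finiteT}, only finitely many $\gamma\in\A$ can have $\sigma(\gamma)=\mu$, so Lemma~\ref{lem:geodesicalong}(2) (exponential growth along $\gamma$) applies to all but finitely many alternating geodesics.

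Next I would treat the vertex sum, which is the easier of the two. Fix the root $v_0$. For $v\ne v_0$ let $k(v)$ be the label of the edge from $v$ towards $v_0$ and set $\gamma(v):=[v;\{i,j\}]$ with $i,j$ the two smallest labels $\ne k(v)$; then $v$ is the vertex of $\gamma(v)$ closest to $v_0$, so $v\mapsto\gamma(v)$ is injective, and $F(\gamma(v))\ge d(v)+1\to\infty$. By the identity above, $|\phi(v)|^{(n-1)(n-2)/2}\ge\delta^{\,n(n-1)/2-1}\,|\phi(\gamma(v))|$, and by the lower Fibonacci bound on $\A$ (Proposition~\ref{prop:lowerfibonacci}) we get $|\phi(v)|\ge c_1e^{c_2F(\gamma(v))}$ for all but finitely many $v$, with $c_1,c_2>0$. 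Hence
\[
\sum_{v\in V(\Delta)}|\phi(v)|^{-t}\ \le\ (\text{finite})\ +\ c_1^{-t}\sum_{v}e^{-c_2tF(\gamma(v))}\ \le\ (\text{finite})\ +\ c_1^{-t}\sum_{\gamma\in\A}e^{-c_2tF(\gamma)}\ <\ +\infty,
\]
using the injectivity of $v\mapsto\gamma(v)$ and the crude estimate above.

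Finally, the sum over $\C$. For $n=3$ this is literally the sum over $\A$, so assume $n\ge4$. This is the substantive part, and the obstacle is that $\phi$ on $\C$ is a single coordinate rather than a product of coordinates bounded below: there is no uniform lower bound on $|\phi(X)|$, and the $\C$-sum cannot be dominated term-by-term by the $V$-sum or the $\A$-sum. I would instead establish directly that $\log^{+}|\phi|$ has a lower-Fibonacci-type bound on $\C$ by re-running the argument of Proposition~\ref{prop:lowerfibonacci} one level down: using the finite $\phi$-attracting subtree $T_{\phi}$ (Proposition~\ref{prop:attrsubtree}) and the circular set $C(T_{\phi})$, it suffices to control the elements of $\C$ lying behind each $\vec{e}_0\in C(T_{\phi})$, splitting into the cases $\A^{0-}_{\phi}(\vec{e}_0)\cap\A_{\phi}(2)=\emptyset$ and $\ne\emptyset$ exactly as before, and propagating the bound by the edge relations $\phi(X_i)+\phi(X_i')=\prod_{j\ne i}\phi(X_j)$ together with the exponential growth along alternating geodesics. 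The genuinely new point, and where I expect the main work to lie, is controlling the ``error'' factor $\prod_{l\ne i,j,k}x_l(v^{*})=\phi([v^{*};\{i,j,k\}])$ that relates $\phi(X)$ to $\phi$ of a nearby alternating geodesic through the root-closest vertex $v^{*}=v^{*}(X)$: this must be bounded above in terms of $F$ by means of the upper Fibonacci bound (Proposition~\ref{prop:upperfibonacci}), extended to the products of coordinates (equivalently to $T^{|3|}(\Delta)$) appearing here, so as to obtain $\log|\phi(X)|\ge c_3F(\gamma(X))-C_3$ for an injective assignment $X\mapsto\gamma(X)\in\A$ and all but finitely many $X$. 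Granting this,
\[
\sum_{X\in\C}|\phi(X)|^{-t}\ \le\ (\text{finite})\ +\ \sum_{\gamma\in\A}e^{-c_3tF(\gamma)}\ <\ +\infty
\]
exactly as for the vertex sum. All the remaining steps are the bookkeeping already carried out in Propositions~\ref{prop:fibconvergence}, \ref{prop:lowerfibonacci} and \ref{prop:lowerfibconvergence}.
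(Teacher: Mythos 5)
Your treatment of the vertex sum is correct and is essentially the paper's argument: associate to each $v$ an alternating geodesic $\gamma(v)$ through $v$ with $v$ its closest vertex (the paper measures distance to the attracting tree $T_{\phi}$ rather than to $v_0$, but this is immaterial), use $|\phi(v)|^{(n-1)(n-2)/2}=\prod_{\gamma\ni v}|\phi(\gamma)|\ge \delta^{n(n-1)/2-1}|\phi(\gamma(v))|$ together with the uniform lower bound $\delta>0$ and the lower Fibonacci bound, and sum using injectivity of $v\mapsto\gamma(v)$.

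The sum over $\C$, however, is where your proposal has a genuine gap, and it is twofold. First, the key estimate $\log|\phi(X)|\ge c_3F(\gamma(X))-C_3$ is not proved but only posited (``Granting this\dots''), and the route you sketch for it --- writing $\phi(X)$ as $\phi(\gamma)$ divided by a product of the remaining coordinates and bounding that product \emph{above} via an upper Fibonacci bound --- does not obviously close: the lower Fibonacci constant for $\log|\phi(\gamma)|$ and the upper Fibonacci constant for the error term are unrelated, so the difference $K^{-1}F-K'F'$ need not stay bounded below by a positive multiple of $F$. Second, the premise that forces you into this detour --- that ``the $\C$-sum cannot be dominated term-by-term by the $V$-sum'' --- is false, and the correct observation is exactly what makes the paper's proof short. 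If $X\in\C$ does not meet the finite attracting tree $T_{\phi}$, let $v$ be its vertex closest to $T_{\phi}$ and $e$ the edge from $v$ towards $T_{\phi}$; then $e$ is labeled by the missing color of $X$ (say $n$, so $X=X_n=[v;\{\hat n\}]$), and since $T_{\phi}$ is $\phi$-attracting the edge relation gives $2|x_n|\ge|x_n+x_n'|=|x_1\cdots x_{n-1}|$, hence $2|\phi(X)|^2\ge|\phi(v)|$. This reduces the $\C$-sum term-by-term (with $t$ replaced by $t/2$) to the vertex sum you have already handled, with $X\mapsto v$ finite-to-one. Replacing your Case-1/Case-2 re-run of Proposition \ref{prop:lowerfibonacci} by this one-line consequence of the attracting property completes the proof.
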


 \begin{proof}
 Since $\phi \in \B$, there is a finite $\phi$-attracting subtree $T$ of $\Delta$.
 Consider an arbitrary vertex $v\in V(\Delta) \setminus V(T)$. Let $v'$ be the unique vertex of distance 1 from $v$ which is closer to $T$ and suppose, without loss of generality that the edge $e$ connecting $v$ to $v'$ is labeled by $n$. Write $v=\cap_{k=1}^n X_k$ where $X_k=[v;\{\hat{k}\}] \in \C_k$ so $\bar e=X_1\cap\cdots\cap X_{n-1}$. Note that by construction, for $1 \le i <j \le n-1$, $v$ is the vertex on $[v;\{i,j\}]$ closest to $T$ and similarly, it is  also the vertex on $X_n$ closest to $T$.

Since $T$ is $\phi$-attracting, the $\phi$-induced directed edge $\vec{e}$ is directed towards $v'$.
 Therefore we have
 \begin{equation}\label{eqn:xn}
 2|x_n|\ge |x_1\cdots x_{n-1}| \Longrightarrow 2|x_n|^2\ge |x_1\cdots x_n|.
 \end{equation}

 Since $\phi\in \D$,
 $\phi(\gamma) \neq 0$ for all $\gamma \in \A$ and $\A_{\phi}(K)$ is finite for some $K>2$, so there is a constant $c=c(\phi)>0$ such that $|\phi(\gamma)|\ge c$ for all $\gamma\in \A$.
 Noticing that
 \[
 \prod_{1\le i<j\le n} \big( \prod_{m\neq i,j}x_m \big)=(x_1\cdots x_n)^{(n-1)(n-2)/2}
 \]
 and $n(n-1)/2-1=(n+1)(n-2)/2$, we have
 \begin{equation}\label{eqn:x1ton}
 |x_1\cdots x_n|^{(n-1)(n-2)/2}\ge c^{(n+1)(n-2)/2}|x_3\cdots x_n|.
 \end{equation}
 It follows from (\ref{eqn:x1ton}) and (\ref{eqn:xn}) that
 \begin{eqnarray}
 |\phi(v)|\!\!\!\!&=&\!\!\!\!|x_1\cdots x_n|\ge c^{(n+1)/(n-1)}|\phi([v;\{1,2\}])|^{2/(n-1)(n-2)}, \\
 |\phi(X_n)|\!\!\!\!&=&\!\!\!\!|x_n|\ge 2^{-1/2}c^{(n+1)/2(n-1)}|\phi([v;\{1,2\}])|^{1/(n-1)(n-2)}.
 \end{eqnarray}
 Since $v$ is the vertex in the alternating geodesic $[v;\{1,2\}]$ that is closest to $T$,
 all these alternating paths are distinct as $v$ runs over $V(\Delta)\backslash V(T)$ (with a slight adjustment to the indexing if $e$ is labeled by 1 or 2).
 Thus for any $t>0$, there is some constant $C_1=C_1(\phi)>0$ such that
 $$ \sum_{v\in V(\Delta)}|\phi(v)|^{-t} \le C_1\sum_{\gamma\in\A}|\phi(\gamma)|^{-2t/(n-1)(n-2)}< +\infty. $$
 Similarly, there is some constant $C_2=C_2(\phi)>0$ such that
 $$ \sum_{X\in \C}|\phi(X)|^{-t} \le C_2\sum_{\gamma\in\A}|\phi(\gamma)|^{-t/(n-1)(n-2)}< +\infty,$$
where the final inequality in both cases follows from Proposition \ref{prop:lowerfibconvergence}.
 This concludes the proof of Proposition \ref{prop:conv}.
 \end{proof}

 \subsection{Induced weight $\psi(\vec{e})$ on a directed edge}

 Given a $\mu$-Hurwitz map $\phi$ which takes non-zero values, for a directed edge
 $$ \vec{e}_i \leftrightarrow (\{X_1,\cdots,\hat{X}_i,\cdots,X_n\};X'_i \rightarrow X_i), \quad i=1,\cdots,n $$
(of color $i$) which points towards $v=X_1\cap\cdots\cap X_n$,
 we define a $\phi$-induced weight $\psi(\vec{e}_i)$ by
 \begin{equation}
 \psi(\vec{e}_i)=x_i \big/ \textstyle\prod_{j\neq i} x_j.
 \end{equation}
 It follows from the vertex and edge relations that
 \begin{eqnarray}
 \psi(\vec{e}_i) + \psi(-\vec{e}_i) = 1; && \\
 \sum_{i=1}^{n} \psi(\vec{e}_i) - \frac{\mu}{\phi(v)} = 1. &&
 \end{eqnarray}
 Therefore we have
 \begin{eqnarray}\label{eqn:Y}
 \psi(-\vec{e}_i) = \sum_{j\neq i}^{n} \psi(\vec{e}_j) - \frac{\mu}{\phi(v)}.
 \end{eqnarray}
 Repeated use of (\ref{eqn:Y}) then gives that, for any (finite) subtree $T$ of $\Delta$,
 \begin{equation}\label{eqn:finequality}
 \sum_{\vec{e}\in C(T)} \psi(\vec{e}) - \sum_{v \in V(T)} \frac{\mu}{\phi(v)} = 1.
 \end{equation}

 \subsection{Solving for $\psi(\vec{e})$}\label{ss:solvpsi0vece}

 Given a fixed $\phi \in \Hur_{\mu}$ which assumes non-zero values and any directed edge $\vec{e} \in \vec{E}(\Delta)$, where
 $\vec{e}\leftrightarrow (\{X_1,\cdots,\hat{X_i},\cdots,X_n\}; X'_i\rightarrow X_i)$,
 we have $\psi(\vec{e})=x_i/\prod_{m\neq i}x_m$ and $\psi(-\vec{e})=x'_i/\prod_{m\neq i}x_m$.
 By the vertex equations at $v=\bar e \cap X_i$ and $v'=\bar e \cap X'_i$,
 $\psi(\vec{e})$ and $\psi(-\vec{e})$ are the two roots of the quadratic equation in $x_i/\prod_{m\neq i}x_m$:
 \begin{equation}
 \textstyle \big(x_i\big/\prod_{m\neq i}x_m\big)^2 - \big(x_i\big/\prod_{m\neq i}x_m\big)
 +\big(\sum_{m\neq i}x_m^2-\mu\big)\big/\big(\prod_{m\neq i}x_m^2\big) =0.
 \end{equation}
 Suppose $\vec{e}=\vec{E}_{\phi}(e)$, that is, $|x'_i|\ge |x_i|$. Then $|\psi(\vec{e})|\le |\psi(-\vec{e})|$ and hence we have
 \begin{equation}\label{eqn:psivece}
 \psi(\vec{e}) = \frac12 \textstyle \Big( 1 - \sqrt{1-4\big(\sum_{m\neq i}x_m^2-\mu\big)/\prod_{m\neq i}x_m^2}\,\Big)
 \end{equation}
 where the square root is assumed to have non-negative real part.

 Let $\gamma=[v;\{i,j\}] \in \A$, where $j \neq i$. Recall that $\phi(\gamma)=\prod_{m \neq i,j} x_m$ and the square sum weight $\sigma(\gamma)=\sum_{m\neq i,j}x_m^2$.
 Then we can rewrite (\ref{eqn:psivece}) as:
 \begin{equation}\label{eqn:psivecegamma}
 \psi(\vec{e}) = \frac12 \textstyle \Big( 1 - \sqrt{1-4\big(x_j^2+\sigma(\gamma)-\mu\big)/x_j^2\phi(\gamma)^2}\,\Big)
 \end{equation}


 \subsection{Estimate of $\psi(\vec{e})-\frac{1}{2}h(\phi(\gamma))$}\label{ss:estimates}

 Recall the function $h: {\mathbb C}\backslash[-2,2] \rightarrow \mathbb C$ defined by
 $h(x)=1-\sqrt{1-4/x^2}$.
 With notation as in the previous paragraph, we have

 \begin{eqnarray*}
 \hspace{-12pt} \psi(\vec{e}) -\frac12h(\phi(\gamma))
 &\!\!\!=\!\!\!& \frac12 \bigg( \sqrt{1-\frac{4}{\phi(\gamma)^2}} - \sqrt{ 1-\frac{4\big(x_j^2+\sigma(\gamma)-\mu\big)}{x_j^2\phi(\gamma)^2} } \bigg) \\
 &\!\!\!=\!\!\!& \frac{2}{x_j^2}\frac{\sigma(\gamma)-\mu}{\phi(\gamma)^2}\bigg( \sqrt{1-\frac{4}{\phi(\gamma)^2}}+\sqrt{ 1-\frac{4\big(x_j^2+\sigma(\gamma)-\mu\big)}{x_j^2\phi(\gamma)^2} } \bigg)^{-1}
 \end{eqnarray*}
 and therefore
 \begin{eqnarray*}
 |\psi(\vec{e}) - \frac12h(\phi(\gamma))| \le \frac{2}{|x_j|^2}\frac{|\sigma(\gamma)-\mu|}{|\phi(\gamma)^2|}\bigg(\Re\sqrt{1-\frac{4}{\phi(\gamma)^2}}\bigg)^{-1},
 \end{eqnarray*}
 where $\Re(z)$ is the real part of $z\in\CC$.
 Hence, for any $\phi\in \B_{\mu}$, there is a constant $C=C(\phi)>0$ such that
 \begin{equation}\label{eqn:mainestimate}
 |\psi(\vec{e}) - \frac12 h(\phi(\gamma))| \le C|x_j|^{-2}.
 \end{equation}


 \subsection{Proofs of identities}

 \begin{thm}\label{thm:identityone}
 Suppose $\phi \in \B_{\mu}$. Then we have
 \begin{equation}\label{eqn:identityone}
 \sum_{\gamma \in \A} h(\phi(\gamma)) - \sum_{v \in V(\Delta)} \frac{\mu}{\phi(v)} = 1
 \end{equation}
 where the two infinite sums converge absolutely.

 In particular when $\mu=0$, we have
 \[
 \sum_{\gamma\in\A}  h(\phi(\gamma)) := \sum_{\gamma\in\A} \bigg( 1- \sqrt{1-\frac{4}{\phi(\gamma)^2}} \bigg) = 1.
 \]
 \end{thm}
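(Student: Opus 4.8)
The plan is to follow the template of the generalized McShane identity in \cite{tan-wong-zhang2008advm}, adapted to the $n$-valent Cayley tree, so that the argument splits into an exact finite identity plus a limiting estimate.

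\emph{Convergence.} First I would record that both series converge absolutely. The sum $\sum_{v\in V(\Delta)}|\mu/\phi(v)|$ is trivial when $\mu=0$ and is Proposition~\ref{prop:conv} with $t=1$ otherwise. For $\sum_{\gamma\in\A}|h(\phi(\gamma))|$, condition (B1) gives $\phi(\gamma)\notin[-2,2]$ and (B2) a constant $c>2$ with $|\phi(\gamma)|\ge c$ for all but finitely many $\gamma$; since $h(z)=1-\sqrt{1-4/z^2}=O(|z|^{-2})$ as $|z|\to\infty$ and $h$ is bounded on $\{|z|\ge c\}$, one gets $|h(\phi(\gamma))|\le C|\phi(\gamma)|^{-2}$, and $\sum_{\gamma}|\phi(\gamma)|^{-2}<\infty$ because $\log^{+}|\phi|$ has a lower Fibonacci bound (Proposition~\ref{prop:lowerfibonacci}), so Proposition~\ref{prop:lowerfibconvergence} applies.

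\emph{The exact identity and reduction.} Fix a finite $\phi$-attracting subtree $T_\phi\subset\vecDelta$ (Propositions~\ref{prop:sink} and \ref{prop:attrsubtree}) and choose finite subtrees $T_1\subset T_2\subset\cdots$ with $T_\phi$ in the interior of $T_1$ and $\bigcup_N T_N=\Delta$. Since $T_\phi$ is attracting and $T_\phi\subset T_N$, every $\vec e\in C(T_N)$ equals $\vec E_\phi(e)$ and points towards $T_\phi$, so \eqref{eqn:psivecegamma}--\eqref{eqn:mainestimate} apply to it; moreover no frontier vertex of $T_N$ can be a sink or a fork (it lies outside the attracting tree $T_\phi$), so at each frontier vertex $v$ exactly one incident edge is directed away from $v$, of a colour I denote $c_0(v)$. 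Applying the telescoped vertex--edge identity \eqref{eqn:finequality} to $T_N$ gives
\[
\sum_{\vec e\in C(T_N)}\psi(\vec e)=1+\sum_{v\in V(T_N)}\frac{\mu}{\phi(v)},
\]
whose right side converges to $1+\sum_{v\in V(\Delta)}\mu/\phi(v)$ by the previous paragraph. Hence it suffices to prove $\lim_{N}\sum_{\vec e\in C(T_N)}\psi(\vec e)=\sum_{\gamma\in\A}h(\phi(\gamma))$.

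\emph{The matching.} To $\vec e\in C(T_N)$, with head $v$ and colour $i$, attach the alternating geodesic $\gamma(\vec e):=[v;\{i,c_0(v)\}]$ --- the unique one through $e$ whose second edge at $v$ is the inward one. One checks that $\vec e\mapsto\gamma(\vec e)$ is two-to-one onto the set of alternating geodesics meeting $T_N$: such a $\gamma$ intersects $T_N$ in a finite sub-interval whose two exit edges are exactly the two preimages of $\gamma$. These geodesics exhaust $\A$ as $N\to\infty$, so $\sum_{\gamma\cap T_N\neq\emptyset}h(\phi(\gamma))\to\sum_{\gamma\in\A}h(\phi(\gamma))$ by absolute convergence. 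Next, \eqref{eqn:psivecegamma}--\eqref{eqn:mainestimate} bound the discrepancy $|\psi(\vec e)-\tfrac12 h(\phi(\gamma(\vec e)))|$ by $C|\phi(X)|^{-2}$ for a suitable $X\in\C$ determined by $v$ and $c_0(v)$ and forced far from $T_\phi$; summing over $\vec e\in C(T_N)$ and using $\sum_{X\in\C}|\phi(X)|^{-2}<\infty$ (Proposition~\ref{prop:conv}), this total discrepancy tends to $0$. Combining with the two-to-one correspondence,
\[
\sum_{\vec e\in C(T_N)}\psi(\vec e)=\sum_{\gamma\cap T_N\neq\emptyset}h(\phi(\gamma))+o(1)\ \longrightarrow\ \sum_{\gamma\in\A}h(\phi(\gamma)),
\]
which together with the exact identity proves the theorem; the case $\mu=0$ is immediate.

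\emph{Main obstacle.} The hard part will be the matching step: pinning down the correct geodesic $\gamma(\vec e)$ transverse to each directed edge of $C(T_N)$, proving the two-to-one count at the frontier, and estimating the discrepancy so that all boundary contributions vanish in the limit. This is where the $n$-valent combinatorics genuinely departs from the trivalent case of McShane and Bowditch --- an edge now lies on $n-1$ alternating geodesics rather than two, and one must select the one along the unique inward direction $c_0(v)$ --- while the Fibonacci growth of $\log^{+}|\phi|$, via Propositions~\ref{prop:lowerfibconvergence} and \ref{prop:conv}, is precisely what renders every tail sum absolutely convergent. I expect the remaining details to parallel the corresponding argument in \cite{tan-wong-zhang2008advm}.
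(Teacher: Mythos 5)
Your proposal follows essentially the same route as the paper's proof: the telescoped identity \eqref{eqn:finequality} applied to an exhaustion by finite trees containing the attracting subtree, a two-to-one matching of circular edges to alternating geodesics via the unique inward colour at each frontier vertex, and the discrepancy bound \eqref{eqn:mainestimate} summed against the convergent series $\sum_{X\in\C}|\phi(X)|^{-2}$. One small point of care: take $T_N$ to be the metric $N$-neighbourhood of $T_{\phi}$ (as the paper does), because for an arbitrary exhaustion a geodesic $\gamma$ can meet $T_N$ in an interval having the vertex of $\gamma$ closest to $T_{\phi}$ as an endpoint, in which case one of its two exit edges is matched to a \emph{different} geodesic and the two-to-one count as you state it fails; with metric neighbourhoods $\gamma\cap T_N$ is symmetric about that closest vertex, so the issue does not arise.
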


 \begin{proof}
 Since $\phi \in \B_{\mu}$,
 there is a nonempty $\phi$-attracting subtree $T \subset \Delta$, that is, for each edge $e \in E(\Delta)$ not contained in $T$,
 the $\phi$-induced directed edge $\vec{E}_{\phi}(e)$ is directed towards $T$.
 Let $T_m \subset\Delta$, $m \ge 0$ be the subtree spanned by those vertices of $\Delta$ that are of distance not exceeding
 $m$ from $T$. 
 By (\ref{eqn:finequality}), we have, for all $m \ge 0$,
 \begin{equation}\label{eqn:finequalitym}
 \sum_{\vec{e}\in C(T_m)} \psi(\vec{e}) - \sum_{v \in V(T_m)} \frac{\mu}{\phi(v)} = 1.
 \end{equation}

 Let $\A_m \subset\A$, $m \ge 0$ be the set of those elements of $\A$ that are of distance not exceeding $m$ from $T$.
 Then $\A=\bigcup_{m=0}^{\infty}\A_{m}=\A_0\sqcup \bigsqcup_{m=0}^{\infty}(\A_{m+1}\backslash\A_{m})$.
 Similarly, let $\C(m)\subset \C$, $m \ge 0$ be the set of those elements of $\C$ that are of distance not exceeding $m$ from $T$;
 then we have
 $$ \C=\textstyle\bigcup_{m=0}^{\infty}\C(m)=\C(0)\sqcup \bigsqcup_{m=0}^{\infty}(\C(m+1)\backslash \C(m)). $$

 We claim that, as $m \rightarrow \infty$,
 \begin{equation}\label{eqn:toprove}
 \sum_{\gamma\in\A_m}h(\phi(\gamma)) - \sum_{v \in V(T_{m+1})} \frac{\mu}{\phi(v)} - 1 \rightarrow 0.
 \end{equation}
 The desired identity (\ref{eqn:identityone}) then follows by passing to limits.

 Given an arbitrary $\vec{e}\in C(T_{m+1})$, $m\ge 0$, suppose the underlying edge $e$ is of color $i$ and
 let $v=X_1\cap\cdots\cap X_n$ be the head vertex of $\vec{e}$, where $X_1,\cdots, X_n \in \C$ are indexed by colors.
 Let $e^*$ be the edge incident to $v$ that is closer to $T$ than $e$. Suppose $e^*$ is of color $j$. Then $j\neq i$.
 Let $\gamma=[v;\{i,j\}]\in\A$. Then $\gamma \in \A_{m}$.
 In this way, to each $\vec{e}\in C(T_{m+1})$ is associated a unique element $\gamma\in\A_{m}$.
 Note that each element $\gamma\in\A_{m}$ contains exactly two edges in $C(T_{m+1})$.
 Hence each $\gamma\in\A_{m}$ is associated to exactly two edges, say $\vec{e}$ and $\vec{e}\,'$, in $C(T_{m+1})$.
 On the other hand, to each $\vec{e} \in C(T_{m+1})$ is associated $X_j\in \C_j \bigcap (\C(m+1)\backslash \C(m))$,
 and conversely, each $X \in \C(m+1)\backslash \C(m)$ is associated to exactly $n-1$ edges in $C(T_{m+1})$.

 By (\ref{eqn:finequalitym}) and the above associations, the left side of (\ref{eqn:toprove}) equals
 \begin{eqnarray*}
 & &\sum_{\gamma\in\A_m} h(\phi(\gamma)) - \sum_{\vec{e}\in C(T_{m+1})} \psi(\vec{e}) \\
 &=&\sum_{\gamma\in\A_m} ([\frac12h(\phi(\gamma))-\psi(\vec{e})] + [\frac12 h(\phi(\gamma))-\psi(\vec{e}\,')]) \\
 &=&\sum_{\vec{e}\in C(T_{m+1})} (\frac12 h(\phi(\gamma)) - \psi(\vec{e})).
 \end{eqnarray*}

 Since $\phi\in \B_{\mu}$, by the discussion in \S \ref{ss:estimates}, there is a constant $C=C(\phi)>0$ such that
 $|\frac12 h(\phi(\gamma))-\psi(\vec{e})| \le C|\phi(X_j)|^{-2}$.
 Hence the modulus of left side of (\ref{eqn:toprove}) equals
 \begin{eqnarray*}
 &   & \Big|\sum_{\gamma\in\A_m}h(\phi(\gamma)) - \sum_{\vec{e}\in C(T_{m+1})} \psi(\vec{e})\;\Big| \\
 &\le& \sum_{\vec{e}\in C(T_{m+1})}\big|\frac12 h(\phi(\gamma)) - \psi(\vec{e})\big| \\
 &\le& \sum_{X\in \C(m+1)\backslash \C(m)} (n-1) C |\phi(X)|^{-2} \longrightarrow 0 \quad\quad \text{as} \;\  m \rightarrow \infty
 \end{eqnarray*}
 since the infinite sum $\sum_{X\in \C} |\phi(X)|^{-2}$ converges by Proposition \ref{prop:conv}.
 This proves the claim and thus finishes the proof of Theorem \ref{thm:identityone}.
 \end{proof}


 Theorem \ref{thm:identityone} is the special case of Theorem \ref{thm:identity} when $\mu=0$. We recall the general version  when $\mu \neq 0$:

 Suppose $\phi \in \B_{\mu}$. Then we have
 \begin{equation}\label{eqn:omegaid}
 \sum_{\gamma\in\A} \mathfrak{h}_{\mu}(\gamma): = \sum_{\gamma\in\A} \bigg( 1-\bigg(1+\frac{2 \mu}{n(n-1)(\sigma(\gamma)-\mu)} \bigg) \sqrt{1-\frac{4}{\phi(\gamma)^2}} \bigg) = 1,
 \end{equation}
 where the infinite sum converges absolutely.

 \begin{proof} [Proof of Theorem \ref{thm:identity}]
 We shall obtain the desired identity (\ref{eqn:omegaid}) by rewriting the second infinite sum on the left side of identity (\ref{eqn:identityone}) as follows.

 To each pair $(v,\gamma)$ where $\gamma\in\A$ and $v\in V(\gamma)$, we associate the value $\frac{2}{n(n-1)}\frac{\mu}{\phi(v)}$.
 Note that passing through any vertex $v\in V(\Delta)$, there are exactly $n(n-1)/2$ elements $[v;\{i,j\}]\in \A$, $1\le i<j \le n$.
 Therefore we have
 $$ \sum_{v \in V(\Delta)} \frac{\mu}{\phi(v)} = \frac{2}{n(n-1)}\sum_{\gamma\in\A}\sum_{v\in V(\gamma)}\frac{\mu}{\phi(v)}. $$
 By a summation identity (Proposition 4.1 in \cite{hu-tan-zhang2014mrl}), we have
 $$ \sum_{v\in V(\gamma)}\frac{\mu}{\phi(v)} = \frac{\mu}{\sigma(\gamma)-\mu}\sqrt{1-\frac{4}{\phi(\gamma)^2}}, $$
 from which (\ref{eqn:omegaid}) follows immediately. This completes the proof of Theorem \ref{thm:identity}.
 \end{proof}

%

 \subsection{Variations of identities} Theorem \ref{thm:identity} is a special case (with $q_{ij}=2/n(n-1)$) of the following theorem.

 \begin{thm}\label{thm:omegaidgen}
 Suppose $\phi \in \B_{\mu}$. Choose arbitrary $q_{ij}\in \mathbb{C}$ with $\sum_{1\le i<j \le n} q_{ij}=1$, then we have
 \begin{equation}\label{eqn:omegaidgen}
 \sum_{1\le i<j \le n}\sum_{\gamma\in\A_{ij}} \mathfrak{h}_{\mu}(\gamma,q_{ij}): = \sum_{1\le i<j \le n} \sum_{\gamma\in \A_{ij}} \bigg( 1-\bigg(1+q_{ij} \frac{\mu}{\sigma(\gamma)-\mu} \bigg) \sqrt{1-\frac{4}{\phi(\gamma)^2}} \bigg) = 1,
 \end{equation}
 where the infinite sums converge absolutely.
 \end{thm}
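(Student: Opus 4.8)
The plan is to deduce \eqref{eqn:omegaidgen} from Theorem \ref{thm:identityone} in the same spirit as Theorem \ref{thm:identity} was deduced, the only change being that the vertex contribution $\mu/\phi(v)$ at a vertex $v$ is distributed among the $\binom{n}{2}$ alternating geodesics through $v$ with the prescribed weights $q_{ij}$ rather than with equal weights $2/n(n-1)$. So I would start from the absolutely convergent identity
\[
\sum_{\gamma \in \A} h(\phi(\gamma)) - \sum_{v \in V(\Delta)} \frac{\mu}{\phi(v)} = 1
\]
of Theorem \ref{thm:identityone}, and rewrite the second sum. The combinatorial input is: for each \emph{fixed} pair $\{i,j\}$ with $1\le i<j\le n$, every vertex $v\in V(\Delta)$ lies on exactly one $\{i,j\}$-alternating geodesic, namely $[v;\{i,j\}]$, and distinct members of $\A_{ij}$ are vertex-disjoint (if two shared a vertex $v$ they would both be the unique $\{i,j\}$-component of $v$). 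Thus $\{V(\gamma)\mid\gamma\in\A_{ij}\}$ is a partition of $V(\Delta)$, this being immediate from the description of $\A_{ij}=T^{\{i,j\}}(\Delta)$ as the set of connected components of $\Delta$ with all edges of colour $\neq i,j$ removed. Since $\sum_{v\in V(\Delta)}|\mu/\phi(v)|<\infty$ by Proposition \ref{prop:conv}, I may rearrange to get, for each such pair,
\[
\sum_{v\in V(\Delta)} \frac{\mu}{\phi(v)} = \sum_{\gamma\in\A_{ij}}\sum_{v\in V(\gamma)} \frac{\mu}{\phi(v)} .
\]
Multiplying by $q_{ij}$, summing over all pairs, and using $\sum_{1\le i<j\le n}q_{ij}=1$ then yields
\[
\sum_{v\in V(\Delta)} \frac{\mu}{\phi(v)} = \sum_{1\le i<j\le n} q_{ij}\sum_{\gamma\in\A_{ij}}\sum_{v\in V(\gamma)} \frac{\mu}{\phi(v)} .
\]

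Next I would invoke the per-geodesic summation identity (Proposition 4.1 of \cite{hu-tan-zhang2014mrl}), $\sum_{v\in V(\gamma)}\mu/\phi(v)=\frac{\mu}{\sigma(\gamma)-\mu}\sqrt{1-4/\phi(\gamma)^2}$, which is applicable to every $\gamma\in\A$: indeed $\phi\in\B_{\mu}$ forces $\phi(\gamma)\notin[-2,2]$ by (B1), and $\sigma(\gamma)\neq\mu$ (otherwise, $\phi$ being non-vanishing, the values $|y_m|$ along $\gamma$ would tend to $0$, producing infinitely many alternating geodesics in $\A_{\phi}(K)$, contradicting (B2)). Substituting this into the displayed equality, plugging the result back into the identity of Theorem \ref{thm:identityone}, splitting $\sum_{\gamma\in\A}=\sum_{1\le i<j\le n}\sum_{\gamma\in\A_{ij}}$, and using $h(\phi(\gamma))=1-\sqrt{1-4/\phi(\gamma)^2}$, I obtain exactly
\[
\sum_{1\le i<j\le n}\sum_{\gamma\in\A_{ij}}\Big(1-\big(1+q_{ij}\tfrac{\mu}{\sigma(\gamma)-\mu}\big)\sqrt{1-\tfrac{4}{\phi(\gamma)^2}}\Big)=1,
\]
which is \eqref{eqn:omegaidgen}. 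Absolute convergence is inherited: $\sum_{\gamma}|h(\phi(\gamma))|<\infty$ by Theorem \ref{thm:identityone}, and for each pair $\sum_{\gamma\in\A_{ij}}\big|\tfrac{\mu}{\sigma(\gamma)-\mu}\sqrt{1-4/\phi(\gamma)^2}\big|=\sum_{\gamma\in\A_{ij}}\big|\sum_{v\in V(\gamma)}\mu/\phi(v)\big|\le\sum_{v\in V(\Delta)}|\mu/\phi(v)|<\infty$, so summing over the finitely many pairs is harmless.

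I do not expect a genuine obstacle here; the argument is a bookkeeping refinement of the proof of Theorem \ref{thm:identity}, which is the case $q_{ij}=2/n(n-1)$. The only points demanding a little care are the legitimacy of rearranging the vertex sum into a sum over $\A_{ij}$ and then over $V(\gamma)$, and the fact that the summation identity of \cite{hu-tan-zhang2014mrl} may be applied term by term; both are dispatched by the absolute-convergence estimate of Proposition \ref{prop:conv} together with the (B1)/(B2) analysis excluding $\phi(\gamma)\in[-2,2]$ and $\sigma(\gamma)=\mu$. It is worth remarking, as the statement already does, that the condition $\sum q_{ij}=1$ is exactly what is needed for the constant $1$ on the right-hand side to be reproduced.
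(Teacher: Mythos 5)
Your proposal is correct and follows essentially the same route as the paper: the paper proves Theorem \ref{thm:omegaidgen} by repeating the proof of Theorem \ref{thm:identity} with the weight $2/n(n-1)$ on each pair $(v,\gamma)$ replaced by $q_{ij}$ for $\gamma\in\A_{ij}$, which is exactly your redistribution of the vertex sum using the partition $V(\Delta)=\bigsqcup_{\gamma\in\A_{ij}}V(\gamma)$ and the condition $\sum_{i<j}q_{ij}=1$. Your added care about absolute convergence (via Proposition \ref{prop:conv}) and the applicability of the per-geodesic summation identity (ruling out $\phi(\gamma)\in[-2,2]$ and $\sigma(\gamma)=\mu$ from (B1)/(B2)) matches the justifications the paper supplies in its proof of Theorem \ref{thm:identity} and Lemma \ref{lem:finiteT}.
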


 \begin{proof}
 The proof is essentially the same as the proof of Theorem \ref{thm:identity}. Instead of assigning the same weight $2/n(n-1)$ to all $\gamma \in \A$, we assign the weight $q_{ij}$ to $\gamma \in \A_{ij}$.
 \end{proof}

 We also have the following relative version of the identities, in the case where $\phi$ may not satisfy the Bowditch conditions (B1) and (B2), but they are satisfied for a subset $ \A^{0-}(\vec{e}) \subset \A$ for some $\phi$-directed edge $\vec{e}\in \vec{E}(\Delta)$. Essentially the same proof yields the following:

 \begin{prop}
 For $\phi \in \Hur_{\mu}$ and $e \in E(\Delta)$, let $\vec{e} = \vec{E}_{\phi}(e)$. Suppose that \[\A^{0-}(\vec{e}) \bigcap \phi^{-1}[-2,2] = \emptyset,\]and for some $K >2$,
 \[\A^{-}(\vec{e}) \bigcap \A_{\phi}(K)\] is finite. Then
 \begin{equation}
 \label{eq:edgeidentity}
 \psi(\vec{e}) = \sum_{\gamma \in \A^0(e)} \frac12 \mathfrak{h}_{\mu}(\gamma) + \sum_{\gamma' \in \A^-(\vec{e})}  \mathfrak{h}_{\mu}(\gamma'),
 \end{equation}
 where the infinite sum on the right side converges absolutely.

 In particular when $\mu=0$, we have
 \[
 \psi(\vec{e}) = \sum_{\gamma \in \A^0(e)} \frac12 h(\phi(\gamma)) + \sum_{\gamma' \in \A^-(\vec{e})}  h(\phi(\gamma')).
 \]
 \end{prop}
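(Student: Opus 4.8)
The plan is to localise the arguments of Sections~\ref{s:domainofdis} and~\ref{s:proof} to the ``half-tree'' $\Delta^{0-}(\vec e):=\Delta^-(\vec e)\cup\bar e$ and then to run the telescoping argument of Theorem~\ref{thm:identityone} anchored at the edge $e$ rather than at a global attracting subtree. Write $\vec e\leftrightarrow(\{X_1,\dots,\hat X_i,\dots,X_n\};X_i'\to X_i)$, so that $v:=\bar e\cap X_i$ is the head and $v':=\bar e\cap X_i'$ the tail of $\vec e$, and $\psi(\vec e)=x_i\big/\prod_{m\neq i}x_m$. Since $\A^0(e)$ is finite, the two hypotheses say exactly that the Bowditch conditions hold ``on $\A^{0-}(\vec e)$'': $\phi(\gamma)\notin[-2,2]$ for all $\gamma\in\A^{0-}(\vec e)$, and $\A^{0-}(\vec e)\cap\A_\phi(K)$ is finite.

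The first step is to produce a one-sided attracting subtree. Applying Lemma~\ref{lem:escpath} to a $\phi$-directed ray staying inside $\Delta^-(\vec e)$, the first hypothesis forces such a ray to meet infinitely many elements of $\A_\phi(K)$, all lying in $\A^-(\vec e)$, contradicting the second; so no $\phi$-directed ray escapes while remaining in $\Delta^-(\vec e)$. Carrying out the constructions of Propositions~\ref{prop:sink} and~\ref{prop:attrsubtree} inside $\Delta^-(\vec e)$ and adjoining $\bar e$, I obtain a finite connected subtree $T_0$ with $\bar e\subseteq T_0\subseteq\Delta^{0-}(\vec e)$ such that every edge of $\Delta^-(\vec e)$ not lying in $T_0$ is $\phi$-directed decisively towards $T_0$. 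Running the proof of Proposition~\ref{prop:lowerfibonacci} on the branches of $\Delta^-(\vec e)$ issuing from $T_0$ then yields that $\log^+|\phi|$ has a lower Fibonacci bound on $\A^{0-}(\vec e)$, whence by the localised forms of Propositions~\ref{prop:lowerfibconvergence} and~\ref{prop:conv} the sums $\sum_{\gamma\in\A^{0-}(\vec e)}|\phi(\gamma)|^{-t}$, $\sum_{X\in\C,\,X\subset\Delta^-(\vec e)}|\phi(X)|^{-2}$ and $\sum_{w\in V(\Delta^-(\vec e))}|\phi(w)|^{-1}$ all converge.

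Next I would set up the one-sided telescoping. For a finite subtree $T$ with $\bar e\subseteq T\subseteq\Delta^{0-}(\vec e)$, let $C^-(T)$ denote the set of directed edges whose underlying edge lies in $\Delta^-(\vec e)$ and meets $T$ in exactly the head of the directed edge. Starting from (\ref{eqn:Y}) at $v'$ --- which expresses $\psi(\vec e)$, since $\vec e$ points away from $v'$ --- and iterating (\ref{eqn:Y}) over the remaining vertices of $T\cap\Delta^-(\vec e)$ exactly as in the derivation of (\ref{eqn:finequality}), I would obtain, for every such $T$,
\[
\psi(\vec e)=\sum_{\vec f\in C^-(T)}\psi(\vec f)-\sum_{w\in V(T)\setminus\{v\}}\frac{\mu}{\phi(w)}.
\]
Taking $T=T_m$, the subtree of $\Delta^{0-}(\vec e)$ spanned by the vertices at distance $\le m$ from $T_0$, the edges of $C^-(T_m)$ are $\phi$-decisive and point into $T_m$ once $m$ is large. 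To each such $\vec f$ I attach the alternating geodesic through its head singled out by $f$ and by the edge at that head pointing towards $T_0$, as in the proof of Theorem~\ref{thm:identityone}; then each $\gamma\in\A^-(\vec e)$ is attached to exactly two edges of $C^-(T_m)$, while each $\gamma\in\A^0(e)$ is attached to exactly one, its continuation across $e$ having left $\Delta^{0-}(\vec e)$ --- and this asymmetry is the source of the coefficient $\tfrac12$ in front of the $\A^0(e)$-sum. Using the estimate (\ref{eqn:mainestimate}), $\bigl|\psi(\vec f)-\tfrac12 h(\phi(\gamma))\bigr|\le C|\phi(X)|^{-2}$ for the relevant $X\in\C$ with $X\subset\Delta^-(\vec e)$, together with the convergence of $\sum_{X\subset\Delta^-(\vec e)}|\phi(X)|^{-2}$ from the previous paragraph, I let $m\to\infty$ and regroup by attached geodesic to get
\[
\psi(\vec e)=\sum_{\gamma\in\A^0(e)}\tfrac12 h(\phi(\gamma))+\sum_{\gamma'\in\A^-(\vec e)}h(\phi(\gamma'))-\sum_{w\in V(\Delta^-(\vec e))}\frac{\mu}{\phi(w)},
\]
all infinite sums converging absolutely. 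This is already the assertion when $\mu=0$. For general $\mu$ the last sum is rewritten as in the final step of the proof of Theorem~\ref{thm:identity}: every vertex of $V(\Delta^-(\vec e))$ lies on exactly $\tfrac12 n(n-1)$ alternating geodesics, all of them in $\A^{0-}(\vec e)$, so $\sum_{w\in V(\Delta^-(\vec e))}\mu/\phi(w)$ can be redistributed over pairs (vertex, geodesic through it) and evaluated via the summation identity of~\cite{hu-tan-zhang2014mrl} for the full geodesics $\gamma'\in\A^-(\vec e)$, together with its one-sided analogue --- obtained by the same telescoping of $1/(y_my_{m+1})$ along a ray --- for the geodesics $\gamma\in\A^0(e)$, producing the identity with $\mathfrak h_\mu$.

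The genuine content is concentrated in the first two steps: checking that hypotheses~(i) and~(ii) are precisely what is needed to run the attracting-subtree and Fibonacci-growth machinery of Section~\ref{s:domainofdis} inside $\Delta^{0-}(\vec e)$, and in particular that no $\phi$-directed ray can escape while staying in $\Delta^-(\vec e)$; once that is in place, the telescoping and the passage to the limit are essentially a transcription of the proof of Theorem~\ref{thm:identityone}. The point I expect to require the most care is the bookkeeping near the boundary edge $e$ when $\mu\neq0$ --- namely the precise way in which the vertices of $\Delta^-(\vec e)$ are apportioned between the full geodesics of $\A^-(\vec e)$ and the half-geodesics of the elements of $\A^0(e)$ lying in the half-tree --- and this is the step I would write out most carefully.
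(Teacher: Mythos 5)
Your localization of the attracting--subtree and Fibonacci machinery to $\Delta^{0-}(\vec{e})$, the one-sided telescoping of (\ref{eqn:Y}) starting at the tail of $\vec{e}$, and the counting which produces the coefficient $\tfrac12$ on $\A^0(e)$ are all correct, and they establish the intermediate formula
\[
\psi(\vec{e})=\sum_{\gamma\in\A^0(e)}\tfrac12\,h(\phi(\gamma))+\sum_{\gamma'\in\A^-(\vec{e})}h(\phi(\gamma'))-\sum_{w\in V(\Delta^-(\vec{e}))}\frac{\mu}{\phi(w)},
\]
which in particular proves the $\mu=0$ case. The gap is in your final step. To convert the vertex sum into the $\mathfrak{h}_{\mu}$-weights you need, for each $\gamma\in\A^0(e)$, that the half-sum $\sum_{w\in V(\gamma)\cap V(\Delta^-(\vec{e}))}\mu/\phi(w)$ equal \emph{half} of the full summation identity $\frac{\mu}{\sigma(\gamma)-\mu}\sqrt{1-4/\phi(\gamma)^2}$. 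It does not. Writing $y_m=A\lambda^m+B\lambda^{-m}$ as in \S \ref{ss:growth}, with $e$ joining $v_0$ (tail) to $v_1$ (head) and the $\Delta^-$ side being $m\le 0$, the one-sided telescope gives $\sum_{k\le -1}1/(y_ky_{k+1})=1/\bigl(By_0(\lambda-\lambda^{-1})\bigr)$, whereas half the two-sided sum is $1/\bigl(2AB(\lambda-\lambda^{-1})\bigr)$; using $y_0\phi(\gamma)=x_i+x_i'$ the discrepancy evaluates to $\mu(x_i-x_i')\big/\bigl(2P(\sigma(\gamma)-\mu)\bigr)$, where $i$ is the colour of $e$, $x_i,x_i'$ are the values at the head and tail, and $P=\prod_{m\neq i}x_m$. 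Summed over the $n-1$ elements of $\A^0(e)$ this gives $\frac{\mu(x_i-x_i')}{2P}\sum_{j\neq i}(\sigma(\gamma_j)-\mu)^{-1}$, which does not vanish in general.

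This is not a repairable slip in your write-up but an inconsistency in the target identity itself when $\mu\neq 0$. Indeed, take $\phi\in\B_{\mu}$ and a sink $v$ of $\vec{\Delta}_{\phi}$ at which all $n$ incident edges are decisively directed towards $v$ (e.g.\ the root for $(a,\dots,a)$ with real $a>2$). Each $\vec{e}_k=\vec{E}_{\phi}(e_k)$ satisfies the hypotheses of the proposition; every $\gamma$ through $v$ lies in exactly two of the sets $\A^0(e_k)$, and every other $\gamma$ lies in exactly one $\A^-(\vec{e}_k)$. Summing the claimed identity (\ref{eq:edgeidentity}) over $k$ therefore yields $\sum_k\psi(\vec{e}_k)=\sum_{\gamma\in\A}\mathfrak{h}_{\mu}(\gamma)=1$ by Theorem \ref{thm:identity}, while the vertex relation gives $\sum_k\psi(\vec{e}_k)=1+\mu/\phi(v)$; so (\ref{eq:edgeidentity}) cannot hold at all $n$ edges of a sink unless $\mu=0$. (A direct check with $n=3$, $\mathbfit{a}=(4,4,4)$, $\mu=-16$ gives a deficit of exactly $1/12$ per edge, i.e.\ $\mu/(3\phi(v_0))$.) The correct relative identity for $\mu\neq 0$ is your displayed formula with the explicit term $\sum_{w\in V(\Delta^-(\vec{e}))}\mu/\phi(w)$ retained, or equivalently the $\mathfrak{h}_{\mu}$-version with the boundary correction $\frac{2}{n(n-1)}\cdot\frac{\mu(x_i-x_i')}{2P}\sum_{j\neq i}(\sigma(\gamma_j)-\mu)^{-1}$ subtracted; your proof of that statement is complete, and you were right to flag the apportionment of the vertices near $e$ as the step demanding the most care --- it is exactly where the stated identity breaks.
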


 \begin{rmk}
 With the above proposition and following the computations in the proof of Proposition 2.9 of \cite{tan-wong-zhang2006gd}, we can show that \eqref{eq:edgeidentity} still holds when $\phi(\gamma) = \pm 2$ for a finite number of $\gamma \in \A^{0-}(\vec{e})$, thus the identity \eqref{eqn:omegaid} in Theorem \ref{thm:identity} holds as well in such conditions.
 \end{rmk}

 \section{Further directions}\label{s:conclusion}
 There are many interesting questions and directions to be explored. We list some below:
 \begin{enumerate}
\item Foremost perhaps is the question of whether the sets $\D$ defined, and the identities derived here have interesting geometric interpretations in terms for example of hyperbolic surfaces or other geometric objects. As noted in the introduction, the case $n=3$ is well studied and corresponds to the character variety of $F_2$, and the original McShane identity, the case $n=4$ also has the interesting interpretation in terms of the Fricke space of the thrice-punctured projective plane from the work of Huang-Norbury.

\item The geometry of the regular subtrees $T^{|k|}(\Delta)$ for $1\le k<n-1$ and the extended Hurwitz map on these sets is an interesting topic. It would be interesting to see if the geometric picture of the Siepinski simplices we discussed for $\A=T^{|2|}(\Delta)$ generalizes to these objects, and if the extended Hurwitz maps have interesting properties which characterize them, for example, edge relations etc.

\item In this paper, we described a non-empty open subset $\D$ of $\CC^n$ on which $\Gamma_n$ acts properly discontinuously. A natural question is if this is the largest open subset with this property. Other basic properties of this set are also unknown, for example if this set is connected, if the boundary is always fractal-like, etc. Restriction of the action to  $\RR^n$ or other subfields is also interesting. 	One could also look at the subset of Hurwitz maps which are fixed by some element of $\Gamma_n$, that is, for which $\phi_{\mathbfit{a}}=\phi_{g(\mathbfit{a})}$ for some $g\in \Gamma_n$. In this case, the Bowditch conditions will not be satisfied but we can define relative Bowditch conditions similar to \cite{tan-wong-zhang2008advm}.

\item The dynamics of the action of $\Gamma_n$ on the complement of the closure of $\D$ seems quite mysterious and worth exploring. Of interest for example would be if there exists some easily defined subset on which the action of $\Gamma_n$ is ergodic.

\item Finally, the methods described in this paper are fairly general and it would be interesting to see if they can be applied to for example the study of Apollonian circle/sphere packings where a similar group action is observed.
\end{enumerate}


\vskip 18pt

\section*{Appendix: Polynomial automorphisms}\label{s:appendix}

\noindent Horowitz in \cite{horowitz1975tams} studied the group of polynomial automorphisms of $\mathbb{C}^3$ preserving the polynomial
\[
x^2+y^2+z^2-xyz.
\]
Here we extend the study and give the proof of Theorem \ref{thm:polyauto}.

\begin{proof}
It is clear that the elements of $\Gamma_n$,  $\Upsilon_n$ and $S_n$ are polynomial automorphisms of $\CC^n$ preserving the  Markoff-Hurwitz polynomial $H(\mathbfit{x})$, thus the group generated by these elements is a subgroup of $\Gamma_n^*$. To complete the proof, we need to show the other inclusion. Let
\[
x_1 \to R_1, x_2 \to R_2, \cdots, x_n \to R_n
\]
be an arbitrary element $\Theta \in \Gamma_n^{\ast}$ where $R_1, R_2, \cdots, R_n$ are polynomials in $x_1, x_2,\cdots$, $x_n$ of degree $r_1,r_2,\cdots,r_n$, respectively. Since $\Theta$ keeps invariant the polynomial $H(\mathbfit{x})$, we have
\begin{equation}
\label{eq:rightleftpoly}
R_1^2+R_2^2+\cdots +R_n^2-\prod_{i=1}^n R_i = x_1^2+x_2^2 + \cdots+x_n^2-\prod_{i=1}^n x_i.
\end{equation}
We observe that the degree of the right-hand side polynomial is $n$ and the degree of the left hand side polynomial is not great than $\max\{2r_1, 2r_2, \cdots, 2r_n, \sum_{i=1}^n r_i\}$. In the following, we follow the method in \cite{horowitz1975tams} and decompose polynomials into sums of homogeneous polynomials, that is,
\begin{gather*}
R_1=R_1^{(r_1)}+R_1^{(r_1-1)}+\cdots +R_1^{(0)},R_1^{(r_1)}\neq 0,R_1^{(0)} \in \mathbb{C},\\
R_2=R_2^{(r_2)}+R_2^{(r_2-1)}+\cdots +R_2^{(0)},R_2^{(r_2)}\neq 0,R_2^{(0)} \in \mathbb{C},\\
\cdots,\\
R_n=R_n^{(r_n)}+R_n^{(r_n-1)}+\cdots +R_n^{(0)},R_n^{(r_n)}\neq 0,R_n^{(0)} \in \mathbb{C},
\end{gather*}
where $R_j^{(k)}$ is the sum of the monomials of degree $k$ in $R_j$, that is, the homogeneous polynomial of degree $k$ in $R_j$.

\vspace{10pt}

We prove by induction on the highest degree of $\Theta$. Note that by applying permutation automorphisms, we can always assume $r_1 \le r_2 \le \cdots \le r_n$. Furthermore, we know that if one of the polynomials $R_1, R_2, \cdots, R_n$ is of degree zero, i.e just a constant, then $\Theta$ is not an automorphism of $\mathbb{C}^n$. Thus we have $r_i \ge 1$ for all $i$.

\vspace{5pt}

First suppose $r_1=r_2=\cdots=r_n=1$, i.e. $\Theta$ is a linear automorphism. Then \eqref{eq:rightleftpoly} implies $\prod_{i=1}^n R_i^{(1)}=\prod_{i=1}^n x_i$ by comparing the highest terms on both sides. Thus by unique factorization, the polynomials $R_1^{(1)},R_2^{(1)},\cdots, R_n^{(1)}$ are $c_1 x_1, c_2 x_2, \cdots, c_n x_n$ with $c_i \in \mathbb{C}\backslash \{0\}$ and $\prod_{i=1}^n c_i = 1$.  By composing with a permutation, we can assume that
\[
R_1^{(1)}=c_1 x_1, R_2^{(1)}=c_2 x_2, \cdots, R_n^{(1)}=c_n x_n.
\]
Comparing the degree $(n-1)$-terms of \eqref{eq:rightleftpoly} implies $R_1^{(0)} = R_2^{(0)}= \cdots =R_n^{(0)}=0$. Then $c_i =\pm 1$ and $\prod_{i=1}^n c_i = 1$, that is the automorphism is a sign-change automorphism. Therefore we have proved that the group $\Lambda$ of linear automorphisms of $\CC^n$ preserving $H(\mathbfit{x})$ is generated by elements of $\Upsilon_n$ and $S_n$. It is easy to check that $\Upsilon_n \lhd \Lambda$ and $\Upsilon_n \bigcap S_n$ is trivial, so $\Lambda = \Upsilon_n \rtimes S_n$. It is also clearly finite.

\vspace{5pt}

Now we may assume $r_n >1$. Here we claim that $r_n=\sum_{i=1}^{n-1} r_i$.  Otherwise, if $r_n > \sum_{i=1}^{n-1} r_i \ge n-1$, then $(R_n^{(r_n)})^2$ is the only non zero homogeneous polynomial of degree $2 r_n > n$ in the left-hand side of \eqref{eq:rightleftpoly}, which implies that \eqref{eq:rightleftpoly} cannot be true; or else, if $r_n < \sum_{i=1}^{n-1} r_i $, then $\prod_{i=1}^n R_i^{(r_i)}$ is the only non zero homogeneous polynomial of degree $\sum_{i=1}^{n} r_i > n$ in the left-hand side of \eqref{eq:rightleftpoly}, which also implies that \eqref{eq:rightleftpoly} cannot be true either.

\vspace{5pt}

Therefore, $r_n=\sum_{i=1}^{n-1} r_i$ and the terms of degree greater than $n$ in the left-hand side of \eqref{eq:rightleftpoly} must  cancel. The sum of terms of highest degree on the left-hand side of \eqref{eq:rightleftpoly} is $(R_n^{(r_n)})^2-\prod_{i=1}^n R_i^{(r_i)}$, thus $R_n^{(r_n)}=\prod_{i=1}^{n-1} R_i^{(r_i)}$. Now consider the following polynomial automorphism of $\mathbb{C}^n$
\[
x_1 \to R_1'=R_1, x_2 \to R_2'= R_2, \cdots, x_{n-1}\to R_{n-1}'= R_{n-1}, x_n \to R_n'= \prod_{i=1}^{n-1} R_i -R_{n},
\]
which is the composition  $b_n\circ \Theta$. We can check that the maximal degree of the above polynomials $\{R_i'\}_{i=1}^n$ is strictly less than $r_n$. Therefore by induction, $b_n\circ \Theta$, and hence $\Theta$ are both  generated by elements in $\Gamma_n$, $\Upsilon_n$ and $S_n$.

\vspace{5pt}

It is straightforward to show that $\Gamma_n \lhd \Gamma_n^{\ast}$ and $\Gamma_n \bigcap \Lambda$ is trivial, hence $\Gamma_n^{\ast} = \Gamma_n \rtimes \Lambda$, which completes the proof.
\end{proof}

\end{document}